\theoremstyle{plain}
\newtheorem{thm}{Theorem}[section]
\newtheorem{lem}[thm]{Lemma}
\newtheorem{cor}[thm]{Corollary}
\newtheorem{dfn}[thm]{Definition}
\newtheorem{ex}[thm]{Example}
\newtheorem{rem}[thm]{Remark}
\newcommand{\eps}{\ensuremath{\varepsilon}}
\newcommand{\EC}{\ensuremath{\mathbf{EC}}\xspace}
\newcommand{\PC}{\ensuremath{\mathbf{PC}}\xspace}
\newcommand{\EQ}{\ensuremath{\mathbf{EQ}}\xspace}
\newcommand{\ECe}{\ensuremath{\EC_\eps}\xspace}
\newcommand{\PCe}{\ensuremath{\PC_\eps}\xspace}
\newcommand{\ECEq}{\ensuremath{\EC^=}\xspace}
\newcommand{\PCEq}{\ensuremath{\PC^=}\xspace}
\newcommand{\ECeEq}{\ensuremath{\ECe^=}\xspace}
\newcommand{\ECeEqOne}{\ensuremath{\ECe^{=_1}}\xspace}
\newcommand{\PCeEq}{\ensuremath{\PCe^=}\xspace}
\newcommand{\lamI}{\ensuremath{\mathbf{\lambda I}}\xspace}
\newcommand{\lamIstar}{\ensuremath{\mathbf{\lambda I}^*}\xspace}
\newcommand{\alllamI}{\ensuremath{\forall\lamI}\xspace}
\newcommand{\alllamIplus}{\ensuremath{\forall\lamI^+}\xspace}
\newcommand{\allelim}{\ensuremath{\forall^-}\xspace}
\newcommand{\allintro}{\ensuremath{\forall^+}\xspace}
\newcommand{\exintro}{\ensuremath{\exists^+}\xspace}
\newcommand{\exelim}{\ensuremath{\exists^-}\xspace}
\newcommand\norm[1]{\left\lVert#1\right\rVert}
\newcommand{\BVar}{\ensuremath{\mathcal{BV}}\xspace}
\newcommand{\OVar}{\ensuremath{\mathcal{FV}}\xspace}
\newcommand{\SVar}{\ensuremath{\mathcal{BV}}\xspace}
\newcommand{\Fun}{\ensuremath{\mathcal{F}}\xspace}
\newcommand{\Pred}{\ensuremath{\mathcal{P}}\xspace}
\newcommand{\Closure}{\ensuremath{\mathcal{CL}}\xspace}
\newcommand{\NQVar}{\ensuremath{\mathrm{FV}}\xspace}
\newcommand{\dgr}{\ensuremath{\mathsf{deg}}\xspace}
\newcommand{\pdgr}{\ensuremath{\mathsf{pd}}\xspace}
\newcommand{\mpdgr}{\ensuremath{\mathsf{mpd}}\xspace}
\newcommand{\arity}{\ensuremath{\mathsf{a}}\xspace}
\newcommand{\marity}{\ensuremath{\mathsf{ma}}\xspace}
\newcommand{\lth}{\ensuremath{\mathsf{len}}\xspace}
\newcommand{\cc}{\ensuremath{\mathsf{cc}}\xspace}
\newcommand{\cce}{\ensuremath{\cc^\eps}\xspace}
\DeclareMathOperator{\depth}{\ensuremath{\mathsf{dp}}\xspace}
\DeclareMathOperator{\frmcomp}{\ensuremath{\mathsf{comp}}\xspace}
\newcommand{\ccEq}{\ensuremath{\cc^=}\xspace}
\newcommand{\rk}{\ensuremath{\mathsf{rk}}\xspace}
\newcommand{\crs}{\ensuremath{\mathcal{CR}}\xspace}
\newcommand{\width}{\ensuremath{\mathsf{wd}}\xspace}
\newcommand{\mwidth}{\ensuremath{\mathsf{mwd}}\xspace}
\newcommand{\widtheq}{\ensuremath{\width^=}\xspace}
\newcommand{\widtheps}{\ensuremath{\width^\eps}\xspace}
\newcommand{\order}{\ensuremath{\mathsf{o}}\xspace}
\newcommand{\matrices}{\ensuremath{\mathsf{m}}\xspace}
\newcommand{\HC}{\ensuremath{\mathrm{HC}}\xspace}
\newcommand{\bicon}{\ensuremath{\mathrm{\circ}}\xspace}
\newcommand{\quant}{\ensuremath{\mathsf{Q}}\xspace}
\newcommand{\aeq}{\ensuremath{\equiv_\alpha}\xspace}
\newcommand*{\impl}{\mathrel{\rightarrow}}
\newcommand*{\tpkt}{\rlap{$\;$.}}
\newcommand*{\tkom}{\rlap{$\;$,}}
\newcommand*{\defsym}{\mathrel{:=}}
\title{The Epsilon Calculus with Equality and Herbrand Complexity}
\author{Kenji Miyamoto and Georg Moser\\\texttt{Kenji.Miyamoto@uibk.ac.at}, \texttt{Georg.Moser@uibk.ac.at}\\
University of Innsbruck}
\date{\today}
\begin{document}
\maketitle
\begin{abstract}
Hilbert's epsilon calculus is an extension of elementary or predicate
calculus by a term-forming operator \(\varepsilon\) and initial
formulas involving such terms.  The fundamental results about the
epsilon calculus are so-called epsilon theorems, which have been
proven by means of the epsilon elimination method.  It is a procedure
of transforming a proof in epsilon calculus into a proof in elementary
or predicate calculus through getting rid of those initial formulas.
One remarkable consequence is a proof of Herbrand's theorem due to
Bernays and Hilbert which comes as a corollary of extended first
epsilon theorem.  The contribution of this paper is the upper and
lower bounds analysis of the length of Herbrand disjunctions in
extended first epsilon theorem for epsilon calculus with equality.  We
also show that the complexity analysis for Herbrand's theorem with
equality is a straightforward consequence of the one for extended
first epsilon theorem without equality due to Moser and Zach.
\end{abstract}
\smallskip
\noindent \textit{Keywords.}  Hilbert's epsilon calculus, epsilon
theorems, Herbrand complexity, proof complexity

\section{Introduction}
\label{s:intro}
Hilbert's epsilon calculus is an extension of predicate calculus by
the \eps-operator which forms for a formula \(A(x)\) a term \(\eps_x
A(x)\).  This operator is governed by the following two initial formulas:
One is the critical formula
\begin{align*}
A(t) \to A(\eps_x A(x))
\end{align*}
where \(t\) is an arbitrary term, and the other is the \eps-equality
formula
\begin{align*}
\vec{u} = \vec{v} \to \eps_x B(x, \vec{u}) = \eps_x B(x, \vec{v})
\end{align*}
where \(\vec{u}\) and \(\vec{v}\) are sequences of terms \(u_0,
u_1, \ldots, u_{n-1}\) and \(v_0, v_1, \ldots, v_{n-1}\) and \(\vec{u}
= \vec{v}\) stands for the conjunction of \(u_0 = v_0\), \(u_1 =
v_1\), \(\ldots\), and \(u_{n-1} = v_{n-1}\) for an arbitrary positive
natural number \(n\), and the proper subterms of \(\eps_x B(x,
\vec{a})\) are only variables \(\vec{a}\).  Pure epsilon calculus is
an extension of elementary calculus by the \eps-operator and the
critical formula.  The \eps-operator is expressive enough to encode
the existential and universal quantifiers, so that they are definable
as \(\exists x A(x) := A(\eps_x A(x))\) and \(\forall x A(x) :=
A(\eps_x \neg A(x))\) within the epsilon calculus.

The epsilon calculus was originally developed in the context of
Hilbert's program.  Early work in proof theory (before Gentzen)
concentrated on the epsilon calculus, the \eps-elimination method, and
the \eps-substitution method, and those results were carried out by
Bernays~\cite{HilbertBernays39} (see
also~\cite{Zach:2002,Zach:2004,MoserZach06}),
Ackermann~\cite{Ack25,Ack40} (see also~\cite{Moser:2006apal}), and von
Neumann~\cite{Neu27}.  The correct proof of Herbrand's theorem was
first given by means of epsilon calculus~\cite{Buss94}.  The theorem
is commonly stated in a less general way as follows than the original:
If there is a proof of a prenex existential formula \(\exists
{\vec{x}} A(\vec{x})\) for quantifier-free \(A(x)\) in predicate
calculus, there is a proof of \(A(\vec{t}_0) \lor A(\vec{t}_1) \lor
\ldots \lor A(\vec{t}_{k-1})\) for some terms \(\vec{t}_0, \vec{t}_1,
\ldots, \vec{t}_{k-1}\) in elementary calculus.  The epsilon calculus
is of independent and lasting interest, however, and a study from a
computational and proof-theoretic point of view is particularly
worthwhile.

In the course of proving epsilon theorems and Herbrand's theorem, the
\emph{\eps-elimination method} is used to proof-theoretically
transform a proof in epsilon calculus into a proof which is free from
the above mentioned initial formulas.  Assume there is a proof of
\(A(\vec{t}\,)\) in pure epsilon calculus, where \(\vec{t}\) is a
finite sequence of terms possibly with occurrences of \eps-terms, then
the \eps-elimination method generates another proof of the disjunction
\(A(\vec{s}_0) \lor A(\vec{s}_1) \lor \ldots \lor A(\vec{s}_{k-1})\)
in elementary calculus, where \(\vec{s}_0, \vec{s}_1, \ldots,
\vec{s}_{k-1}\) are terms without the \eps-operator.  The disjunction
is a so-called \emph{Herbrand disjunction} for the formula
\(A(\vec{t}\,)\), and the aim of this paper is analyses of the
\emph{Herbrand complexity} which is the length \(k\) of the shortest
Herbrand disjunction for the original formula.  This paper extends the
Herbrand complexity analysis by Moser and Zach~\cite{MoserZach06}.
Their result tells us that the Herbrand Complexity of a formula \(A\)
is based on the proof measure speaking only about the first-order
counterpart of a proof of \(A\).  While they have dealt with the
systems of epsilon calculus without the \eps-equality formula, we
target epsilon calculus with the \eps-equality formula and study the
upper and lower bounds analysis of the Herbrand complexity for the
system with the \eps-equality formula.  Our contribution is divided
into two parts.  The first one is a complexity analysis for Herbrand's
theorem in first-order logic with equality.  In this case, we can
avoid to rely on epsilon calculus with the \eps-equality formula,
hence the result by Moser and Zach is directly applicable.  The second
one is the upper and lower bounds analyses for extended first epsilon
theorem with the \eps-equality formula, where the upper bound analysis
depends on a measure concerning the structure of critical formulas as
well as the measure for first-order ingredients of a proof.

Hilbert's epsilon calculus is primarily a classical formalism, and we
will restrict our attention to classical first-order logic.  For
non-classical approaches to epsilon calculus, see the work of
Bell~\cite{Bell:93a,Bell:93b}, DeVidi~\cite{DeVidi:95},
Fitting~\cite{Fitting:75}, Mostowski~\cite{Mostowski63}, and
Shirai~\cite{Shirai71}.  Our study is also motivated by the recent
renewed interest in the epsilon calculus and the \eps-substitution
method in, e.g., the work of Arai~\cite{Arai01b,Arai:2005},
Avigad~\cite{Avigad01}, Baaz et al.~\cite{BaazLeitschLolic18}, and
Mints et al., \cite{Mints:99,Mints:2003}.  The epsilon calculus also
allows the incorporation of choice construction into
logic~\cite{BlassGurevich:2000:JSL}.  The treatment of eigenvariables
in the context of unsound proofs and its relation to the
epsilon calculus is studied by Aguilera and
Baaz~\cite{AguileraBaaz16}.  On the semantics of epsilon calculus,
see the work of Zach~\cite{Zach17}.

The rest of this paper is organized in the following way.
Section~\ref{s:epsilon-calculus} describes the syntax of epsilon
calculus without the \eps-equality formula, Section~\ref{s:embedding}
shows the embedding lemma which states that predicate calculus is a
subset of pure epsilon calculus without the \eps-equality formula.  A
complexity analyses of Herbrand's theorem for a prenex existential
formula comes as a simple consequence of the lemma.  In
Section~\ref{s:epsilon-calculus-with-equality}, the system is extended
by the \eps-equality formula, which makes the identity schema true
within the system.  Section 5 clarifies the subtlety of complexity
analyses of a system with equality through Yukami's
trick~\cite{Yukami84}.  In
Section~\ref{s:first-and-second-epsilon-theorems} we review first and
second epsilon theorems following the proof by Bernays.
Section~\ref{s:extended-first-epsilon-theorem} and
Section~\ref{s:lower-bounds} are devoted to analysing the upper and
the lower bounds, respectively, where
Section~\ref{s:extended-first-epsilon-theorem} describes our
complexity analysis for extended first epsilon theorem and the upper
bound of the Herbrand complexity.  Section~\ref{s:concl} concludes
this paper.


\section{Epsilon Calculus}
\label{s:epsilon-calculus}
We start from defining terms and formulas of our logic.  As a
convention we assume $x, y, z$ range over a set \SVar of bound
variables, $a, b, c$ over a set \OVar of free variables, $f$ over a
set \Fun of function symbols, and $P, Q, R$ over a set \Pred of
predicate symbols.  The symbol $=$ is reserved for the equality
predicate.  Each function symbol and predicate symbol has an arity,
and \SVar, \OVar, \Fun, \(\{=\}\), and \Pred are disjoint.  We
abbreviate \(t_0, t_1, \ldots, t_{k-1}\) as \(\vec{t}\) and let
\(|\vec{t}\,|\) denote its length \(k\).  We define terms, formulas,
and free variable occurrences.  Notice the difference between free
variables \(\OVar\) and free variable occurrences.
\begin{dfn}[Term and formula]
  \label{d:term-and-formula}
  \emph{Raw terms} $t$ and \emph{raw formulas} $A$, $B$ are
  simultaneously defined as follows.
  \begin{align*}
    & t ::= x \mid a \mid f\vec{t} \mid \eps_x A \\
    & A, B ::= P\vec{t} \mid t=t' \mid \neg A \mid A \to B \mid A \land B
    \mid A \lor B \mid \exists x A \mid \forall x A
  \end{align*}
  Sets of \emph{free variable occurrences} \(\NQVar(t)\) and
  \(\NQVar(A)\) are simultaneously defined, assuming \(\bicon \in
  \{\to, \land, \lor\}\), \(\quant \in \{\exists, \forall
  \}\), and \(z \in \OVar \cup \BVar\).
  \begin{align*}
    & \NQVar(z) := \{z\}, ~\NQVar(f\vec{t}\,) := \NQVar(P\vec{t}\,) := \textstyle \bigcup_{i<|\vec{t}\,|}\NQVar(t_i), ~\NQVar(\neg A) := \NQVar(A),\\
    & \NQVar(\quant x A) := \NQVar(\eps_x A) :=\NQVar(A) \setminus \{x\}, ~ \NQVar(A \bicon B) := \NQVar(A) \cup \NQVar(B).
  \end{align*}
  A raw term \(t\) is a \emph{semiterm} if \(\NQVar(t) \cap
  \BVar \neq \emptyset\), and \(t\) is a \emph{term} if \(\NQVar(t) \cap
  \BVar = \emptyset\).  
  A raw formula \(A\) is a \emph{semiformula} if \(\NQVar(A)
  \cap \BVar \neq \emptyset\), and \(A\) is a \emph{formula} if \(\NQVar(A)
  \cap \BVar = \emptyset\).  
  A (semi)formula and a (semi)term are \emph{quantifier free}
  in case neither \(\forall, \exists\), nor \(\eps\) occurs in them.
\end{dfn}
We abbreviate \(A_0 \wedge A_1 \wedge \ldots \wedge A_n\) as
\(\bigwedge_{i=0}^n A_i\) and also as \(\bigwedge_{i<n+1}A_i\), and
the same convention applies to \(\bigvee\).  Terms of the form
\(\eps_x A\) is called \emph{\eps-terms}.
\begin{dfn}[Substitution]
Assume \(\bicon \in \{\to, \land, \lor\}\) and \(\quant \in \{\exists,
\forall\}\).  For (semi)terms $s, t$, (semi)formulas $A, B$, and
variables \(w,z \in \OVar \cup \BVar\), the substitution \(t\{z/s\}\)
is defined as follows.
\begin{align*}
  & w\{z/s\} := s \quad \text{if \(w \equiv z\)}, \qquad w\{z/s\} := w \quad \text{if \(w \not\equiv z\)},\\
  & (f\vec{t}\,)\{z/s\} := f(\vec{t}\{z/s\}), \qquad (P\vec{t}\,)\{z/s\} := P (\vec{t}\{z/s\}), \\
  & (\neg A) \{z/s\} := \neg (A \{z/s\}), \quad (A \bicon B)\{z/s\} := A \{z/s\} \bicon B\{z/s\},\\
  & (\quant x A)\{z/s\} := \quant x A \text{ and } (\eps_x A)\{z/s\} := \eps_x A \quad \text{if \(x \equiv z\)}, \\
  & (\quant x A)\{z/s\} := \quant {x'} A\{x/x'\}\{z/s\} \text{ and } (\eps_x A)\{z/s\} := \eps_{x'} (A\{x/x'\}\{z/s\}) \quad \text{o.w.},
\end{align*}
where \(\vec{t}\{w/s\} := t_0\{w/s\}, t_1\{w/s\}, \ldots,
t_{n-1}\{w/s\}\) and \(x'\) is fresh.
\end{dfn}
We can write \(A(a)\) for a formula \(A\) with a free variable \(a \in
\NQVar(A)\), and then \(A\{a/t\}\) is abbreviated as \(A(t)\).  This
notation is extended through the vector notation and the simultaneous
substitution.  We employ the same for terms.
\begin{dfn}[$\alpha$-equivalence]
  We define the $\alpha$-equivalence for (semi)terms and
  (semi)formulas as follows.
\begin{align*}
  & x \aeq x, \quad a \aeq a, \quad f\vec{s} \aeq f\vec{t} := \textstyle \bigwedge_{i<|\vec{s}|} s_i\aeq t_i, \\
  & P\vec{s} \aeq P\vec{t} := \textstyle \bigwedge_{i<|\vec{s}|} s_i\aeq t_i, \quad \neg A \aeq \neg B := A \aeq B,\\
  & A \bicon B \aeq A' \bicon B' :=  A \aeq A' \text{ and } B \aeq B'
    \text{ for \(\bicon \in \{\to, \land, \lor\}\)},\\
    & \quant x A(x) \aeq \quant y B(y) := \eps_x A(x) \aeq \eps_y B(y) := A(z) \aeq B(z) \text{ for a fresh \(z\)}.
\end{align*}
\end{dfn}
We also define the term substitution \(t\{s/u\}\) for (semi)terms
\(t,s,u\) through the \(\alpha\)-equivalence instead of the equality
on variables, and the simultaneous substitution.
\begin{dfn}[Set induced by vector]\label{d:vector-set}
For any vector \(\vec{t}\), a set \(\{\vec{t}\,\}\) is defined to be
\(\bigcup_{i<|\vec{t}\,|}\{t_i\}\) via \(\aeq\).  We say a list of
vectors \(\vec{t}_0, \ldots, \vec{t}_{k-1}\) is a split of \(\vec{t}\)
if \(\{\vec{t}_0\} \uplus \cdots \uplus \{\vec{t}_{k-1}\} =
\{\vec{t}\,\}\) and \(\{\vec{t}_i\} \neq \emptyset\) for \(0 \le i < k\).
\end{dfn}
\begin{dfn}[Equality]
  The following formulas are referred to by \EQ.
  \begin{align*}
    & t = t, && s = t \to t=s, && s = t \to t=u \to s=u, \\
    & \vec{s}=\vec{t} \to P\vec{s} \to P\vec{t}, && \vec{s}=\vec{t} \to f\vec{s} = f\vec{t}.
  \end{align*}
\end{dfn}

\begin{dfn}[Elementary calculus and predicate calculus]
  The system of elementary calculus is denoted by \EC, where its
  initial formulas are propositional tautologies and its inference
  rule is modus ponens given as follows.
  \begin{align*}
    \frac{\Gamma \vdash A \qquad \Gamma \vdash A \to B}{\Gamma \vdash B}
  \end{align*}
  The system of first-order predicate calculus is denoted by \PC,
  where the initial formulas are propositional tautologies and the
  following formulas \((\allelim)\) and \((\exintro)\).
  \begin{align}
    &
    \begin{aligned}
      \forall x A(x) \to A (t)
    \end{aligned}
    & \tag{\allelim}
    \\
    &
    \begin{aligned}
      A (t) \to \exists x A(x)
    \end{aligned}
    & \tag{\exintro}
  \end{align}
  The inference rules of \PC are modus ponens and the following
  \((\allintro)\) and \((\exelim)\), where the \emph{eigenvariable}
  \(a\) may not occur in any formula in the axiom \(\Gamma\).
  \begin{align*}
    \frac{\Gamma \vdash A \to B(a)}{\Gamma \vdash A \to \forall
      x B(x)}(\allintro) \qquad \frac{\Gamma \vdash A(a) \to B}{\Gamma \vdash
      \exists x A(x) \to B}(\exelim)
  \end{align*}
\EC and \PC extended by the initial formulas \EQ are called \(\EC +
\EQ\) and \(\PC + \EQ\), respectively.  We alternatively say \ECEq and
\PCEq for them.
\end{dfn}

\begin{dfn}[Epsilon calculus]
  Let a formula of the form
  \begin{align*}
    & A(t) \to A(\eps_x A (x)),
  \end{align*}
  where \(t\) is an arbitrary term and \(A(a)\) is a formula
  containing \(a\), be a \emph{critical formula}, and we define the
  systems \ECe and \PCe by extending \EC and \PC by taking such
  critical formulas as initial formulas.  We say \(\eps_x A (x)\) is
  the \emph{critical \eps-term} of the critical formula and the
  critical formula belongs to \(\eps_x A (x)\).
\end{dfn}
\begin{dfn}[Proof]
  Let \(\mathbf{S}\) be a system which consists of initial
    formulas and inference rules, and assume a set \(\Gamma\) of
    formulas which we call \emph{axioms}.
    A list of formulas is a \emph{proof}
  in \(\mathbf{S}\) from \(\Gamma\), if each formula is an initial
  formula of \(\mathbf{S}\), a formula in \(\Gamma\), or a consequence
  of an inference rule of \(\mathbf{S}\) referring to preceding
  formulas in the proof.  We write \(\mathbf{S}, \Gamma \vdash_\pi A\)
  if and only if a formula \(A\) is the last formula of the proof
  \(\pi\) in system \(\mathbf{S}\) from \(\Gamma\).  We omit
  \(\Gamma\) if it is empty and \(\mathbf{S}\) if there is no
  confusion.  An inference rule consists of one consequence and
  assumptions, and may be displayed using a horizontal line.
\end{dfn}

\begin{dfn}[Languages]
Let the language \(L(\PCeEq)\) be formulas and terms in
Definition~\ref{d:term-and-formula} and the language \(L(\ECeEq)\) be
\(L(\PCeEq)\) without the universal and existential quantifiers.  We
denote by \(L(\PCEq)\) and \(L(\ECEq)\) the sublanguages of \(L(\PCeEq)\)
and \(L(\ECeEq)\) without the \eps-operator, respectively.
Also, \(L(\PCe)\) and \(L(\ECe)\) are the sublanguages of \(L(\PCeEq)\)
and \(L(\ECeEq)\) without the equality symbol, respectively.
Finally, \(L(\PC)\) and \(L(\EC)\) are the sublanguages of \(L(\PCEq)\)
and \(L(\ECEq)\) without the equality symbol, respectively.
\end{dfn}

We give two examples of \ECe-proofs.  These formulas in the examples
are meant to be \eps-calculus versions of the independence of premise
and the drinker's formula.  See also
Example~\ref{e:eps-translation-ip} and
Example~\ref{e:eps-translation-drinkers-formula}.
\begin{ex}
  \label{e:eps-ip-formula-proof}
  Consider the following formula in \(L(\ECe)\).
  \begin{align}
    \label{f:eps-ip-formula}
    (A \to B(\eps_x B(x))) \to A \to B(\eps_x (A \to B(x))).
  \end{align}
  This formula (\ref{f:eps-ip-formula}) is an instance of the critical
  formula, hence a proof of (\ref{f:eps-ip-formula}) is given as follows.
  \begin{align*}
    & (A \to B(\eps_x B(x))) \to A \to B(\eps_x (A \to B(x))) && \text{critical formula}
  \end{align*}
\end{ex}
\begin{ex}
  \label{e:eps-drinkers-formula-proof}
  Consider the following formula in \(L(\ECe)\).
  \begin{align}
    \label{f:eps-drinkers-formula}
    A (\eps_x (A (x) \to A(\eps_y A(y)))) \to A(\eps_y A(y)).
  \end{align}
  An \ECe-proof of this formula (\ref{f:eps-drinkers-formula}) is given as follows.
  \begin{align*}
    & (A(\eps_y A(y))\to A(\eps_y A(y))) \to \\
    & \qquad\qquad A (\eps_x (A (x) \to A(\eps_y A(y)))) \to A(\eps_y A(y)) && \text{critical formula}\\
    & A(\eps_y A(y))\to A(\eps_y A(y)) && \text{propositional tautology} \\
    & A (\eps_x (A (x) \to A(\eps_y A(y)))) \to A(\eps_y A(y)) && \text{modus ponens}
  \end{align*}
\end{ex}

We conclude this section with the following basic results.
\begin{thm}[Deduction theorem] \label{t:deduction}
  Assume \(A\) is a closed formula.  \(\Gamma \vdash A \to B\) iff
  \(\Gamma, A \vdash B\) in \PCe and in \ECe.
\end{thm}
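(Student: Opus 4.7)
The plan is to prove both directions of the biconditional, the $(\Rightarrow)$ direction being essentially immediate and the $(\Leftarrow)$ direction by induction on the length of the proof of $B$ from $\Gamma \cup \{A\}$. For $(\Rightarrow)$, given $\Gamma \vdash A \to B$, I simply note that $A \in \Gamma \cup \{A\}$ is trivially derivable in one step from the extended axioms, so a single application of modus ponens yields $\Gamma, A \vdash B$. The closedness of $A$ is not needed here.

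For $(\Leftarrow)$, fix a proof $\pi$ witnessing $\Gamma, A \vdash_\pi B$ and induct on the number of lines in $\pi$. In the base cases $B$ is either an initial formula of the system (a propositional tautology, a critical formula, or in the \PCe case an instance of $(\allelim)$ or $(\exintro)$), a member of $\Gamma$, or the formula $A$ itself. In the first two cases I apply the propositional tautology $B \to (A \to B)$ together with modus ponens in order to obtain $\Gamma \vdash A \to B$; in the third, $A \to A$ is itself a propositional tautology.

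In the step case, the last rule of $\pi$ is either modus ponens or, when working in \PCe, one of $(\allintro)$ and $(\exelim)$. For modus ponens with premises $\Gamma, A \vdash C$ and $\Gamma, A \vdash C \to B$, the induction hypothesis gives $\Gamma \vdash A \to C$ and $\Gamma \vdash A \to (C \to B)$, from which I derive $\Gamma \vdash A \to B$ using the propositional tautology $(A \to (C \to B)) \to (A \to C) \to (A \to B)$ and two applications of modus ponens. For $(\allintro)$, suppose the last line is $C \to \forall x D(x)$, obtained from $C \to D(a)$ with eigenvariable $a$ not occurring in $\Gamma \cup \{A\}$. By induction $\Gamma \vdash A \to (C \to D(a))$; I propositionally rearrange this to $\Gamma \vdash (A \wedge C) \to D(a)$, apply $(\allintro)$ — whose eigenvariable condition on $\Gamma$ still holds — to obtain $\Gamma \vdash (A \wedge C) \to \forall x D(x)$, and propositionally rearrange back to $\Gamma \vdash A \to (C \to \forall x D(x))$. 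The case of $(\exelim)$ is entirely symmetric.

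The main subtlety, and the reason the hypothesis that $A$ is closed is needed, lies in the quantifier cases: when I apply $(\allintro)$ or $(\exelim)$ to a rearranged sequent involving $A$, I must know that the eigenvariable $a$ does not occur free in $A$. Since $A$ is closed, $\NQVar(A) \cap \OVar = \emptyset$, so this is automatic. Everything else is bookkeeping with propositional tautologies, and for the \ECe half of the statement the quantifier cases simply do not arise, so only modus ponens and the base cases need to be handled.
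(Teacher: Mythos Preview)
The paper states this theorem without proof, treating it as a standard result; your argument is the standard textbook proof and is correct.

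One minor correction to your closing commentary, though it does not affect the validity of the argument under the stated hypothesis. In the $(\Leftarrow)$ quantifier cases, the eigenvariable condition on the original application of $(\allintro)$ or $(\exelim)$ in $\pi$ already forces $a$ to avoid every formula in the axiom set $\Gamma \cup \{A\}$, so $a \notin \NQVar(A)$ holds automatically---closedness of $A$ is not what secures it. Where closedness genuinely does work (for \PCe) is rather in the $(\Rightarrow)$ direction you called immediate: the given proof of $A \to B$ from axioms $\Gamma$ is a valid proof from the enlarged axiom set $\Gamma \cup \{A\}$ only if every eigenvariable in it avoids $A$ as well, and closedness of $A$ guarantees this without the need to rename eigenvariables.
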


\begin{lem}[Identity schema]
  For any formula \(A(a)\) and terms \(s,t\) in \(L(\mathbf{S})\),
  \(\mathbf{S} \vdash s=t \to A(s) \to A(t)\) holds for
  \(\mathbf{S} \in \{\PCEq, \ECEq\}\).
\end{lem}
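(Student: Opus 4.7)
The plan is to prove the identity schema by induction on the complexity of the formula $A(a)$, with a preliminary lemma on terms. Since $L(\PCEq)$ and $L(\ECEq)$ contain no $\eps$-operators, the atomic cases are the standard first-order ones, and the congruence axioms in $\EQ$ provide exactly what is needed.

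First I would establish the term version: for every term $r(a)$ in $L(\mathbf{S})$, $\mathbf{S} \vdash s=t \to r(s) = r(t)$. This goes by induction on the structure of $r$. If $r \equiv a$, the conclusion is $s=t$ itself. If $r$ is a different variable or a constant, we have $r=r$ by reflexivity and the conclusion follows propositionally. If $r \equiv f\vec{r}$, then by the induction hypothesis I obtain $s=t \to r_i(s) = r_i(t)$ for each $i<|\vec{r}\,|$, combine them propositionally to get $s=t \to \bigwedge_i r_i(s) = r_i(t)$, and apply the function congruence axiom $\vec{r}(s) = \vec{r}(t) \to f\vec{r}(s) = f\vec{r}(t)$.

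Next I would run the main induction on $A(a)$. For atomic $A(a) \equiv P\vec{r}(a)$, I combine the term lemma applied componentwise with the predicate congruence axiom $\vec{r}(s) = \vec{r}(t) \to P\vec{r}(s) \to P\vec{r}(t)$. For atomic $A(a) \equiv u(a) = v(a)$, I use the term lemma to derive $s=t \to u(s)=u(t)$ and $s=t \to v(s)=v(t)$, then combine these with symmetry and two uses of transitivity to obtain $s=t \to u(s)=v(s) \to u(t)=v(t)$. For connectives $\to, \land, \lor$, the step is pure propositional reasoning from the two induction hypotheses on the immediate subformulas. For $\neg B$, I additionally invoke the induction hypothesis in the reverse direction, namely on the premise $t=s$, and combine with the symmetry axiom $s=t \to t=s$ to obtain $s=t \to B(t) \to B(s)$, from which $s=t \to \neg B(s) \to \neg B(t)$ follows by contraposition. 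For the quantifier cases, relevant only in $\PCEq$, take $A(a) \equiv \forall x B(x,a)$: pick a fresh variable $c$, apply the induction hypothesis to $B(c,a)$ to get $s=t \to B(c,s) \to B(c,t)$, combine with the instance $\forall x B(x,s) \to B(c,s)$ of $(\allelim)$, and then apply $(\allintro)$ on the fresh eigenvariable $c$ to conclude $s=t \to \forall x B(x,s) \to \forall x B(x,t)$. The existential case is dual, using $(\exintro)$ and $(\exelim)$.

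The routine points are the propositional manipulations and the bookkeeping of substitutions; the only delicate steps are the $\neg$-case, where symmetry of equality must be invoked so that the induction hypothesis applies in both directions, and the quantifier cases, where I have to choose $c$ fresh relative to the ambient axioms $\Gamma$, $s$, and $t$ so that the eigenvariable side condition on $(\allintro)$ and $(\exelim)$ is satisfied. Since the language excludes $\eps$-terms, no issues about $\eps$-bound variables arise.
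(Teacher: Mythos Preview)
Your proposal is correct and follows exactly the approach the paper indicates: the paper's proof reads in its entirety ``By induction on the size of $A(a)$,'' and you have simply unfolded that induction with the expected term sublemma and case analysis. Your treatment of the $\neg$-case (invoking symmetry so the induction hypothesis can be used with $s,t$ swapped) and the eigenvariable freshness in the quantifier cases are the only non-routine points, and you handle both correctly.
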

\begin{proof}
  By induction on the size of \(A(a)\).
\end{proof}

Note that the above identity schema is not available in \PCEq and
\ECEq, if the language is extended to \(L(\PCeEq)\) and \(L(\ECeEq)\),
respectively.  In Section~\ref{s:epsilon-calculus-with-equality}, we
deal with epsilon calculus with the \eps-equality formula, within
which the identity schema is recovered for \(L(\PCeEq)\) and
\(L(\ECeEq)\).

\section{Embedding Lemma}
\label{s:embedding}

Hilbert introduced the epsilon operator to encode quantifiers, so that
predicate calculus goes to elementary calculus extended with the
critical formula.  This section describes this encoding of \PC within
\ECe.  The idea is to define the quantifiers by \eps-operator as follows,
and recursively apply them.
\begin{align*}
  \exists x A(x) := A(\eps_x A(x)), \qquad \forall x A(x) := A(\eps_x \neg A(x)).
\end{align*}

\begin{dfn}[\eps-translation]
  For a (semi)term \(t\) and a (semi)formula \(A\) we define its \eps-translation
  \(t^\eps\) and \(A^\eps\).  Let \(\vec{t^\eps}\) stand for
  \(t_0^\eps, \ldots, t_{|\vec{t}\,|-1}^\eps\).
  \begin{align*}
    & x^\eps := x, \quad a^\eps := a, \quad (f\vec{t}\,)^\eps := f\vec{t^\eps}, \quad (\eps_x A)^\eps := \eps_x A^\eps, \quad (P\vec{t}\,)^\eps := P\vec{t^\eps}, \\
    & (A \to B)^\eps := A^\eps \to B^\eps, \quad (A \land B)^\eps := A^\eps \land B^\eps, \quad (A \lor B)^\eps := A^\eps \lor B^\eps,\\
    & (\neg A)^\eps := \neg A^\eps, \quad (\exists x A(x))^\eps := A^\eps(\eps_x A^\eps(x)), \quad (\forall x A(x))^\eps := A^\eps(\eps_x \neg A^\eps(x)).
  \end{align*}
\end{dfn}

\begin{ex}
  \label{e:eps-translation-ip}
  Here is the formula of independence of premise in \(L(\PC)\),
  \begin{align*}
    (A \to \exists x B(x)) \to \exists x. A \to B(x),
  \end{align*}
  whose \eps-translation is the formula (\ref{f:eps-ip-formula}) in Example~\ref{e:eps-ip-formula-proof}.
\end{ex}
\begin{ex}
  \label{e:eps-translation-drinkers-formula}
  Here is the drinker's formula in \(L(\PC)\),
  \begin{align*}
    \exists x A (x) \to \forall y A(y),
  \end{align*}
  whose \eps-translation is the formula (\ref{f:eps-drinkers-formula}) in Example~\ref{e:eps-drinkers-formula-proof}.
\end{ex}

\begin{rem}
  The above two examples also show that the \eps-translation of a
  formula which is not provable in intuitionistic logic can be
  provable in \ECe without using any classical propositional
  tautology.
\end{rem}

\begin{dfn}[Regular proof]
  A proof is \emph{regular} if each eigenvariable in the proof is used
  by at most one \allintro or \exelim.
\end{dfn}

\begin{dfn}[Proof size]
   The size \(|\pi|\) of a proof \(\pi\) is the length of the list.
\end{dfn}

If there is a proof, a regular one is always available and whose size
is polynomially bounded to the original non-regular proof.  This fact comes by the following theorem due to Kraj\'i\v{c}ek~\cite{Krajicek94}.

\begin{thm}
  Let \(\norm{\phi}^s\) and \(\norm{\phi}^t\) be the size of the
  smallest sequence-proof and tree-proof of a provable first-order
  formula \(\phi\) in the Hilbert style calculus, respectively.  Then
  there exists a polynomial \(p(x)\) such that \(\norm{\phi}^t \le
  p(\norm{\phi}^s)\) for every provable first-order formula \(\phi\).
\end{thm}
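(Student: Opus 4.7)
The plan is to convert a sequence proof into a tree proof via the \emph{cumulative-conjunction} construction. Given a sequence proof \(\phi_1, \ldots, \phi_n\) of \(\phi = \phi_n\) from a set \(\Gamma\) of axioms that witnesses \(\norm{\phi}^s\), I would construct tree proofs \(\pi_i\) of the cumulative conjunctions \(\Phi_i \defsym \bigwedge_{j=1}^{i} \phi_j\) by induction on \(i\).  Because every conjunct occurs in the given sequence proof, each \(\phi_j\) has size bounded by \(\norm{\phi}^s\), so \(|\Phi_i| = O(i \cdot \norm{\phi}^s) = O((\norm{\phi}^s)^2)\), providing a polynomial budget throughout the construction.

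For the induction step I case-split on the justification of \(\phi_{i+1}\) in the sequence proof. If \(\phi_{i+1}\) is an initial formula of the Hilbert calculus or a member of \(\Gamma\), it has a one-step tree derivation which I combine with \(\pi_i\) via the propositional tautology \(\Phi_i \to (\phi_{i+1} \to \Phi_{i+1})\). If \(\phi_{i+1}\) is obtained from \(\phi_j\) and \(\phi_k = \phi_j \to \phi_{i+1}\) by modus ponens with \(j,k \le i\), I compose \(\pi_i\) with small tree proofs of the \emph{projection} tautologies \(\Phi_i \to \phi_j\) and \(\Phi_i \to \phi_k\), apply modus ponens to derive \(\phi_{i+1}\), and then conjoin the resulting derivation with \(\pi_i\). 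If \(\phi_{i+1}\) is introduced by \((\allintro)\) or \((\exelim)\) from some \(\phi_j\) with eigenvariable \(a\), I extract \(\phi_j\) from \(\pi_i\) by projection and then apply the rule in tree form; the eigenvariable side condition concerns only \(\Gamma\) and therefore transfers unchanged from the original sequence proof, even though \(a\) may occur inside \(\Phi_i\).

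The hard part is supplying, uniformly in \(i\), tree proofs of the projection and pairing tautologies that are of size polynomial in \(|\Phi_i|\); done naively, chained applications of \(A \land B \to A\) could blow up quadratically in \(i\) and then multiplying across \(n\) induction steps is borderline. I would address this by organising \(\Phi_i\) in balanced binary shape and appealing to fixed, constant-size tree derivations of \(A \land B \to A\), \(A \land B \to B\), and \(A \to B \to A \land B\), chaining \(O(\log i)\) of them along a path of the balanced conjunction to project out any single conjunct. This yields tree proofs of size polynomial in \(|\Phi_i|\), and each induction step thus contributes polynomially many lines to \(\pi_{i+1}\).  Summing the contributions over the \(n \le \norm{\phi}^s\) induction steps yields a polynomial total, and the final tree proof of \(\phi\) is obtained from \(\pi_n\) by one last projection plus modus ponens, giving \(\norm{\phi}^t \le p(\norm{\phi}^s)\) for a suitable polynomial \(p\).
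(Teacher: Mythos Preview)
The paper does not supply a proof of this theorem; it is quoted as a result of Kraj\'i\v{c}ek and cited as~\cite{Krajicek94}. So there is no paper-proof to compare against, and your proposal has to stand on its own.

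The cumulative-conjunction strategy is the standard one, but as you describe the induction step it duplicates $\pi_i$ and therefore blows up exponentially rather than polynomially. In the modus ponens case you ``compose $\pi_i$ with'' each projection to obtain tree proofs of $\phi_j$ and $\phi_k$ (each already containing a full copy of $\pi_i$), and after the modus ponens you ``conjoin the resulting derivation with $\pi_i$'': that is three copies of $\pi_i$ inside $\pi_{i+1}$, giving $|\pi_{i+1}| \ge 3|\pi_i|$. Your quantifier case likewise uses two copies. The final claim that ``each induction step contributes polynomially many lines to $\pi_{i+1}$'' would require $|\pi_{i+1}| - |\pi_i|$ to be bounded by a polynomial in $n$, and the construction you wrote down does not deliver this. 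The balanced-conjunction trick you discuss controls only the size of the projection \emph{tautology} proofs, which was never the bottleneck.

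The repair for axioms and modus ponens is to assemble a short tree proof of the implication $\Phi_i \to \Phi_{i+1}$ directly (a propositional tautology in those cases, not using $\pi_i$) and then apply a single modus ponens against $\pi_i$. The quantifier case needs one further idea, because $\Phi_i \to \Phi_{i+1}$ is not purely propositional and the eigenvariable $a$ may sit inside $\Phi_i$: from $\pi_i$ first pass to $\forall a\,\Phi_i$ by one application of $(\allintro)$ (legitimate since $a \notin \Gamma$), and then give a short tree proof of $\forall a\,\Phi_i \to \Phi_{i+1}$ using $(\allelim)$, a projection, and one more $(\allintro)$ on a fresh variable. With this adjustment each step uses $\pi_i$ exactly once and the bound is polynomial as required.
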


In the rest of this paper we implicitly assume the regularity of
proofs.

\begin{dfn}[Critical count]\label{d:critical-count-1}
  For a proof \(\pi\), we let \(\cc(\pi)\) be the number of critical
  formulas, \allelim, and \exintro in \(\pi\).
\end{dfn}

\begin{lem}[Embedding]
  \label{l:embedding}
  Assume \(\PC + \EQ \vdash_\pi A\) for a formula \(A \in L(\PCEq)\), then
  \(\ECe + \EQ \vdash_\rho A^\eps\) for some \(\rho\) with \(\cc(\rho) \le
  \cc(\pi)\).
\end{lem}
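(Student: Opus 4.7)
The plan is to proceed by induction on the length of $\pi$, translating each line into a short derivation in $\ECe+\EQ$ whose final formula is the $\eps$-translation of that line; concatenating these derivations yields $\rho$. Since $(-)^\eps$ commutes with all propositional connectives and with atomic formulas, only the quantifier axioms and the quantifier rules require attention.

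For the initial formulas of $\PC+\EQ$, a propositional tautology remains a propositional tautology after $\eps$-translation, and each axiom of \EQ\ is quantifier-free so that its translation is again an instance of the same equality schema; neither contributes a critical formula. An instance $\forall x A(x) \to A(t)$ of \allelim\ translates to $A^\eps(\eps_x \neg A^\eps(x)) \to A^\eps(t^\eps)$, which I would derive from the critical formula $\neg A^\eps(t^\eps) \to \neg A^\eps(\eps_x \neg A^\eps(x))$ by contraposition, introducing exactly one critical formula. An instance $A(t) \to \exists x A(x)$ of \exintro\ translates directly to the critical formula $A^\eps(t^\eps) \to A^\eps(\eps_x A^\eps(x))$, again one critical formula. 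Modus ponens is preserved verbatim because $(A \to B)^\eps = A^\eps \to B^\eps$.

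The quantifier introduction rules are the principal obstacle. Suppose \allintro\ concludes $\Gamma \vdash A \to \forall x B(x)$ from $\Gamma \vdash A \to B(a)$, with eigenvariable $a$ not occurring in $\Gamma, A$, or in $B(x)$. By the induction hypothesis we have a proof $\rho'$ with $\ECe+\EQ, \Gamma \vdash_{\rho'} A^\eps \to B^\eps(a)$ and $\cc(\rho') \le \cc(\pi')$, where $\pi'$ is the corresponding prefix of $\pi$. I would then apply the uniform substitution $\sigma := \{a/\eps_x \neg B^\eps(x)\}$ to every line of $\rho'$. The eigenvariable condition guarantees that $\sigma$ fixes the formulas of $\Gamma$ and $A^\eps$, and that $\eps_x \neg B^\eps(x)$ contains no occurrence of $a$, so the last line becomes $A^\eps \to B^\eps(\eps_x \neg B^\eps(x))$, which is exactly $(A \to \forall x B(x))^\eps$. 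Because substitution sends critical formulas to critical formulas, propositional tautologies to propositional tautologies, equality axioms to equality axioms, and commutes with modus ponens, $\rho'\sigma$ remains a valid proof in $\ECe+\EQ$ with $\cc(\rho'\sigma) = \cc(\rho')$. The \exelim\ case is entirely symmetric, using $\sigma := \{a/\eps_x A^\eps(x)\}$.

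Altogether, since $\pi$ contains no critical formulas and each \allelim\ and \exintro\ step in $\pi$ contributes exactly one critical formula to $\rho$ while no other step introduces any, we obtain $\cc(\rho) \le \cc(\pi)$. The point requiring most care is the soundness of the uniform substitution through $\eps$-terms in the quantifier-introduction cases; here regularity of $\pi$ rules out variable-capture issues, and the eigenvariable constraint ensures that the $\eps$-term one substitutes with is itself free of the eigenvariable, so that the final line indeed matches the $\eps$-translation of the intended conclusion.
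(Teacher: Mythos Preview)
Your argument is correct and follows the same line-by-line translation strategy as the paper: propositional tautologies and \EQ\ axioms translate to themselves, \allelim\ and \exintro\ each become one critical formula, modus ponens is preserved, and the two quantifier rules are handled by substituting the appropriate \eps-term for the eigenvariable in the already-translated subproof (relying on regularity).

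The one substantive divergence is in the \exelim\ case. You treat it symmetrically to \allintro, substituting $\eps_x A^\eps(x)$ for the eigenvariable so that no critical formula is introduced. The paper instead computes the translation as $B^\eps(\eps_x(B^\eps(x)\to C^\eps))\to C^\eps$ and invokes a critical formula together with modus ponens. Your treatment is the right one here: the paper's displayed translation is that of $\exists x\,(B(x)\to C)$ rather than of $(\exists x\,B(x))\to C$, and with the correct translation $B^\eps(\eps_x B^\eps(x))\to C^\eps$ a critical formula is neither needed nor helpful. More importantly, since \exelim\ is not counted in $\cc(\pi)$ (Definition~\ref{d:critical-count-1}), adding a critical formula for each \exelim\ would break the bound $\cc(\rho)\le\cc(\pi)$; your substitution argument avoids this and delivers the stated inequality directly.
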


\begin{proof}
  We refer to the formula at line \(k\) in the proof by \(A_k\).  By
  induction on the length \(l:=|\pi|\).  If \(l=0\) it is trivial.  We
  prove the case \(l+1\), making case analysis how the formula
  \(A_{l+1}\) at the line \(l+1\) is derived.  In case it comes by
  modus ponens using \(A_{i}\) and \(A_j\), which is of the form \(A_i
  \to A_{l+1}\), for \(i,j \le l\), by the induction hypotheses there
  are proofs \(\rho_1\) and \(\rho_2\) concluding \({A_i}^\eps\) and
  \({A_i}^\eps \to {A_{l+1}}^\eps\), respectively, hence
  \({A_{l+1}}^\eps\) by modus ponens.  In case \(A_{l+1}\) is derived
  by \allintro, \(A_{l+1}\) is of the form \(\forall x A_i(x)\) for
  \(i \le l\).  As \((\forall x A_i(x))^\eps = {A_i}^\eps(\eps_x \neg
      {A_i}^\eps(x))\), it suffices to substitute \(x\) for \(\eps_x
      \neg {A_i}^\eps(x)\) throughout the proof of \({A_i}^\eps(x)\)
      which is due to the induction hypothesis.  Here we assumed the
      regularity of the proof.  In case \(A_{l+1}\) is derived by
      \exelim, \(A_{l+1}\) is of the form \(\exists x. B(x) \to C\)
      and \(A_i = B(t) \to C\) for \(i \le l\).  As \((\exists x. B(x)
      \to C)^\eps = {B}^\eps(\eps_x({B}^\eps(x) \to C^\eps)) \to
      C^\eps\), it suffices to use modus ponens with a critical
      formula and \(B^\eps(t^\eps) \to C^\eps\), which comes by
      induction hypothesis.  In case \(A_{l+1}\) is by
      \allelim, \(A_{l+1}\) is of the form \(\forall x B(x)
      \to B(t)\) and hence we prove \(B^\eps(\eps_x(\neg B^\eps(x)))
      \to B^\eps(t^\eps)\), whose contrapositive is a critical
      formula.  In case \(A_{l+1}\) is by \exintro, \(A_{l+1}\)
      is of the form \(B(t) \to \exists x B(x)\) and hence we prove
      \(B^\eps(t^\eps) \to B^\eps(\eps_x B^\eps(x))\) that is
      immediate as it is a critical formula.  The rest is the axioms.
      The rest is the cases for propositional tautologies and \EQ,
      which are all trivial.
\end{proof}

\begin{thm}[Herbrand's theorem]
  \label{t:Herbrand}
  Assume \(\exists \vec{x} E(\vec{x})\) is a prenex existential formula in
  \(L(\PCEq)\), namely, \(E(\vec{x})\) is quantifier free, and
  \begin{equation*}
    \PCEq \vdash \exists \vec{x} E(\vec{x}).
  \end{equation*}
  Then there are \eps-free terms \(\vec{t}_0, \vec{t}_1, \ldots,
  \vec{t}_{n}\) for \(n \le 2_{2\cdot\cc(\pi)}^{3 \cdot \cc(\pi)}\)
  such that
  \begin{equation*}
    \ECEq \vdash \bigvee_{i=0}^n E(\vec{t}_i).
  \end{equation*}
\end{thm}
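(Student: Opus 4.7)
The plan is to reduce Herbrand's theorem for \PCEq to the extended first epsilon theorem for \ECe without equality (due to Moser and Zach), mediated by the Embedding Lemma. First, I would apply Lemma~\ref{l:embedding} to the given \PCEq-proof $\pi$ of $\exists \vec{x} E(\vec{x})$, obtaining an $\ECe + \EQ$-proof $\rho$ of $(\exists \vec{x} E(\vec{x}))^\eps$ with $\cc(\rho) \le \cc(\pi)$. Since $E$ is quantifier-free, its \eps-translation leaves it unchanged, and iterating the clause $(\exists x A(x))^\eps = A^\eps(\eps_x A^\eps(x))$ exhibits $(\exists \vec{x} E(\vec{x}))^\eps$ as $E(\vec{s})$ for a tuple $\vec{s}$ of \eps-terms. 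Thus $\rho$ concludes a quantifier-free formula of $L(\ECe)$ built from $E$ by substituting \eps-terms for its variables.

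Next, I would invoke the Moser--Zach extended first epsilon theorem on $\rho$. To match their equality-free framework, I treat the symbol $=$ as an ordinary binary predicate and the schemata in \EQ as quantifier-free, \eps-free axioms carried along in the background set $\Gamma$. Because formulas in \EQ contain neither \eps-operators nor quantifiers, they are invariant under each \eps-substitution stage of the elimination procedure; they do not contribute to the critical count and do not interact with the Moser--Zach bookkeeping on \eps-terms. The theorem then yields \eps-free terms $\vec{t}_0, \ldots, \vec{t}_n$ and an $\EC + \EQ$-proof of $\bigvee_{i=0}^n E(\vec{t}_i)$ with $n \le 2_{2\cdot\cc(\rho)}^{3\cdot\cc(\rho)}$. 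Since $\cc(\rho) \le \cc(\pi)$ by the Embedding Lemma and since $\EC + \EQ$ is by definition \ECEq, the stated bound follows.

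The main obstacle is precisely the harmlessness claim made in the previous paragraph: one needs to verify that enlarging the axiom set of a pure \ECe-proof by the quantifier-free schema \EQ leaves the Moser--Zach \eps-elimination intact, both qualitatively (each substitution and duplication step preserves \EQ-axioms as \EQ-axioms) and quantitatively (the bound depending only on $\cc(\rho)$ is unaffected since these axioms carry no critical \eps-terms). This is a routine induction following their construction, and once it is in place no further argument is required. In particular, the \eps-equality formula plays no role in this reduction, which is the sense in which Herbrand's theorem with equality is a direct corollary of the equality-free analysis of Moser and Zach.
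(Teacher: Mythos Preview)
Your proposal is correct and follows essentially the same route as the paper: apply the Embedding Lemma to pass from \PCEq to $\ECe+\EQ$, observe that $(\exists\vec{x}E(\vec{x}))^\eps$ is $E(\vec{s}\,)$ for suitable \eps-terms, and then invoke the Moser--Zach extended first epsilon theorem for \ECe with the \EQ-axioms treated as inert (the paper phrases this as ``\EQ being propositional tautologies''). Your discussion of why the \EQ-schemata survive the \eps-elimination untouched is more explicit than the paper's one-line justification, but the argument is the same.
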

\begin{proof}
  Assume \(\pi\) is the \(\PCEq\)-proof of \(\exists
  \vec{x} E(\vec{x})\).  By means of Lemma~\ref{l:embedding}, there
  is an \ECe-proof \(\rho\) of \((\exists \vec{x} E(\vec{x}))^\eps\),
  which is namely \(E(\vec{e})\) for some \eps-terms \(\vec{e}\), then
  the conclusion follows from extended first epsilon theorem for \ECe
  (cf.~Theorem 16 in \cite{MoserZach06}) with \EQ being propositional
  tautologies.
\end{proof}

\section{Epsilon Calculus with the \eps-Equality Formula}
\label{s:epsilon-calculus-with-equality}
Epsilon calculus with equality was originally introduced by
Hilbert~\cite{HilbertBernays39}.  Assuming \(\eps_x A(x, a)\) is an
\eps-matrix, he formulated the \eps-equality formula as follows.
\begin{align}
  \label{f:eps-equality-Hilbert}
  u=v \to \eps_x A(x, u) = \eps_x A(x, v)
\end{align}
In this section we adopt a variant of the \eps-equality formula which
is given as follows via the vector notation.
\begin{align}
  \label{f:eps-equality}
  \vec{u}=\vec{v} \to \eps_x A(x; \vec{u}) = \eps_x A(x; \vec{v})
\end{align}
Then we define our system of epsilon calculus with equality \ECeEq to
be \(\ECe + \EQ\) extended with the initial formula
(\ref{f:eps-equality}).  The \eps-elimination method and the proofs of
epsilon theorems for \ECeEq can be simpler than the ones for the
original system by Hilbert.  While the notion of \emph{closures} is
crucial in Hilbert and Bernays' work, we do not need this notion in
\ECeEq.  Moreover, concerning the hyperexponential part of the upper
bound analysis of the Herbrand complexity, our result for \ECeEq is
better than the one for the system with (\ref{f:eps-equality-Hilbert}),
as it will be shown in Section~\ref{ss:alt}.

\begin{dfn}[\eps-matrix and semicolon notation]
  An \eps-term \(e\) is an \emph{\eps-matrix} iff each proper subterm
  of \(e\) is a free variable and each free variable in \(e\) occurs
  exactly once.  The \eps-matrix and its immediate subsemiformula can
  be denoted as \(\eps_x A(x; \vec{a})\) and as \(A(x; \vec{a})\),
  respectively, if and only if \(\eps_x A(x, \vec{a})\) is an
  \eps-matrix with its free variables \(\vec{a}\).  We call the free
  variables \(\vec{a}\) of an \eps-matrix \(\eps_x A(x; \vec{a})\) its
  \emph{parameters}.
\end{dfn}

Conventionally, we let \(g\) range over \eps-matrices, possibly with
its parameters \(\vec{a}\) explicitly denoted as \(g(\vec{a})\).  For
any \eps-term, its \eps-matrix is uniquely determined modulo free
variable names.  If \(e\) is a critical \eps-term, the \eps-matrix of
\(e\) is called a critical \eps-matrix.

\begin{dfn}[Arity of \eps-matrix]
  For an \eps-matrix \(\eps_x A(x; \vec{a})\), we define its
  \emph{arity} \(\arity(\eps_x A(x; \vec{a}))\) to be \(|\vec{a}|\).
  Let \(\marity(\pi, r)\) be the maximal arity of critical
  \eps-matrices of rank \(r\) in \(\pi\), and \(\marity(\pi)\) be
  \(\max\{\marity(\pi, r) \mid r \le \rk(\pi)\}\).
\end{dfn}

\begin{lem}\label{l:eps-term-to-matrix}
  If \(e\) is an \eps-term, then \(e \aeq g(\vec{t}\,)\) for some \eps-matrix \(g(\vec{a})\) and \(\vec{t}\).
\end{lem}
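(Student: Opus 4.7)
The plan is to construct the required \eps-matrix $g$ and parameter list $\vec{t}$ by abstracting the parameter positions of $e$. Write $e = \eps_x A(x)$ and enumerate the occurrences of those maximal subterms of $A$, appearing in predicate or function argument positions, that do not contain $x$ freely. For each such occurrence introduce a fresh distinct free variable $a_i \in \OVar$ and record the original subterm as $t_i$; crucially, each occurrence contributes its own $a_i$, even if the underlying subterm is the same free variable appearing several times, so that the clause ``each free variable of $g$ occurs exactly once'' is secured. Replacing the $i$-th occurrence in $A$ by $a_i$ yields a semiformula $A'(x;\vec{a})$, and we set $g(\vec{a}) := \eps_x A'(x;\vec{a})$ with $\vec{t} = t_0, \ldots, t_{n-1}$.

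I would then verify that $g$ is indeed an \eps-matrix and that $g(\vec{t}\,) \aeq e$. For the first, observe that after the abstraction every argument position in $A'$ is occupied either by the bound variable $x$ or by one of the fresh $a_i$; the $a_i$ occur exactly once by construction, which is what the definition of \eps-matrix requires. For the second, the simultaneous substitution $\{\vec{a}/\vec{t}\,\}$ is precisely the inverse of the abstraction step: a routine induction on the structure of $A$ shows that substitution commutes with every connective and term constructor, while the capture-avoiding clauses of the substitution definition, together with an $\aeq$-renaming of the outer $\eps_x$ if any $t_i$ happens to contain $x$ freely in its ambient context, absorb any naming clashes.

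The main obstacle is the treatment of subterms that do contain the outer bound variable $x$: such subterms cannot themselves be promoted to parameters, for substituting them back would wrongly free their $x$ occurrences. Consequently, the abstraction procedure must descend into $x$-containing argument slots and abstract inside them, and the enumeration must be indexed by occurrences rather than by values so that, e.g., a free variable $a$ appearing twice gives rise to two distinct parameters $a_0, a_1$. Beyond this book-keeping, the argument is a straightforward structural induction on $A$, carried out modulo $\aeq$.
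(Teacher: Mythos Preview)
Your construction is exactly what the paper does, only the paper compresses it to two lines: take $\vec{t}$ to be the ``immediate subterms'' of $e$ (i.e., the maximal proper subterms, one entry per occurrence), replace each occurrence by a fresh free variable, and observe that the result is an $\eps$-matrix. One point to tighten: your criterion ``does not contain $x$ freely'' must be strengthened to ``contains no bound variable freely'' so that nested binders are handled---for instance, in $\eps_x P(\eps_y Q(x, f(y)))$ the raw subterm $f(y)$ is $x$-free but must not be abstracted, since substituting it back would capture $y$. Equivalently, abstract precisely the maximal sub-raw-terms that are \emph{terms} in the paper's sense (no free occurrences of any bound variable), which is what ``immediate subterm'' is meant to convey and which makes your $x$-condition redundant rather than the operative one.
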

\begin{proof}
  Let \(\vec{t}\) be all the immediate subterms of \(e\) and
  \(\vec{a}\) be fresh variables, so that \(e \aeq e(\vec{t}\,)\).
  Then, \(e(\vec{a})\) is the \eps-matrix \(g(\vec{a})\).
\end{proof}

The epsilon calculus with equality by Hilbert and Bernays also employs
the \emph{\eps-equality formula} as an initial formula.

\begin{dfn}[Epsilon calculus with the \eps-equality formula]
\label{d:ECeEq}  
  Let \PCeEq and \ECeEq be \(\PCe + \EQ\) and \(\ECe + \EQ\) extended
  with the following additional initial formula, respectively,
  \begin{align*}
    \vec{u}=\vec{v} \to \eps_x A(x; \vec{u})=\eps_x A(x; \vec{v}),
    & 
  \end{align*}
  where \(\vec{u}\) and \(\vec{v}\) are term vectors of the same
  length as of the parameters \(\vec{a}\) of \eps-matrix
  \(\eps_x A(x; \vec{a})\).
  A formula of the form \(\vec{u}=\vec{v} \to \eps_x A(x;
  \vec{u})=\eps_x A(x; \vec{v})\) is an \emph{\eps-equality formula},
  where \(\eps_x A(x; \vec{u})\) and \(\eps_x A(x; \vec{v})\) are
  called the \emph{critical \eps-terms} of the \eps-equality formula.
  We also say that the \eps-equality formula belongs to \(\eps_x A(x;
  \vec{u})\) and to \(\eps_x A(x; \vec{v})\).
\end{dfn}

According to the semicolon notation, the \eps-equality formula always
belongs to critical \eps-terms \(\eps_x A(x; \vec{u}), \eps_x A(x;
\vec{v})\) which were formed by applying substitutions
\(\{\vec{a}/\vec{u}\}, \{\vec{a}/\vec{v}\}\) to an \eps-matrix
\(\eps_x A(x; \vec{a})\).  The next section details this constraint
from a perspective of complexity analysis.  Due to the \eps-equality
formula, the identity schema is available in \(L(\PCeEq)\).
\begin{lem}[Identity schema]
  Let a formula \(A(a)\) and terms \(s,t\) be in \(L(\PCeEq)\), then
  \(\PCeEq \vdash s=t \to A(s) \to A(t)\).  The same holds for
  \(L(\ECeEq)\) in \ECeEq.
\end{lem}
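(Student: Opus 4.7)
The plan is to prove the identity schema by simultaneous induction on the sizes of terms and formulas, extending the argument that established the earlier identity schema for $L(\PCEq)$. Specifically, I would establish in parallel two statements: (i) for every term $u(a) \in L(\PCeEq)$, $\PCeEq \vdash s = t \to u(s) = u(t)$; and (ii) for every formula $A(a) \in L(\PCeEq)$, $\PCeEq \vdash s = t \to A(s) \to A(t)$. The $\ECeEq$ version follows by the same argument restricted to $L(\ECeEq)$, where the quantifier cases of (ii) do not arise.

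For (i), the variable cases ($u \equiv a$ and $u \equiv b$ with $b \not\equiv a$) are immediate from the hypothesis $s = t$ and from reflexivity in $\EQ$, respectively. The function-symbol case $u(a) \equiv f\vec v(a)$ combines the induction hypothesis componentwise with the congruence axiom $\vec v(s) = \vec v(t) \to f\vec v(s) = f\vec v(t)$ from $\EQ$. The genuinely new case is when $u(a)$ is an $\eps$-term: here I would invoke Lemma~\ref{l:eps-term-to-matrix} to rewrite $u(a) \aeq g(\vec w(a))$ for some $\eps$-matrix $g(\vec a)$ and immediate subterms $\vec w(a)$, apply (i) inductively to each strictly smaller $w_i(a)$ to derive $w_i(s) = w_i(t)$, and then close with the $\eps$-equality formula $\vec w(s) = \vec w(t) \to g(\vec w(s)) = g(\vec w(t))$, observing that $u(s) \aeq g(\vec w(s))$ and $u(t) \aeq g(\vec w(t))$.

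For (ii), the atomic cases $A(a) \equiv P\vec v(a)$ and $A(a) \equiv v(a) = v'(a)$ reduce to (i) together with the congruence axioms in $\EQ$. The propositional cases for $\neg, \to, \land, \lor$ follow exactly as in the proof of the earlier identity schema, using (ii) on immediate subformulas and, for the negation case, the symmetry of $=$ to turn $s = t$ into $t = s$ before applying the induction hypothesis. For the quantifier cases $\forall x B(x, a)$ and $\exists x B(x, a)$ in the $\PCeEq$ statement, I would take a fresh eigenvariable $c$, apply (ii) inductively to $B(c, a)$, and combine with $\allelim$ and $\allintro$ (respectively $\exintro$ and $\exelim$) to close; the eigenvariable condition is met by freshness of $c$.

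The main obstacle is the $\eps$-term case of (i): this is precisely the step at which the earlier identity schema fails once the language is enlarged with $\eps$-terms, and it is exactly what the $\eps$-equality formula was designed to repair, via the $\eps$-matrix decomposition provided by Lemma~\ref{l:eps-term-to-matrix}. Once this case is in place, the remaining induction mirrors the standard proof and is routine.
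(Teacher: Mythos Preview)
Your proposal is correct and takes essentially the same approach as the paper, whose proof consists of the single phrase ``By induction.'' Your simultaneous induction on terms and formulas, with the $\eps$-term case handled via Lemma~\ref{l:eps-term-to-matrix} and the $\eps$-equality formula, is precisely the intended unfolding of that induction.
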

\begin{proof}
  By induction.
\end{proof}

We further define means of measuring complexity of terms and proofs,
which are used in the next sections to study procedures of eliminating
critical \eps-terms.  The \emph{rank} counts the depth of nesting
\eps-semiterms, while the \emph{degree} counts the depth of nesting
\eps-terms.  Here we suppose that \(\max\{\} = 0\).

\begin{dfn}[Rank] \label{d:rank}
  We define the rank \(\rk(t)\) for a (semi)term \(t\).
  \begin{align*}
    & \rk(a) := \rk(x) := 0, \quad \rk(f\vec{t}\,) := \max\{\rk(t_i) \mid i < |\vec{t}\,|\}, \\
    & \rk(\eps_x A (x)) := \max\{\rk(t) \mid t \text{ subordinates } \eps_x A(x)\} + 1,
  \end{align*}
  where \(t\) subordinates \(\eps_x A(x)\) iff \(x \in \NQVar(t)\) and
  \(t\) is a subsemiterm of \(A(x)\).  We define \(\rk(\pi)\) be
  \(\max\{\rk(e_0), \ldots, \rk(e_{n-1})\}\), where \(\rk(e_0),
  \ldots, \rk(e_{n-1})\) are the critical \eps-terms in \(\pi\).
\end{dfn}
The rank is stable against substitutions.

\begin{lem} \label{l:rank-subst}
  For any terms \(t(\vec{a})\) and \(\vec{u}\), $\rk(t(\vec{a})) =
  \rk(t(\vec{u}))$.
\end{lem}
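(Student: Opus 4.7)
The plan is to proceed by structural induction on the term $t$. The intuition is that the rank captures the nesting of $\eps$-bindings in $t$ and which subsemiterms depend on each bound variable; this structural feature is untouched by substituting free variables $\vec{a}$ (which are disjoint from every bound variable occurring in $t$) by arbitrary terms $\vec{u}$. The inductive argument will mirror the three clauses of Definition~\ref{d:rank}.

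In the base cases, if $t$ is a variable (bound, or a free variable), the ranks on both sides are $0$ by definition. For the inductive case $t \equiv f\vec{s}$, the identity $\rk(f\vec{s}) = \max_i \rk(s_i)$ combined with the induction hypothesis on each $s_i$ gives the desired equality componentwise, so the maxima agree.

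The principal case is $t \equiv \eps_x B(x)$. After $\alpha$-renaming $x$ to avoid capture by free variables of $\vec{u}$, the substitution $\{\vec{a}/\vec{u}\}$ distributes through $B(x)$. A subsemiterm $s$ of $B(x)$ satisfies $x \in \NQVar(s)$ if and only if the corresponding subsemiterm $s\{\vec{a}/\vec{u}\}$ of $B(x)\{\vec{a}/\vec{u}\}$ satisfies $x \in \NQVar(s\{\vec{a}/\vec{u}\})$, since the substitution only replaces variables in $\vec{a}$, which are disjoint from $\{x\}$. Hence the subordinate subsemiterms before and after substitution are in bijective correspondence, and the induction hypothesis applied to each corresponding pair yields equal ranks; the maxima then coincide, proving $\rk(\eps_x B(x)) = \rk(\eps_x B(x)\{\vec{a}/\vec{u}\})$.

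The main technical obstacle is establishing the bijection between subordinate subsemiterms before and after substitution and ensuring that $\alpha$-renaming does not perturb the rank. This is largely bookkeeping, but verifying that no subordinate subsemiterm is introduced or lost as a consequence of substitution crucially uses the disjointness between the bound variables of $t$ and the substituted parameters $\vec{a}$, together with the fact that the substituted terms $\vec{u}$ do not contain the outer bound variable $x$ after the $\alpha$-conversion.
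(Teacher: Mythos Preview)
Your structural induction is more explicit than the paper's proof, which is the single sentence ``nothing new is subordinating in $t(\vec u)$ due to the substitution of $\vec u$ for $\vec a$, hence it is obvious from Definition~\ref{d:rank}.'' The bijection you set up in the $\eps$-case---subordinate subsemiterms before and after substitution correspond because the (freshly renamed) bound variable $x$ never occurs in any $u_i$---is precisely the content of that sentence.

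There is, however, a genuine gap. Your base case asserts that if $t$ is a free variable then both sides have rank $0$; but if $t$ is one of the parameters $a_i$ itself, then $t(\vec u)=u_i$, whose rank need not be $0$. (Indeed the lemma read literally for arbitrary terms already fails at $t\equiv a$.) The same problem propagates through the function-term clause and into the $\eps$-case: a subordinate subsemiterm such as $f(x,a_i)$ has rank $0$, while its image $f(x,u_i)$ has rank $\rk(u_i)$ by the clause $\rk(f\vec s)=\max_j\rk(s_j)$. So the induction hypothesis you invoke on ``each corresponding pair'' does not hold for such pairs, and the induction does not close.

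The paper's one-liner works because it tacitly reads the subordination clause of Definition~\ref{d:rank} as ranging over $\eps$-subsemiterms only (the standard convention in the literature). Under that reading the recursion for $\rk$ never descends into a non-$\eps$ argument like $u_i$, your bijection is between $\eps$-subsemiterms of strictly smaller size, and an induction restricted to $\eps$-(semi)terms goes through cleanly. Your argument becomes correct once confined to that setting; what fails is the attempt to push the claim through arbitrary (semi)terms via the full recursive definition.
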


\begin{proof}
  Comparing with \(t(\vec{a})\), nothing new is subordinating in
  \(t(\vec{u})\) due to the substitution of $\vec{u}$ for \(\vec{a}\),
  hence it is obvious from Definition~\ref{d:rank}.
\end{proof}

\begin{lem} \label{l:rank-eps-matrix}
  For an \eps-matrix \(\eps_x A(x, \vec{b})\), \(\rk(\eps_x A(x,
  \vec{b}))=\rk(A(a, \vec{b}))+1\).
\end{lem}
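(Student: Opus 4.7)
The plan is to unfold the definition of $\rk$ on the left-hand side and then exploit the $\eps$-matrix structure to show that the maximum over \emph{subordinating} subsemiterms of $A(x,\vec{b})$ coincides with the maximum over \emph{all} subsemiterms, after which Lemma~\ref{l:rank-subst} transports everything to $A(a,\vec{b})$. Throughout, I read $\rk(A(a,\vec{b}))$ as the maximum of $\rk(t)$ over subsemiterms $t$ of the formula $A(a,\vec{b})$, with the convention $\max\emptyset=0$.

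By Definition~\ref{d:rank},
\[
\rk(\eps_x A(x,\vec{b}))=\max\bigl\{\rk(t)\mid t \text{ subsemiterm of } A(x,\vec{b}),\ x\in\NQVar(t)\bigr\}+1.
\]
The first step is to argue that this maximum is unaffected if one drops the restriction ``$x\in\NQVar(t)$''. So let $t$ be a subsemiterm of $A(x,\vec{b})$ with $x\notin\NQVar(t)$. Then $t$ is a genuine term, and hence a subterm of $\eps_x A(x;\vec{b})$. Since $\eps_x A(x;\vec{b})$ is an $\eps$-matrix, its proper subterms are only the variables $\vec{b}$, so $t$ is a variable and $\rk(t)=0$. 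Such terms therefore cannot strictly increase the maximum, unless the maximum over the subordinating subsemiterms is itself empty; but in that case both maxima are $0$ by the empty-max convention. Hence
\[
\rk(\eps_x A(x,\vec{b}))=\max\bigl\{\rk(t)\mid t \text{ subsemiterm of } A(x,\vec{b})\bigr\}+1.
\]

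The second step is to pass from the bound variable $x$ to the free variable $a$. The subsemiterms of $A(a,\vec{b})$ are precisely the $\alpha$-equivalence classes obtained from the subsemiterms of $A(x,\vec{b})$ by the substitution $\{x/a\}$, and by Lemma~\ref{l:rank-subst} this substitution preserves the rank of every term. Consequently the two maxima coincide:
\[
\max\bigl\{\rk(t)\mid t \text{ subsemiterm of } A(x,\vec{b})\bigr\}
=\max\bigl\{\rk(t)\mid t \text{ subsemiterm of } A(a,\vec{b})\bigr\}
=\rk(A(a,\vec{b})),
\]
which yields $\rk(\eps_x A(x,\vec{b}))=\rk(A(a,\vec{b}))+1$.

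The only genuinely delicate step is the first one: one has to invoke the $\eps$-matrix hypothesis exactly in the right place to conclude that every non-subordinating subsemiterm is a bare parameter variable, and then treat the edge case where no subsemiterm of $A(x,\vec{b})$ contains $x$ (in which case both sides evaluate to $1$ via $\max\emptyset=0$). Everything else is bookkeeping on the definitions of rank and of $\eps$-matrix and a single appeal to Lemma~\ref{l:rank-subst}.
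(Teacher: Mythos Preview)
Your overall strategy is reasonable, and the paper's own proof is a bare ``By induction on the construction of $\eps_x A(x,\vec{b})$'' with no further detail, so any explicit argument already goes beyond what the paper offers. There is, however, a genuine gap in your first step.

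You claim that every subsemiterm $t$ of $A(x,\vec{b})$ with $x\notin\NQVar(t)$ is a term, and then invoke the $\eps$-matrix property to conclude $t$ is a variable. This inference is not valid when $A$ contains nested $\eps$-binders. Consider the $\eps$-matrix
\[
\eps_x\,P\bigl(\eps_y\,(R(x,y,b)\land Q(\eps_z S(y,z)))\bigr).
\]
Its only proper subterm is $b$, so it is indeed an $\eps$-matrix. Yet $t:=\eps_z S(y,z)$ is a subsemiterm of $A(x,b)$ with $x\notin\NQVar(t)$ and $y\in\NQVar(t)$, so $t$ is not a term. The $\eps$-matrix hypothesis constrains only proper sub\emph{terms}; it says nothing about semiterms carrying other bound variables.

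Your conclusion---that the maximum over subordinating subsemiterms agrees with the maximum over all subsemiterms---is nonetheless correct, but it needs an additional argument. In $L(\ECeEq)$ the only binder is $\eps$, so any such $t$ lies inside some $\eps_y$-semiterm $s$ with $y\in\NQVar(t)$; then $t$ subordinates $s$, whence $\rk(t)<\rk(s)$. Iterating outward, one reaches an $\eps$-semiterm in which $x$ is free (otherwise one would obtain a proper subterm of the $\eps$-matrix that is itself an $\eps$-term, contradicting the matrix property), and this semiterm subordinates $\eps_x A(x,\vec{b})$ while dominating $\rk(t)$. This is precisely the kind of structural descent that the paper's one-line ``by induction'' gestures at; your direct argument is fine once this missing case is handled.
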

\begin{proof}
  By induction on the construction of \(\eps_x A(x, \vec{b})\).
\end{proof}

\begin{dfn}[Degree]
  For a (semi)term \(t\), we define its degree
  \(\dgr(t)\).
  \begin{align*}
    & \dgr(a) := \dgr(x) := 0, \quad \dgr(f\vec{t}\,) := \max\{\dgr(t_i) \mid i < |\vec{t}\,|\},\\
    & \dgr(\eps_x A(x)) := \max\{\dgr(t) \mid t \text{ is a subterm of } A(x)\} +1,
  \end{align*}
\end{dfn}

\begin{dfn}[Maximal critical \eps-term]
  Let \emph{maximal critical \eps-terms} of a proof \(\pi\) be the set
  of critical \eps-terms of the greatest degree among the set of
  critical \eps-terms of the greatest rank in a proof \(\pi\).
\end{dfn}

We conclude this section by defining measures for the proof complexity
based on critical \eps-terms, \eps-matrices, critical formulas and
\eps-equality formulas.

\begin{dfn}[Order]
  For a proof \(\pi\), the number of distinct critical \eps-terms of
  rank \(r\) in \(\pi\) is denoted by \(\order(\pi, r)\) which we call
  the \emph{order}, and the number of distinct \eps-matrices is
  defined in the same manner and denoted by \(\matrices(\pi, r)\)
  which we call the \emph{matrix order}.
\end{dfn}

\begin{dfn}[Width]
  Define \(\widtheps(\pi, e)\) and \(\widtheq(\pi, e)\) by the number
  of distinct critical formulas belonging to \(e\) in \(\pi\) and of
  distinct \eps-equality formulas belonging to \(e\) in \(\pi\),
  respectively.  The \emph{width} \(\width(\pi, e)\) is defined to be
  \(\widtheps(\pi, e)+\widtheq(\pi, e)\).  Let \(\vec{e}\) be critical
  \eps-terms in \(\pi\), then the maximal width \(\mwidth(\pi, r)\) is
  defined to be \(\max\{\width(\pi, e_i) \mid \rk(e_i)=r\}\).
\end{dfn}

In order to measure the number of \eps-equality formulas, we replace
the notion of the critical count in
Definition~\ref{d:critical-count-1} by the following one.

\begin{dfn}[Critical count]\label{d:critical-count-2}
  Assume \(\pi\) is a proof in \PCeEq or \ECeEq.  The critical count
  \(\cc(\pi)\) of \(\pi\) is defined to be the sum of the numbers of
  critical formulas, \eps-equality formulas, \allelim, and \exintro in
  \(\pi\).  We let \(\cce(\pi)\) and \(\ccEq(\pi)\) be the numbers of
  critical fomulas and of \eps-equality formulas in \(\pi\),
  respectively.
\end{dfn}

\section{Yukami's Trick}

In this short section, we clarify the need for the restriction to \eps-matrices
in the definition of \eps-equality axioms, cf.~\eqref{f:eps-equality-Hilbert} and~\eqref{f:eps-equality} (see also Definition~\ref{d:ECeEq}).

For the sake of the argument we assume, for the duration of this section only, that the restriction to \eps-matrices is dropped. We focus on the above
formulation of \eps-equality axioms, using vector notation, as expressed in~\eqref{f:eps-equality}. However, the below given argument is equally valid for Hilbert's original definition (if we drop the restriction to \eps-matrices).

We will employ Yukami's trick~\cite{Yukami84} together with folkore results in structural
proof theory~\cite{Buss:1998,Pudlak:1998}. For additional
insight into the proof theoretic strength of applications of
identiy schema, see~\cite{BaazFerm01}.

\begin{thm}[cf.~\cite{Yukami84}]
\label{t:yukami}
Using two instances of the following restricted scheme of identity
\begin{equation}
\label{eq:restr_identity}
t = 0 \impl g(t) = g(0)
\end{equation}
we can uniformly derive $0^k \defsym \overbrace{0 + (0 + \cdots (0+ 0))}^{k{\rm \ times}} = 0$,
from (i) $0 + 0 = 0$, (ii) $\forall x,y,z \ x = y \land y=z \impl x=z$, and
(iii) $\forall x,y \ x + y = y \impl x = 0$.
\end{thm}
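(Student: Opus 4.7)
The plan is to reach $0^k = 0$ by a final application of axiom (iii) to a suitably derived equation of the form $0^k + z = z$, since (iii) instantiated at $x \defsym 0^k$ and $y \defsym z$ immediately yields $0^k = 0$.  The two identity instances are used to build this absorbing equation out of axiom (i) and transitivity (ii) alone.

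Concretely, the first identity instance is applied with context $g_1(x) \defsym x + 0^{k-1}$ and premise $0+0 = 0$ supplied by (i); via modus ponens this produces the equation $E_1 \colon (0+0) + 0^{k-1} = 0 + 0^{k-1}$, whose right-hand side is syntactically the term $0^k$.  The second identity instance is then chosen with a context $g_2$ whose instantiated conclusion, combined with $E_1$ through a small, $k$-independent number of transitivity steps, is precisely the equation $0^k + z = z$ demanded by (iii).  The template of the proof is therefore uniform in $k$: axiom (i), identity instance 1 with $g_1$ depending on $k$, identity instance 2 with $g_2$ depending on $k$, a bounded number of transitivity steps, and a final application of (iii) to conclude $0^k = 0$.

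The delicate step, and the heart of Yukami's trick, is exhibiting $g_2$ explicitly, since in our restricted setting neither the unrestricted congruence rule $s = t \impl g(s) = g(t)$ nor an associativity axiom is available.  In particular, $E_1$ cannot be used to rewrite inside other terms, the right-nested $0^k$ cannot be rebracketed, and the axioms (i)--(iii) alone permit no substitution steps.  The only reason two identity instances suffice uniformly in $k$ is that the meta-variable $g$ is entirely unconstrained in size, so $g_2$ can hard-code the depth-$k$ nesting of $0^k$ in a single application.  Once $g_2$ is identified, the remaining reasoning is a bounded chain of modus ponens and transitivity steps which, under the regularity convention adopted earlier in the paper, yields a well-formed proof of $0^k = 0$ from (i), (ii), (iii) and the two identity instances, as required.
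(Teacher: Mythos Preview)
Your proposal has a genuine gap: you explicitly concede that exhibiting $g_2$ is ``the delicate step, and the heart of Yukami's trick,'' and then you do not exhibit it. A proof that names the crucial step and leaves it open is not a proof.

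Moreover, your concrete choice $g_1(x) \defsym x + 0^{k-1}$ does not lead to the desired conclusion with a single further identity instance. From $E_1 \colon (0{+}0) + 0^{k-1} = 0^k$ you can only chain via transitivity to an equation sharing one of these two sides; running through the possibilities for $g_2$ with $g_2(0{+}0)$ or $g_2(0)$ equal to one of them yields at best $0^2 + 0^{k-1} = 0^{k-1}$ (hence $0^2 = 0$ by (iii)), not $0^k = 0$. The single-hole context $g_1$ moves only one $0$ at a time, and with one more instance you cannot bridge the remaining $k{-}2$ steps.

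The actual trick, which your outline misses entirely, is a telescoping device: the first context $r_1$ has \emph{many} occurrences of the hole (one inside each $0^j$ appearing in the big term $0^n + (0^{n-1} + \cdots + (0^2 + 0))$), so that a single application of the identity scheme with $t = 0{+}0$ shifts every $0^j$ to $0^{j-1}$ simultaneously. The second context $r_2$ has a single hole at the innermost position of the tail; one application there shifts the tail back by one. The point is that $r_1(0)$ and $r_2[0{+}0]$ are syntactically the same term, so transitivity gives $r_1(0{+}0) = r_2[0]$, which reads $0^n + A = A$ for $A = r_2[0]$, and (iii) finishes. Nothing of the form $g_1(x) = x + 0^{k-1}$ can reproduce this collapse; the multi-occurrence context is essential.
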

\begin{proof}
Let $r_1(0+0) \equiv 0^n + ( 0^{n-1} + \cdots + (0^2 + 0) \ldots )$
where $0+0$ is fully indicated.
Let $r_2[0+0] \equiv 0^{n-1} + ( 0^{n-2} + \cdots + (0^2 + (0 + 0)) \ldots )$,
where $0+0$ in $r_2[0+0]$ refers only to the innermost occuring term $0+0$.
The following equalities can be easily derived
(employing in addition suitable instance of the transitivity axiom (ii)
and axiom (i))
\begin{eqnarray*}
0^n + \overbrace{(0^{n-1} + \cdots + (0^2 + 0))}^{A} & = & \\
0^{n-1} + (0^{n-2} + \cdots + (0 + 0)) & = & \\
\underbrace{0^{n-1} + (0^{n-2} + \cdots + (0^2 + 0))}_{A}
\end{eqnarray*}
if we employ the instances of~\eqref{eq:restr_identity}
$$0+0 = 0 \impl r_1(0+0) = r_1(0)$$
and
$$0+0 = 0 \impl r_2[0+0] = r_2[0]$$
Hence we have derived $r_1(0+0) = r_2[0]$.
Eventually, to obtain the desired result, we apply axiom (iii), as $r_1(0+0) = r_2[0]$
is nothing else than $0^n + r_2[0] = r_2[0]$
($r_2[0]$ is indicated by $A$ above).

Note that the derivation is uniform for any $k \ge 0$: while for any $k$ the proof slightly differs, the number of steps and in particular the critical count is constant.
\end{proof}

\begin{rem}
Using induction one can derive~\eqref{eq:restr_identity} uniformly
from (i) $\forall x \ s(x) \not=0$ and (ii) $\forall x \ x=x$. That is Yukami's trick is available in any suitable rich arithmetical theory.
\end{rem}

The next result clarifies that the restricted identity axioms employed in Yukami's trick are uniformly derivable if no additional restriction on the form of the \eps-terms are enforced in~\eqref{f:eps-equality}. Let ${\EC'}_\eps^=$ denote the extension of the \eps-calculus $\ECe$ with the following axioms to cover \eps-equality:
  \begin{equation*}
    \vec{u}=\vec{v} \to \eps_x A(x; \vec{u}) = \eps_x A(x; \vec{v}) \tkom
  \end{equation*}
  where $\eps_x A(x; \vec{u})$, $\eps_x A(x; \vec{v})$ denote (arbitrary) \eps-terms.
  
  \begin{lem}
    \label{l:yukami}
  The following identity schema, generalising~\eqref{eq:restr_identity}, is derivable in~${\EC'}_\eps^=$:
  \begin{equation*}
    s=t \impl g(s) = g(t) \tkom
  \end{equation*}
  where $g$ is an aribrary term in $L(\ECeEq)$. 
\end{lem}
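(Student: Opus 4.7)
The plan is to prove $s=t \to g(s)=g(t)$ by structural induction on $g$, viewed as a one-holed context $g(a)$ with a distinguished free variable $a$, for which $g(s)$ and $g(t)$ arise by substituting $s$ and $t$ respectively. The base and inductive cases cover variables, function applications, and $\eps$-term formation; the whole force of the statement is concentrated in the last case, where the lifting of the \eps-matrix restriction makes the step trivial.

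For the base cases, if $g(a) \equiv a$ the claim is the propositional tautology $s=t \to s=t$. If $g$ does not contain $a$ (e.g.\ $g$ is another free variable or a constant), then $g(s) \equiv g(t) \equiv g$ and the claim follows from the reflexivity axiom $g=g$ together with propositional weakening. For the function application case, suppose $g(a) \equiv f(g_1(a),\ldots,g_n(a))$. By the induction hypothesis we obtain $s=t \to g_i(s)=g_i(t)$ for every $i < n$; combining these with propositional reasoning yields $s=t \to \bigwedge_{i<n} g_i(s)=g_i(t)$, i.e.\ $s=t \to \vec{g}(s) = \vec{g}(t)$, and then an application of the congruence axiom $\vec{u}=\vec{v} \to f\vec{u}=f\vec{v}$ from \EQ together with modus ponens delivers $s=t \to f\vec{g}(s) = f\vec{g}(t)$, as desired.

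The key case is $g(a) \equiv \eps_x A(x,a)$, where $a$ may occur throughout $A$. This is precisely the case that the matrix restriction would block, because in general neither the proper subterms of $\eps_x A(x,s)$ are just free variables, nor do $s$ and $t$ sit at the top level of the arguments of an \eps-matrix. However, inside ${\EC'}_\eps^=$ the semicolon notation refers to arbitrary \eps-terms, so the axiom schema directly yields
\begin{equation*}
  s=t \to \eps_x A(x;s) = \eps_x A(x;t) \tkom
\end{equation*}
which is exactly $s=t \to g(s)=g(t)$. No induction on the internal structure of $A$ is needed at this step.

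I do not expect a genuine obstacle: the proof is a routine structural induction whose only non-trivial ingredient is the unrestricted \eps-equality axiom, and the main point worth emphasising is that this very immediacy---together with Theorem~\ref{t:yukami}---is what makes dropping the matrix restriction problematic in terms of Herbrand complexity. A minor bookkeeping point is to formulate the induction on contexts so that the distinguished variable $a$ is not captured by an intervening binder; this is handled by the usual $\alpha$-renaming conventions already fixed in Section~\ref{s:epsilon-calculus} and can be dispatched in a line.
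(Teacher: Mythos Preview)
Your inductive proof is correct for the lemma as stated, but the paper takes a different and rather slicker route. Instead of induction on the structure of $g$, the paper gives a direct constant-size derivation: from the critical formulas
\[
g(s)=g(s) \to \eps_x(x=g(s)) = g(s), \qquad g(t)=g(t) \to \eps_x(x=g(t)) = g(t)
\]
together with reflexivity one obtains $\eps_x(x=g(s)) = g(s)$ and $\eps_x(x=g(t)) = g(t)$; then the single unrestricted $\eps$-equality instance
\[
s=t \to \eps_x(x=g(s)) = \eps_x(x=g(t))
\]
combined with symmetry and transitivity yields $s=t \to g(s)=g(t)$. The paper stresses that $\eps_x(x=g(s))$ is \emph{not} an $\eps$-matrix, which is the crux of the matter.

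What the paper's argument buys is uniformity: the derivation uses a fixed number of steps and a fixed critical count (two critical formulas, one $\eps$-equality formula) regardless of how complicated $g$ is. This is exactly the feature that makes the connection to Yukami's trick bite, since the identity instances needed for the terms $r_1$, $r_2$---whose size grows with $k$---are then obtainable with proofs of constant length, keeping the overall derivation of $0^k = 0$ uniform. Your inductive proof has length proportional to the structural complexity of $g$; it establishes the bare derivability the lemma asserts, and for the specific $\eps$-free terms arising in Yukami's trick the critical count is even zero, but it does not by itself display the constant-size phenomenon that the paper is isolating.
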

\begin{proof}
  Let $s,t \in L(\ECeEq)$. Consider the following two critical axioms:
  \begin{equation*}
    g(s)=g(s) \impl \eps_x (x=g(s)) = g(s) \qquad g(t)=g(t) \impl \eps_x (x=g(t)) = g(t) 
  \end{equation*}
  Thus ${\EC'}_\eps^=$ derives (i) $\eps_x (x=g(s)) = g(s)$ as well as (ii) $\eps_x (x=g(t)) = g(t)$.
  We exploit the following \eps-equality axiom in~${\EC'}_\eps^=$
  \begin{equation}
    \label{eq:1}
    s=t \impl \eps_x (x=g(s)) = \eps_x (x=g(t))
    \tpkt
  \end{equation}
  Assuming $s=t$, we can thus derive (within ${\EC'}_\eps^=$) $\eps_x (x=g(s)) = \eps_x (x=g(t))$. Due
  to (i) and (ii) and equality axioms $\EQ$, we thus obtain $g(s) = g(t)$ in ${\EC'}_\eps^=$ as claimed.
  It is important to emphasise, that the $\eps$-term employed in~\eqref{eq:1} is not an $\eps$-matrix.
\end{proof}

Before we can employ Yukami's trick and the above lemma, we need some preparatory definitions and results.

Let $T$ be a theory. We say $T$ \emph{admits Herbrand's theorem} if whenever $T \vdash \exists \vec{x}\ E(\vec{x})$, with $E(\vec{x})$ quantifier-free, then there exists a finite sequence of terms \(\vec{t}_0, \vec{t}_1, \ldots, \vec{t}_{n}\) such that $T \vdash \bigvee_{i=0}^n E(\vec{t}_i)$.
%

Let $T$ be axiomatised by purely universal formulas. Then it is well-known that $T$ admits Herbrand's theorem, cf.~\cite{Buss:1998}. 
Due to Theorem~\ref{t:Herbrand} we can even conclude the existence of a function $f \colon \mathbb{N} \to \mathbb{N}$ such that $n \le f(k)$, where $k$ denotes the critical count of the proof of $T \vdash \exists \vec{x}\ E(\vec{x})$.

The next result improves upon this, in the sense that we also bound the term complexity of the sequence of  terms $\vec{t}_i$ in the critical count. Let $\depth(t)$ denote the \emph{depth} of any term $t \in L(T)$, defined in the usual way. Futher let $\frmcomp(F)$ denote the \emph{formula complexity} of an formula $F$ in the language of $T$. A variant of the following result is due to Krajicek and Pudlak (see~\cite{KP:1988}). 

\begin{thm}
\label{t:term_complexity}  
Suppose $T$ is a universal theory  such that $T \vdash_\pi \exists \vec{x} E(\vec{x})$ so that the underlying equational theory of $T$ (if any), has positive unification type.
Then there exists a primitive recursive function $g$ and a finite sequence of terms $\vec{t}_i$ such that $T \vdash \bigvee_{i=0}^n E(\vec{t}_i)$, where
$n, \depth(t_i) \le g(\cc(\pi),\frmcomp(E(\vec{x})))$.
\end{thm}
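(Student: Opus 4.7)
The plan is to reduce to Theorem~\ref{t:Herbrand} and to upgrade its conclusion with an additional depth bound on the witness terms. Because $T$ is axiomatised by formulas $\forall \vec{y}_i\, A_i(\vec{y}_i)$ with $A_i$ quantifier-free, the proof $\pi$ of $T \vdash \exists \vec{x}\, E(\vec{x})$ transforms into a $\PCEq$-proof of the prenex existential formula $\exists \vec{y}_1 \cdots \exists \vec{y}_k\, \exists \vec{x}\, \bigl(\neg A_1(\vec{y}_1) \lor \cdots \lor \neg A_k(\vec{y}_k) \lor E(\vec{x})\bigr)$ whose critical count is linear in $\cc(\pi)$ (the universal axioms contribute only $\allelim$-instances). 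Applying Theorem~\ref{t:Herbrand} produces a Herbrand disjunction of length $n$ bounded hyperexponentially, hence primitive recursively, in $\cc(\pi)$ and $\frmcomp(E(\vec{x}))$; the auxiliary disjuncts of the form $\neg A_i(\vec{s})$ are discharged via the $T$-axioms, yielding the claimed bound on $n$ for $E$ itself.

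For the depth bound, I would trace the construction of the witness terms through the $\eps$-elimination procedure underlying Theorem~\ref{t:Herbrand}. At each elimination step a maximal critical $\eps$-term $e$ of rank $r$ is replaced by terms appearing in the critical formulas belonging to $e$, so the depth of each replacement is bounded by $\depth(e)$ plus the maximum depth already present in the proof. Iterating over $r \le \rk(\pi)$ and over the $\order(\pi, r)$ critical $\eps$-terms of each rank, the depth grows by a primitive recursively bounded factor per rank level, and this gives a primitive recursive bound on $\depth(t_i)$ in the same parameters $\cc(\pi)$ and $\frmcomp(E(\vec{x}))$.

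The role of the positive unification type is to guarantee that any equational reasoning used in assembling the syntactic Herbrand disjunction can be realised via a finite complete set of most general unifiers computable with primitive recursive complexity in the size of the unified terms. In particular, the depth of the resulting unifiers remains primitive recursively bounded, so the overall depth estimate survives the equational normalisation intact; without this hypothesis, undecidable unification or unifiers of non-primitive-recursive size would in general obstruct the depth bound.

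The principal obstacle is the iterated-substitution blow-up in $\eps$-elimination: each round can embed previously substituted terms into new positions, and a naive analysis quickly escapes into non-elementary bounds. The key is to reuse the rank-by-rank reduction scheme of Moser and Zach, where the depth after eliminating all critical $\eps$-terms of a given rank is controlled by a primitive recursive function of the depth before. Composing this rank-wise growth with the unification bound afforded by the positive unification type yields the required primitive recursive function $g$.
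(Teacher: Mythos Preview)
Your reduction to Theorem~\ref{t:Herbrand} for bounding $n$ matches the paper's, but your route to the depth bound diverges substantially. You propose to trace term depths directly through the $\eps$-elimination procedure, bounding growth rank by rank. The paper instead follows a Kraj{\'i}{\v c}ek--Pudl{\'a}k style argument (cf.~\cite{KP:1988}): once $n$ is fixed, the existence of \emph{some} witness terms $\vec{t}_i$ making $Ax \to \bigvee_{i=0}^n E(\vec{t}_i)$ a quasi-tautology is encoded as a unification problem over the equational theory of $T$. The positive unification type then guarantees a most general unifier whose range has depth bounded by a function of the input size alone, i.e., of $n$ and $\frmcomp(E(\vec{x}))$. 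So the depth bound in the paper does not come from analysing the particular terms produced by $\eps$-elimination, but from replacing them wholesale by unifier-produced terms of controlled depth.

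This also explains where your account of the positive unification type goes astray: it is not a side condition ensuring that ``equational normalisation'' of already-computed witnesses stays small; it is the central mechanism that delivers the depth bound in the first place. Your direct-tracing approach might in principle yield a primitive recursive depth bound, but the sketch is incomplete: the Moser--Zach analysis you invoke controls the \emph{length} of the Herbrand disjunction, not the depths of the terms appearing in it, and the iterated substitutions in the elimination procedure can compound depth in ways you have not actually bounded. The paper's unification reformulation sidesteps that analysis entirely---which is the payoff of its approach.
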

\begin{proof}
  Wlog.\ we assume that $T$ is axiomatised by quantifier-free formulas. As $\exists \vec{x}\ E(\vec{x})$ is provable in $T$, there exists a conjunction of (quantifier-free) axioms $Ax$ in $T$ such that
  $\PCEq \vdash Ax \impl \exists \vec{x} E(\vec{x})$.
  By the above, we conclude the existence of terms $\vec{t}_i$ and a primitive recursive function $f\colon \mathbb{N} \to \mathbb{N}$ with $n \le f(k)$, such that
  \begin{equation}
    \label{eq:term_complexity}
    Ax \impl \bigvee_{i=0}^n E(\vec{t}_i) \tkom
  \end{equation}
  is a consequence of the underlying equational theory of $T$, if any. 
  As above, $k$ denotes the critical count of $\pi$.
  It remains to prove the existence of the bounding function $g$, bounding not only the number of terms $t_i$, but also their term depth.

  It is not difficult to formulate the property that the formula~\eqref{eq:term_complexity} is quasi-tautology as a unification
  problem $U$ over the corresponding equational theory, cf.~\cite{KP:1988,Pudlak:1998}.
  As $U$ is solvable there exists a most general unifier $\rho$ such that $(A \impl U)\rho$
  is a quasi-tautology, that is follows from the equational theory of $T$. This follows as unification has a positive unification type by assumption.
  For each term $s$ in the range of $\rho$, $\depth(s)$ is bounded
  by a (monotone) function $h \colon \mathbb{N} \times \mathbb{N} \to \mathbb{N}$, depending only on the input to the unification problem, that is, the length $n$ of the term sequence $\vec{t}_0, \vec{t}_1, \ldots, \vec{t}_{n}$ and the formula complexity of~$E(\vec{x})$.
  
  Finally, it is easy to see how to define a bounding function $g$ such that
  (i) $n \le f(\cc(\pi)) \le g(\cc(\pi),\frmcomp(E(\vec{x})))$ and
  (ii) $\depth(s) \le h(f(\cc(\pi)),\frmcomp(E(\vec{x}))) \le g(\cc(\pi),\frmcomp(E(\vec{x})))$.
\end{proof}

We say a theory $T$ that \emph{admits bounded Herbrand complexity} if whenever $T \vdash_\pi \exists \vec{x}\ E(\vec{x})$, with $E(\vec{x})$ quantifier-free, there exist terms $\vec{t}_i$ such that $T \vdash \bigvee_{i=0}^n E(\vec{t}_i)$ and $n$ is bounded by a function depending only on the formula complexity of $E$ and the number of steps in $\pi$. Note that according to the theorem
any universal theory so that the underlying equational theory of $T$ (if any), has positive unification type admits bounded Herbrand complexity.

The next results clarifies that no theory $T$ can exists that admits bounded Herbrand complexity, while at the same time deriving the assumptions of Theorem~\ref{t:yukami}. 

\begin{cor}
\label{c:yukami}
Let $T$ be a universal theory whose axioms include (i) $0 + 0 = 0$, (ii) $\forall x,y,z \ x = y \land y=z \impl x=z$, and (iii) $\forall x,y \ x + y = y \impl x = 0$ and
let the equational theory of $T$ (if any) be of positive unification type.

Such a theory $T$ cannot admit bounded Herbrand complexity and at the same time derives the restricted identity schema~\eqref{eq:restr_identity}. 
\end{cor}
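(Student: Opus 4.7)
The plan is a proof by contradiction. Suppose $T$ satisfies the hypotheses of the corollary and, in addition, both admits bounded Herbrand complexity and derives the restricted identity schema~\eqref{eq:restr_identity}. By Theorem~\ref{t:yukami}, combining the derived restricted identity with the axioms (i)--(iii) yields, uniformly in $k \ge 0$, a $T$-proof $\pi_k$ of the closed equation $0^k = 0$ whose critical count is bounded by an absolute constant $c$ independent of $k$; the key quantitative input from Yukami is that only two applications of~\eqref{eq:restr_identity} suffice regardless of $k$.

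Next I would package the $\pi_k$ into prenex existentials suitable for Herbrand analysis. A convenient choice is $F_k \equiv \exists x\,(x = 0 \wedge x = 0^k)$: appending to $\pi_k$ a single application of \exintro with witness $0$ (together with reflexivity and the symmetry of equality) produces a $T$-proof $\pi_k'$ of $F_k$ with $\cc(\pi_k') \le c + 1$, still uniformly in $k$. Because $T$ is a universal theory whose equational theory has positive unification type, Theorem~\ref{t:term_complexity} now applies and furnishes a primitive recursive $g$ and, for every $k$, a Herbrand disjunction of $F_k$ realised by terms $t_0,\ldots,t_n$ with both $n$ and $\depth(t_i)$ bounded by $g(\cc(\pi_k'), \frmcomp(F_k))$.

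The main obstacle is extracting a genuine contradiction from this bound. The delicate point is that $\frmcomp(F_k)$ grows linearly in $k$ while $\cc(\pi_k')$ stays constant, so the estimate $g(c+1, \frmcomp(F_k))$ is itself $k$-dependent and does not directly collide with Yukami's constant-cost construction. To close the argument I would refine the test family into a single $k$-independent existential sentence whose Herbrand witnesses are forced, by axiom (iii) combined with the Yukami-produced collapses $0^k = 0$, to distinguish unboundedly many equivalence classes in the free equational model of $T$. Since the positive unification type constrains such witnesses to a finite set of bounded-depth terms realising the most general unifier, these cannot simultaneously stand in for $0^k$ across all $k$, yielding the desired contradiction. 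Calibrating this final step---turning the uniformity of Yukami's derivation into a genuine violation of the Herbrand depth bound via the interaction between axiom (iii) and the equational identification $0^k \sim 0$---is the crux of the proof.
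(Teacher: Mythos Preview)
Your proposal has a genuine gap at the packaging step, and the repair you sketch is not carried out. By introducing $F_k \equiv \exists x\,(x = 0 \wedge x = 0^k)$ you embed the depth-$k$ term $0^k$ directly into the test sentence; as you yourself note, this forces $\frmcomp(F_k)$ to grow with $k$, so the bound $g(c+1,\frmcomp(F_k))$ from Theorem~\ref{t:term_complexity} is $k$-dependent and yields no contradiction. (There is also no contradiction from the disjunction length: $F_k$ has essentially a single Herbrand witness, namely $0$.) Your final paragraph proposes to replace the family $\{F_k\}$ by ``a single $k$-independent existential sentence'' whose witnesses are forced to separate the classes $0^k$, but you do not produce such a sentence, and it is not evident one exists in the language at hand.

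The paper avoids this obstacle by Herbrand-izing the \emph{axioms} rather than the conclusion. Instead of wrapping $0^k = 0$ in an existential, it keeps $0^k = 0$ as a quantifier-free target and pulls the finitely many universal axioms $A_1,\dots,A_\ell$ actually used in the Yukami derivation into the antecedent, obtaining $\PCEq$-proofs of
\[
\forall \vec{x}\,\bigl(A_1(\vec{x}) \wedge \cdots \wedge A_\ell(\vec{x})\bigr) \to 0^k = 0
\]
with constant critical count. Arguing as in Theorem~\ref{t:term_complexity} together with the bounded-Herbrand-complexity hypothesis then gives, for each $k$, a quasi-tautology
\[
\bigwedge_{i=0}^{n}\bigl(A_1(\vec{t}_i) \wedge \cdots \wedge A_\ell(\vec{t}_i)\bigr) \to 0^k = 0
\]
in which the Herbrand terms $\vec{t}_i$ instantiate the axiom variables and have bounded depth. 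The contradiction is now immediate: a fixed finite stock of bounded-depth instances of (i)--(iii) cannot force $0^k = 0$ for arbitrarily large $k$ as a consequence of the equality axioms alone. The idea you were missing is to let the Herbrand witnesses range over the variables of the axioms, where the formula shape is fixed, rather than over an artificial existential that already contains $0^k$.
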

\begin{proof}
  Suppose to the contrary, a theory $T$ exists whose underlying equational theory has positive unification type. Moreover $T$ admits bounded Herbrand complexity and derives the assumed axioms. 
  Thus due to Theorem~\ref{t:yukami}, $T$ uniformly derive $0^k = 0$ for all $k\in\mathbb{N}$.
  Further for any $k$, there exists a finite set $\{A_1,\dots,A_\ell\}$ of (universally quantified) axioms of $T$, in particular including axioms (i)---(iii), such that 
  \begin{equation*}
    \forall \vec{x} (A_1(\vec{x}) \land \cdots \land A_\ell(\vec{x})) \impl 0^k = 0 \tpkt
  \end{equation*}
  is provable in $\PCEq$ with proofs of constant critical count. Arguing as in the proof of Theorem~\ref{t:term_complexity}, employing the assumption that $T$ that admits bounded
Herbrand complexity, we conclude the existence of terms $\vec{t}_i$ of bounded depth such that
  \begin{equation}
    \label{eq:yukami}
    \bigwedge_{i=0}^n (A_1(\vec{t}_i) \land \cdots \land A_\ell(\vec{t}_i)) \impl 0^k = 0 \tkom
  \end{equation}
  is a quasi-tautology. As the depth of the terms $\vec{t}$ is bounded, while $k$ is unbounded.
  This is absurd. Contradiction to the assumption that $T$ that admits bounded Herbrand complexity.   
\end{proof}

Finally, we arrive at the main result of this section, emphasising the need for restricting the
use of $\eps$-matrices in the epsilon calculus with equality, cf.~Definition~\ref{d:ECeEq}.

\begin{thm}
  Let $T$ be finitely axiomatised by the axioms (i) $0 + 0 = 0$, (ii) $\forall x,y,z \ x = y \land y=z \impl x=z$, and (iii) $\forall x,y \ x + y = y \impl x = 0$ and formalised over ${\EC'}_\eps^=$. Then $T$ cannot admit bounded Herbrand complexity.
\end{thm}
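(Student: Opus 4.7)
The plan is to combine Lemma~\ref{l:yukami}, Theorem~\ref{t:yukami}, and the argument used in Corollary~\ref{c:yukami}. The present situation is in fact strictly stronger than that of Corollary~\ref{c:yukami}: there, the derivability of the restricted identity schema~\eqref{eq:restr_identity} was an additional hypothesis on the theory; here, because $T$ is formalised over ${\EC'}_\eps^=$, Lemma~\ref{l:yukami} gives the full identity schema automatically and with constant critical count per instance, namely two critical formulas together with a single \eps-equality axiom.

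First I would invoke Lemma~\ref{l:yukami} to obtain, for arbitrary terms $s, t$ and any term $g$ in $L(\ECeEq)$, a $T$-derivation of $s = t \impl g(s) = g(t)$ whose critical count is a fixed constant independent of $s$, $t$ and $g$. In particular the restricted scheme~\eqref{eq:restr_identity} is uniformly derivable. Feeding these derivations into the uniform construction of Theorem~\ref{t:yukami}, and exploiting that axioms (i)--(iii) belong to $T$, yields for every $k \in \mathbb{N}$ a $T$-proof of $0^k = 0$ whose critical count is bounded by an absolute constant independent of $k$.

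Next, suppose for contradiction that $T$ admits bounded Herbrand complexity. Then the argument of Corollary~\ref{c:yukami} goes through almost unchanged: there is a finite set $\{A_1,\dots,A_\ell\}$ of quantifier-free kernels of axioms of $T$ such that $\PCEq$ proves $\forall \vec{x}(A_1(\vec{x}) \wedge \cdots \wedge A_\ell(\vec{x})) \impl 0^k = 0$ with critical count that does not depend on $k$. Prenexing this implication and applying Herbrand's theorem together with the bounded-Herbrand-complexity assumption produces a constant number of term vectors $\vec{t}_i$ of constantly bounded depth such that
\begin{equation*}
  \bigwedge_{i=0}^n \bigl(A_1(\vec{t}_i) \wedge \cdots \wedge A_\ell(\vec{t}_i)\bigr) \impl 0^k = 0
\end{equation*}
is a quasi-tautology of the underlying equational theory, with the bounds depending only on the critical count and on the formula complexity of the axioms of $T$, and hence independent of $k$.

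The main obstacle, and where I expect the most care to be required, is the final contradiction: the left-hand side of the above implication mentions only terms of depth bounded by a constant, while $0^k$ has depth growing linearly in $k$, so for sufficiently large $k$ the equation $0^k = 0$ cannot be a consequence of those bounded-depth instances of the axioms of $T$. Making this rigorous is precisely the role played by the positive unification-type argument in the proof of Theorem~\ref{t:term_complexity}, which pins the depth of the terms appearing in the relevant most general unifier down to a bound independent of $k$. This contradiction rules out bounded Herbrand complexity for $T$ and completes the proof.
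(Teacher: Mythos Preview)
Your proposal is correct and follows the same route as the paper: invoke Lemma~\ref{l:yukami} to obtain the restricted identity schema inside ${\EC'}_\eps^=$, then reduce to the situation of Corollary~\ref{c:yukami}. The paper's proof is terser---it simply cites Corollary~\ref{c:yukami} directly rather than unpacking its argument as you do---and it makes one point explicit that you leave implicit: the equational theory of $T$ is just syntactic equality, so the unification type is~$1$ and the hypothesis of Theorem~\ref{t:term_complexity} (and hence of Corollary~\ref{c:yukami}) is satisfied. You gesture at this when you refer to ``the positive unification-type argument'', but you should state outright why it applies here.
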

\begin{proof}
  Suppose to the contrary that $T$ admits bounded Herbrand complexity, that is,
  whenever $T \vdash_\pi \exists \vec{x}\ E(\vec{x})$, with $E(\vec{x})$ quantifier-free, there exists terms $\vec{t}_i$ such that $T \vdash \bigvee_{i=0}^n E(\vec{t}_i)$ and $n$ is bounded by a
  function depending only on the formula complexity of $E$ and the number of steps in $\pi$.
  
  Further, as $T$ is axiomatised over ${\EC'}_\eps^=$, $T$ derives the restricted identity schema~\eqref{eq:restr_identity}, cf.~Lemma~\ref{l:yukami}, 
  
  Finally, as the equational theory of $T$ is restriced to syntactic equality, the corresponding unification type is $1$ and the most general unifier of any unification problem is uniquely defined. But this contradicts Corollary~\ref{c:yukami}, stating that no such theory can exists. 
\end{proof}

\section{First and Second Epsilon Theorems}
\label{s:first-and-second-epsilon-theorems}

Assume there is a proof in \ECeEq of a formula \(E\) which is free
from bound variables.  \emph{First epsilon theorem} states that there
is an \ECEq-proof of the same formula \(E\).  The proof of the theorem
is due to the \emph{epsilon elimination method}, which is to replace
critical \eps-terms in a given \ECeEq-proof by other terms,
maintaining the correctness of the proof, so that all critical
formulas and \eps-equality formulas in the proof are eliminated.
Before we go into the general case, we sketch how epsilon elimination
works through very simple examples.  The first part is for the case
only a critical formula belongs to a critical \eps-term, and the
second one is the case both a critical formula and an \eps-equality
formula belong to a critical \eps-term.

\begin{ex}
Consider an \ECe-proof \(\pi\) of the bound variable free formula \(E\)
involving only one critical formula of the following form.
\begin{align*}
  & A(t) \to A(\eps_x A(x))
\end{align*}
We eliminate this critical formula by generating two proofs of \(A(t)
\to E\) and of \(\neg A(t) \to E\) as follows.  Assume \(A(t)\) is an
axiom and replace all occurrences of \(\eps_x A(x)\) throughout the
proof by \(t\), then the above formula goes to \(A(t\{\eps_x A(x)/t\})
\to A(t)\), which is provable as \(A(t)\) is our axiom.  All the other
formulas in \(\pi\) are either propositional tautology or modus
ponens, hence we get a proof of \(A(t) \to E\) without using any
axiom.  On the other hand, assuming \(\neg A(t)\) is an axiom, the
above critical formula is proved due to ex falso quodlibet, hence we
get a proof of \(\neg A(t) \to E\).  Composing the above two proofs by
means of excluded middle \(A(t) \lor \neg A(t)\), we obtain a proof of
\(E\) without a critical formula, hence it is an \EC-proof of \(E\).
\end{ex}
\begin{ex}
Consider another \ECeEq-proof of the bound variable free formula \(E\)
involving one critical formula and one \eps-equality formula of the
following form, assuming \(u\) and \(v\) are \eps-free terms.
\begin{align*}
  & P(t; u) \to P(\eps_x P(x; u); u) \\
  & u=v \to \eps_x P(x; u) = \eps_x P(x; v)
\end{align*}
We first eliminate this \eps-equality formula by generating two proofs
concluding \(u=v \to E\) and \(\neg u=v \to E\), where we still use a
critical formula which is eliminated due to the above argument.
Assume \(u=v\) is an axiom and replace \(\eps_x P(x; u)\) throughout
the proof by \(\eps_x P(x; v)\).  The critical formula goes to \(P(t';
u) \to P(\eps_x P(x; v); u)\), where \(t' := t\{\eps_x P(x; u)/\eps_x
P(x; v)\}\), which is proved by means of another critical formula
\(P(t'; v) \to P(\eps_x P(x; v); v)\) and the identity formulas
\(P(t'; u) \to P(t'; v)\) and \(P(\eps_x P(x; v); v) \to P(\eps_x P(x;
v); u)\) from the axiom \(u=v\).  The \eps-equality formula goes to
\(u=v \to \eps_x P(x; v) = \eps_x P(x; v)\) which is trivially true,
and the identity formulas are trivially true, too.  As all the other
formulas are either propositional tautology or modus ponens, hence we
eliminate the critical formula to have an \ECEq-proof of \(u=v \to
E\).  On the other hand, assuming \(\neg u=v\) is an axiom, the above
\eps-equality formula becomes trivially provable, hence by the
previous argument we get an \ECEq-proof of \(\neg u=v \to E\).
Composing those two proofs using excluded middle \(u=v \lor \neg
u=v\), we get an \ECEq-proof of \(E\) involving a critical formula,
which is eliminated by the previous argument.
\end{ex}
In the rest of this section, we address the general case, namely, the
epsilon elimination method for proofs arbitrarily involving critical
formulas and \eps-equality formulas.  In case there are at least two
different critical \eps-terms in a proof, we have to give a right
order to eliminate the critical \eps-terms, so that the above strategy
works successfully.  The following scenario illustrates an
unsuccessful case.  Consider a proof involving two different critical
formulas.
\begin{align*}
  & A(t, s) \to A(\eps_x A(x, s), s) \\
  & B(\eps_x A(x, s)) \to B(\eps_x A(x, \eps_y B(\eps_x A(x, y))))
\end{align*}
If we first try to eliminate \(\eps_x A(x, s)\) by substituting \(t\), the second formula
goes to the following non-critical formula which is in general not provable.
\begin{align*}
  & B(t) \to B(\eps_x A(x, \eps_y B(\eps_x A(x, y))))
\end{align*}

The following lemmas tell us about substitutions for a non-critical
\eps-term in critical and \eps-equality formulas.  As a consequence,
by eliminating a critical \eps-term \(e\) of the greatest rank,
critical and \eps-equality formulas not belonging to \(e\) are kept to
be critical and \eps-equality formulas after replacing \(e\) by any
term.

\begin{lem} \label{l:non-critical-substitution-1}
  Assume \(e\) is an \eps-term, \(\eta\) is a substitution \(\{e/t\}\)
  where \(t\) is some term, and \(A\) is a critical formula with
  \(\rk(A) \le \rk(e)\).  If \(A\) does not belong to \(e\), \(A\eta\)
  is a critical formula of the rank \(\rk(A)\).
\end{lem}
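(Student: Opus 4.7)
The plan is to unpack $A$ as a critical formula $B(s) \to B(\eps_x B(x))$, so that its critical \eps-term is $e' := \eps_x B(x)$ and $\rk(A) = \rk(e')$. The hypothesis that $A$ does not belong to $e$ then states precisely that $e$ is not $\alpha$-equivalent to $e'$. Setting $B'(a) := B(a)\eta$ and $s' := s\eta$, I would identify $A\eta$ with the critical formula $B'(s') \to B'(\eps_x B'(x))$, and then verify the rank equation $\rk(\eps_x B'(x)) = \rk(e')$.

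The first half is a direct computation from the substitution definition: after $\alpha$-renaming $x$ to be fresh for $e$ and $t$, substitution commutes with $\eps$-binding and with the logical connectives, so $(\eps_x B(x))\eta = \eps_x B'(x)$ and $B(r)\eta = B'(r\eta)$ for any term $r$. Hence $A\eta = B'(s\eta) \to B'(\eps_x B'(x))$ has exactly the syntactic shape of a critical formula; in particular the parametric slot $a$ of $B(a)$ survives because $\eta$ rewrites an \eps-term rather than a free variable.

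The substantive part is the rank equation, and this is where the hypothesis $\rk(A) \le \rk(e)$ is used. By Definition~\ref{d:rank}, every semiterm $u$ subordinating $e'$ satisfies $\rk(u) + 1 \le \rk(e') = \rk(A) \le \rk(e)$, so $\rk(u) < \rk(e)$. Since $\rk$ is monotone along the subterm relation, $e$ cannot occur as a subsemiterm of any such $u$, whence $u\eta \equiv u$. Moreover, as $x$ is fresh for both $e$ and $t$, $\eta$ introduces no new $x$-containing subsemiterms into $B(x)\eta$. Therefore the $x$-subordinating subsemiterms of $\eps_x B'(x)$ coincide exactly with those of $e'$ and have the same ranks, giving $\rk(A\eta) = \rk(\eps_x B'(x)) = \rk(e') = \rk(A)$. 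The main technical obstacle I anticipate is the bookkeeping around $\alpha$-renaming in the substitution definition; once that is handled, the rank part reduces to the monotonicity of $\rk$ on subterms together with the rank inequality.
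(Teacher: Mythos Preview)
Your argument has two genuine gaps. First, the identity $B(r)\eta = B'(r\eta)$ is not a direct structural computation: it fails whenever $e$ occurs in $B(r)$ at a position properly containing the $a$-slot, i.e., when some sub(semi)term $u$ of $B(a)$ with $a$ strictly inside satisfies $u\{a/r\} \aeq e$. Ruling this out is precisely where the rank hypothesis enters: the corresponding $u\{a/x\}$ is a subsemiterm of $B(x)$ containing $x$, hence subordinates $e' = \eps_x B(x)$, and rank-invariance (Lemma~\ref{l:rank-subst}) gives $\rk(e) = \rk(u\{a/r\}) = \rk(u\{a/x\}) < \rk(e') = \rk(A) \le \rk(e)$, a contradiction. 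The paper makes this step explicit by passing to the $\eps$-matrix form $\eps_x B(x; \vec{u})$ and arguing that every occurrence of $e$ in $A$ lies inside the witness $t$ or inside $\vec{u}$; you have asserted the conclusion of that argument as a routine substitution fact.

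Second, rank is \emph{not} monotone along the subterm relation. Take $e_0 := \eps_y R(\eps_z Q(y,z))$ of rank $2$ and $u := \eps_x P(x, w, e_0)$ of rank $1$: then $e_0$ is a proper subterm of $u$ with $\rk(e_0) > \rk(u)$. Placing this $u$ as a subordinate of $e' := \eps_w S(u, w)$ gives $\rk(e') = 2$, and with $e := e_0$ all hypotheses of the lemma are satisfied, yet $e$ occurs in the subordinating semiterm $u$ and $u\eta \not\equiv u$. What \emph{does} hold is $\rk(u\eta) = \rk(u)$, since $\eta$ replaces a term by a term and so cannot alter the subordination structure; combined with the observation that the subordinating subsemiterms of $\eps_x B'(x)$ are exactly the $\eta$-images of those of $e'$ (as neither $e$ nor $t$ contains $x$), this yields the rank equation. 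Your overall plan is therefore repairable, but the two steps you labelled as bookkeeping are where the substantive argument lives.
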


\begin{proof}
  Let \(A\) be \(B(t; \vec{u}) \to B(\eps_{x} B(x;\vec{u});
  \vec{u})\).  We first show that neither \(t\) nor \(\eps_{x}
  B(x;\vec{u})\) is a proper subterm of \(e\).  If \(e \aeq e'(t)\)
  for some \eps-term \(e'(a)\), \(B(t; \vec{u}) \aeq B'(e'(t);
  \vec{u})\) for some \(B'(a; \vec{b})\).  Then, \(e\) is
  subordinating the critical \eps-term \(\eps_{x} B(x;\vec{u}) \aeq
  \eps_x B'(e'(x); \vec{u})\) and hence \(\rk(e) < \rk(\eps_{x}
  B(x;\vec{u})) = \rk(A)\), which is contradictory.  If \(e \aeq
  e'(\eps_{x} B(x;\vec{u}))\) for some \eps-term \(e'(a)\), \(\dgr(e)
  > \dgr(\eps_{x} B(x;\vec{u}))\) holds, which is contradictory.  As
  the occurrence of \(e\) in \(A\) is as subterms among \(t\) and
  \(\vec{u}\), \(A\eta\) is \(B(t\eta; \vec{u}\eta) \to B(\eps_{x}
  B(x;\vec{u}\eta); \vec{u}\eta)\) which is a critical formula of the
  rank \(\rk(\eps_{x} B(x;\vec{u}\eta)) = \rk(\eps_x B(x; \vec{u}))\).
\end{proof}

\begin{lem} \label{l:non-critical-substitution-2}
  Assume \(e\) is an \eps-term and \(A\) is an \eps-equality formula.
  If \(A\) does not belong to \(e\), the formula \(A\{e/t\}\) for any
  term \(t\) is an \eps-equality formula of rank \(\rk(A)\).
\end{lem}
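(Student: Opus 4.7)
The plan is to adapt the strategy used for Lemma~\ref{l:non-critical-substitution-1} to the shape of an $\eps$-equality formula. First I would write $A$ in its canonical form $\vec{u} = \vec{v} \to \eps_x B(x; \vec{u}) = \eps_x B(x; \vec{v})$, singling out the underlying $\eps$-matrix $\eps_x B(x; \vec{a})$ to which both critical $\eps$-terms belong. The hypothesis that $A$ does not belong to $e$ translates into $e \not\aeq \eps_x B(x; \vec{u})$ and $e \not\aeq \eps_x B(x; \vec{v})$.

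The key step is to localise the occurrences of $e$ inside $A$: I claim that they all lie strictly inside the parameter vectors $\vec{u}$ or $\vec{v}$. By the definition of $\eps$-matrix, every proper subterm of $\eps_x B(x; \vec{a})$ is a variable; consequently, after the parameter substitution that produces $\eps_x B(x; \vec{u})$, each non-variable proper subterm of $\eps_x B(x; \vec{u})$ is already a subterm of some $u_i$, and symmetrically for $\eps_x B(x; \vec{v})$. Since the antecedent of $A$ mentions only $\vec{u}$ and $\vec{v}$, every occurrence of $e$ anywhere in $A$ is confined to positions inside $\vec{u}$ or $\vec{v}$. Applying the substitution $\eta \defsym \{e/t\}$ therefore yields
\[
  A\eta \aeq \vec{u}\eta = \vec{v}\eta \to \eps_x B(x; \vec{u}\eta) = \eps_x B(x; \vec{v}\eta)\tkom
\]
which is built from the same underlying $\eps$-matrix $\eps_x B(x; \vec{a})$ and is therefore an $\eps$-equality formula by Definition~\ref{d:ECeEq}. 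Invoking Lemma~\ref{l:rank-subst}, by which rank is stable under parameter substitution, gives $\rk(\eps_x B(x; \vec{u}\eta)) = \rk(\eps_x B(x; \vec{u}))$ and hence $\rk(A\eta) = \rk(A)$.

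The main obstacle is to justify the localisation claim above: a priori $e$ could occupy a position within $\eps_x B(x; \vec{u})$ or $\eps_x B(x; \vec{v})$ that is \emph{not} inside the parameter vector, and in that case applying $\eta$ would destroy the $\eps$-equality shape by modifying the matrix scaffolding. The $\eps$-matrix restriction is precisely what rules this out. A secondary concern is the possibility that $e$ properly contains one of the critical $\eps$-terms of $A$; however, a straightforward size argument shows that such $e$ cannot appear as a subterm of $A$ at all (tracing the containment through the parameters would force $e$ into itself), so $A\eta = A$ in that case and the conclusion is trivial.
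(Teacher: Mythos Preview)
Your core argument---that the $\eps$-matrix restriction forces every occurrence of $e$ in $A$ into the parameter vectors $\vec{u},\vec{v}$, so that $A\eta$ retains the $\eps$-equality shape with rank preserved by Lemma~\ref{l:rank-subst}---is exactly the paper's proof, only spelled out in more detail (the paper just asserts ``the substitution can change only $\vec{u},\vec{v}$''). Your final ``secondary concern'' is superfluous and its size claim is slightly off: $e$ may properly contain $\eps_x B(x;\vec{u})$ yet still occur in $A$, for instance as some $v_j$; but since any such occurrence lies inside the parameter vector, your main localisation argument already handles it and nothing is lost.
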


\begin{proof}
  Let \(A\) be \(\vec{u} = \vec{v} \to \eps_x B(x; \vec{u})=\eps_x
  B(x; \vec{v})\).  As \(e \not \aeq \eps_x B(x; \vec{u})\) and \(e
  \not \aeq \eps_x B(x; \vec{v})\), the substitution can change only
  \(\vec{u}, \vec{v}\), hence \(A\eta\) is \(\vec{u}\eta = \vec{v}\eta
  \to \eps_x B(x; \vec{u}\eta)=\eps_x B(x; \vec{v}\eta)\), which is an
  \eps-equality formula.
\end{proof}

On the other hand, elimination of one critical \eps-term may increase
the number of different critical \eps-terms.  It would be a problem if
it would increase particularly the number of ones of the greatest
rank, because then the termination of our procedure becomes a concern.
Assume a proof involving the following critical formulas belonging to
the two different critical \eps-terms of the greatest rank.
\begin{align*}
  & A(s) \to A(\eps_x A(x)) \\
  & A(t) \to A(\eps_x A(x)) \\
  & B(u, \eps_x A(x)) \to B(\eps_y B(y, \eps_x A(x)), \eps_x A(x))
\end{align*}
If we try to eliminate \(\eps_x A(x)\) first, we afterwards have three
different critical \eps-terms, \(\eps_y B(y, \eps_x A(x))\), \(\eps_y
B(y, s)\), and \(\eps_y B(y, t)\), which is more than the number we
had.  We eliminate a critical \eps-term of the greatest degree among
\eps-terms of the greatest rank, in order not to change any subterm of
critical \eps-terms of the greatest rank.

\begin{lem} \label{l:non-critical-substitution-3}
  Let \(A\) be a critical formula belonging to \(e\) and \(\eta\) be a
  substitution \(\{e'/t\}\) for an \eps-term \(e'\) with \(e \not \aeq
  e'\) and a term \(t\).  If \(\dgr(e) \le \dgr(e')\), \(A\eta\) is a
  critical formula belonging to \(e\).  The same holds for an
  \eps-equality formula \(A\) belonging to \(e_0, e_1\) and a substitution
  \(\{e'/t\}\) for an \eps-term \(e'\) with \(e_i \not \aeq e'\) and
  \(\dgr(e_i) \le \dgr(e)\) for \(i \in \{0,1\}\).
\end{lem}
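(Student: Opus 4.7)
The plan is to reduce everything to a single structural observation about degree, namely that a proper \eps-subterm strictly decreases the degree, and then read off both claims from how a critical (respectively \eps-equality) formula is built around its characteristic \eps-term.

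First, I would establish the key lemma: for any \eps-term $f$, every \eps-subterm $f'$ with $f' \not\aeq f$ that occurs as a proper subterm of $f$ satisfies $\dgr(f') < \dgr(f)$. This is immediate from the definition $\dgr(\eps_x A(x)) = \max\{\dgr(t) \mid t \text{ subterm of } A(x)\} + 1$. By contraposition, the hypothesis $\dgr(e) \le \dgr(e')$ together with $e \not\aeq e'$ yields that $e'$ does not occur, modulo $\aeq$, anywhere inside $e$.

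For the first claim, recall that $A$ has the shape $B(t_0; \vec{u}) \to B(\eps_x B(x; \vec{u}); \vec{u})$ with $e \aeq \eps_x B(x; \vec{u})$. Each $u_i$ is a subterm of $e$, so by the observation above $e'$ does not occur in any $u_i$; hence $\vec{u}\eta = \vec{u}$ and $(\eps_x B(x; \vec{u}))\eta \aeq \eps_x B(x; \vec{u})$. The only position in $A$ that $\eta$ can affect is the witness $t_0$, yielding $B(t_0\eta; \vec{u}) \to B(\eps_x B(x; \vec{u}); \vec{u})$. Since the witness slot of the critical-formula schema admits an arbitrary term, this is again a critical formula belonging to $e$.

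For the \eps-equality case, read the condition $\dgr(e_i) \le \dgr(e)$ as $\dgr(e_i) \le \dgr(e')$ (otherwise the hypothesis does not mention $e'$ at all). Writing $A$ as $\vec{u}=\vec{v} \to \eps_x B(x; \vec{u}) = \eps_x B(x; \vec{v})$ with $e_0 \aeq \eps_x B(x;\vec{u})$ and $e_1 \aeq \eps_x B(x;\vec{v})$, the preliminary observation applied to both $e_0$ and $e_1$ shows that $e'$ occurs in none of $e_0$, $e_1$, $\vec{u}$, $\vec{v}$. Consequently $A\eta \aeq A$ and the conclusion is trivial. The only genuinely delicate point, and in my view the main thing to check carefully, is to verify that the proper-subterm/degree analysis interacts correctly with $\aeq$-based substitution and with the bound variable $x$; the latter is handled by noting that $e'$ is a term (hence contains no free occurrence of $x$), so any occurrence of $e'$ as a subterm of $B(x;\vec{u})$ must lie entirely within the parameter terms $\vec{u}$, which are themselves subterms of $e$.
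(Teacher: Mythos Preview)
Your argument is correct and is essentially the same as the paper's: the single key step is that $e'$ cannot occur inside $e$ (respectively inside $e_0,e_1$) because a proper \eps-subterm has strictly smaller degree, and everything else follows. The paper's proof states exactly this degree observation and nothing more; you have simply written out the details the paper leaves implicit, including the semicolon decomposition, the fact that only the witness slot can change in the critical-formula case, and the handling of the bound variable, and you also correctly flag the evident typo $\dgr(e_i)\le\dgr(e)$ for $\dgr(e_i)\le\dgr(e')$.
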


\begin{proof}
  We prove that \(e'\) does not have an occurrence in \(e\).  Suppose
  to the contrary that \(e'\) has an occurrence in \(e\), \(\dgr(e) >
  \dgr(e')\) which contradicts the assumption \(\dgr(e) \le
  \dgr(e')\).  The case of \eps-equality formulas is trivial.
\end{proof}

By eliminating a maximal critical \eps-term, our \eps-elimination
method illustrated so far successfully decrease the number of
different critical \eps-terms.  By repeating this procedure, we can
eliminate all critical \eps-term of the greatest rank and decrease the
rank of the proof.  Finally, we eliminate all critical \eps-terms to
obtain an \ECEq-proof.

\begin{lem}\label{l:epsilon-elimination-no-eq}
  Assume \(\ECeEq \vdash_\pi E\) for \eps-free formula \(E\) and let
  \(r\) be \(\rk(\pi)\).  If \(e\) is a maximum critical \eps-term in
  \(\pi\) and no \eps-equality formula belongs to \(e\), \(\ECeEq
  \vdash_{\pi_e} E\) for some \(\pi_e\) such that \(\rk(\pi_e)\le r\) and
  \(\order(\pi_e, r) = \order(\pi, r) - 1\).
\end{lem}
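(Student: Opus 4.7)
Let $e \aeq \eps_x A(x)$ be the maximum critical \eps-term to be eliminated, and enumerate the critical formulas belonging to $e$ in $\pi$ as $A(t_1) \to A(e), \ldots, A(t_k) \to A(e)$; by the hypothesis that no \eps-equality formula belongs to $e$, these are the only axioms of $\pi$ that belong to $e$. The plan is the standard Hilbert--Bernays case analysis driven by the propositional tautology
\[
\bigvee_{j=1}^{k} A(t_j) \lor \bigwedge_{i=1}^{k} \neg A(t_i),
\]
building $k+1$ variants of $\pi$ and combining them propositionally.

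For each $j \in \{1, \ldots, k\}$ I would form $\rho_j$ by applying the substitution $\eta_j := \{e/t_j\}$ to every line of $\pi$. Substitution preserves propositional tautologies, $\EQ$ axioms, and modus ponens. By Lemma~\ref{l:non-critical-substitution-1} the critical formulas of $\pi$ not belonging to $e$ are sent to critical formulas of the same rank, and by Lemma~\ref{l:non-critical-substitution-2} the \eps-equality formulas of $\pi$ (none of which belong to $e$) are sent to \eps-equality formulas of the same rank. A critical formula $A(t_i) \to A(e)$ belonging to $e$ becomes $A(t_i\{e/t_j\}) \to A(t_j)$, which is provable in one propositional step from the added hypothesis $A(t_j)$ (its consequent). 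Hence $\rho_j$ witnesses $A(t_j) \to E$, either by Theorem~\ref{t:deduction} or by keeping $A(t_j)$ as a local hypothesis to be discharged in the final combination.

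For the residual case I would build $\rho_0$ by leaving $\pi$ unchanged term-wise, but replacing each appeal to the axiom $A(t_i) \to A(e)$ with its two-step derivation from the added hypothesis $\neg A(t_i)$ via the propositional tautology $\neg A(t_i) \to (A(t_i) \to A(e))$. In $\rho_0$ the term $e$ still appears in the formulas, but no critical formula and (by assumption) no \eps-equality formula of $\rho_0$ belongs to $e$, so $e$ ceases to be a critical \eps-term of $\rho_0$. This yields $\bigwedge_{i=1}^{k} \neg A(t_i) \to E$, and a propositional combination of the $k+1$ implications with the displayed tautology furnishes the desired proof $\pi_e$ of $E$.

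It remains to verify the complexity bounds. Rank is immediate from Lemma~\ref{l:rank-subst}: each $\eta_j$ preserves ranks and the propositional glue introduces no new \eps-terms, so $\rk(\pi_e) \le r$. The main obstacle I anticipate is the order identity $\order(\pi_e, r) = \order(\pi, r) - 1$, which forces me to check that no critical \eps-term of rank $r$ other than $e$ is destroyed or newly created in any $\rho_j$. For $\rho_j$ with $j \ge 1$, if $e'$ is another critical \eps-term of rank $r$, then $\dgr(e') \le \dgr(e)$ by the maximum-degree choice of $e$, so Lemma~\ref{l:non-critical-substitution-3} applies and guarantees that $e$ does not occur inside $e'$; consequently $\eta_j$ fixes $e'$ and maps the critical formulas belonging to $e'$ to critical formulas still belonging to $e'$. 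Critical \eps-terms of lower rank may be deformed by $\eta_j$, but Lemma~\ref{l:rank-subst} keeps them at lower rank, so they do not contribute to $\order(\rho_j, r)$. In $\rho_0$ no substitution occurs at all. Thus the set of critical \eps-terms of rank $r$ in $\pi_e$ is exactly that of $\pi$ with $e$ removed, as required.
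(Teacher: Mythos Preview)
Your proposal is correct and follows essentially the same route as the paper: the Hilbert--Bernays case split on $\bigvee_j A(t_j) \lor \bigwedge_i \neg A(t_i)$, the substituted proofs $\rho_j$ for the positive cases, the unsubstituted proof $\rho_0$ for the negative case, and the same invocations of Lemmas~\ref{l:non-critical-substitution-1}, \ref{l:non-critical-substitution-2}, \ref{l:non-critical-substitution-3} to control the remaining critical and \eps-equality formulas. If anything, your verification of the order identity $\order(\pi_e,r)=\order(\pi,r)-1$ is spelled out more carefully than in the paper.
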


\begin{proof}
  Let \(e\) be \(\eps_x A(x; \vec{v})\) and \(w\) be
  \(\width_\pi(e)\).  All the critical formulas belonging to \(e\) in
  \(\pi\) can be listed by \(A(t_i; \vec{v}) \to A(\eps_x A(x;
  \vec{v}); \vec{v})\) for \(i< w\).  We form the proofs \(\bar\pi\)
  and \(\pi_i\) for \(i< w\) such that \(\vdash_{\bar\pi}
  \bigwedge_{i<w} \neg A(t_i; \vec{v}) \to E\) and \(\vdash_{\pi_i}
  A(t_i; \vec{v}) \to E\), from which \(E\) is derivable by
  propositional calculus.  In order to make \(\bar\pi\), we assume
  \(\bigwedge_{i<w} \neg A(t_i; \vec{v})\), then \(A(t_i; \vec{v}) \to
  A(\eps_x A(x; \vec{v}); \vec{v})\) is provable from \(\neg A(t_i;
  \vec{v})\) without those critical formulas belonging to \(e\), hence
  we get \(\bar\pi\) by Theorem~\ref{t:deduction}.  In order to make
  \(\pi_j\), we first replace \(e\) in \(\pi\) by \(t_j\), then prove
  \(A(t_i\{e/t_j\}; \vec{v}) \to A(t_j; \vec{v})\) for each \(i< w\) by
  assuming \(A(t_j; \vec{v})\).  We use modus ponens to the tautology
  \(A(t_j; \vec{v}) \to A(t_i\{e/t_j\}; \vec{v}) \to A(t_j; \vec{v})\)
  and \(A(t_j; \vec{v})\).  Assume \(\pi_e\) is a proof of \(E\)
  obtained by the above procedure, then it remains to prove that
  \(\rk(\pi_e) \le \rk(\pi)\) and \(\order(\pi_e, r) = \order(\pi, r)
  - 1\).  By the construction, critical formulas belonging to \(e\) in
  \(\pi\) don't remain in \(\pi_e\).  For any critical \eps-term
  \(e'\) in \(\pi\), \(\rk(e') < \rk(e)\) implies that critical and
  \eps-equality formulas belonging to \(e'\) in \(\pi\) are critical
  and \eps-equality formulas of the same rank in \(\pi_e\) due to
  Lemma~\ref{l:non-critical-substitution-1} and
  Lemma~\ref{l:non-critical-substitution-2}.  For any critical
  \eps-term \(e'\) in \(\pi\), \(\rk(e') = \rk(e)\) and \(e \not \aeq
  e'\) imply that critical and \eps-equality formulas belonging to
  \(e'\) are not affected by the substitutions for \(e\) due to
  Lemma~\ref{l:non-critical-substitution-3}.  Therefore, the critical
  \eps-term \(e\) does not occur in \(\pi_e\) anymore, \(\rk(\pi_e)
  \le r\), and \(\order(\pi_e, r) = \order(\pi, r) - 1\).
\end{proof}

Note that for \(r' < \rk(\pi)\), \(\order(\pi_e, r')\) may be larger
than \(\order(\pi, r')\) and also maximal degrees of critical and
\eps-equality formulas of rank below \(\rk(\pi)\) in \(\pi_e\) may be
strictly larger than the corresponding original ones in \(\pi\).
Also, \(\width(\pi_e, \rk(\pi))\) may be larger than \(\width(\pi,
\rk(\pi))\), because each premise of critical formulas may be changed
by the substitutions.  As we constantly decrease the order at the
greatest rank, the termination is still guaranteed.  We now study the
epsilon elimination method for \(\ECeEq\).  We use the following
lemma on the identity formula.

\begin{lem} \label{l:aux-1}
  Assume \(A(a; \vec{b})\) has exactly one occurrence of each \(b_i\)
  and for each \(b_i\), if it is a subterm of some term \(t\), \(a \in
  \NQVar(t)\) holds.  For any term \(s\) there exists \(\pi\) and
  \(\ECeEq \vdash_\pi \vec{u}=\vec{v} \to A(s; \vec{u}) \to A(s;
  \vec{v})\), such that \(\cc(\pi) \le |\vec{b}|\cdot\dgr(A(a;
  \vec{b}))\) and \(\rk(\pi) \le \rk(A(a; \vec{b}))\).
\end{lem}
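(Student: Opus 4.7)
The plan is to proceed by simultaneous structural induction, establishing in parallel: (T) for each subterm $r(a; \vec{b'})$ of $A(a; \vec{b})$ with $\vec{b'} \subseteq \vec{b}$, a proof $\pi_r$ of $\vec{u'} = \vec{v'} \to r(s; \vec{u'}) = r(s; \vec{v'})$ with $\cc(\pi_r) \le |\vec{b'}| \cdot \dgr(r)$ and $\rk(\pi_r) \le \rk(r)$; and (F) the analogous congruence for each subformula $B(a; \vec{b'})$ of $A$. The lemma is the top-level instance of (F).

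For (T) the base cases are immediate: $r = a$ yields $s = s$ by reflexivity in \EQ, and $r = b'_i$ is discharged by projecting $u'_i = v'_i$ out of the premise. For $r = f(\vec{t})$, apply the IHs on each $t_j$ and combine via the congruence axiom $\vec{s} = \vec{t} \to f\vec{s} = f\vec{t}$ from \EQ, introducing no new $\cc$. The crucial step is $r = \eps_x C(x, a, \vec{b'})$: by Lemma~\ref{l:eps-term-to-matrix} write $r \aeq g(\vec{t})$ with $g$ an \eps-matrix and $\vec{t}$ its immediate subterms; invoke one instance of the \eps-equality axiom belonging to $g$, contributing exactly $1$ to $\cc$, reducing the goal to $\vec{t}(s; \vec{u'}) = \vec{t}(s; \vec{v'})$; the equations for $a$-parameter slots of $g$ are supplied by reflexivity from \EQ, and the remaining equations by the IHs on the $t_j$'s. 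For (F), atomic cases $P\vec{t}$ and $t_1 = t_2$ combine (T)-results via the predicate/equality congruences from \EQ, while connectives and negation are handled by direct propositional combination of IHs with no additional $\cc$.

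The count is controlled by the one-occurrence condition on $\vec{b}$: whenever $r$ or $B$ decomposes into disjoint parts, each $b_i$ sits in exactly one part, so $\sum_j |\vec{b'_j}| = |\vec{b'}|$. In the function- and connective-cases, $\dgr(t_j) \le \dgr(r)$ yields $\sum_j |\vec{b'_j}| \cdot \dgr(t_j) \le |\vec{b'}| \cdot \dgr(r)$. In the \eps-case, every immediate subterm of $r$ has degree at most $\dgr(r) - 1$, so $1 + \sum_j |\vec{b'_j}| \cdot \dgr(t_j) \le 1 + |\vec{b'}|(\dgr(r) - 1) \le |\vec{b'}| \cdot \dgr(r)$ provided $|\vec{b'}| \ge 1$; when $|\vec{b'}| = 0$ the required equality reduces to reflexivity and contributes nothing. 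The rank bound follows by Lemma~\ref{l:rank-subst}: each \eps-equality axiom that is used has critical \eps-terms which are substitution instances of \eps-matrices occurring as subterms of $r$, so they share their ranks with those matrices and are therefore $\le \rk(r) \le \rk(A)$.

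The main obstacle is the bookkeeping in the \eps-case of (T): one must verify that the matrix decomposition $r \aeq g(\vec{t})$ respects the one-occurrence condition, so that the $\vec{b'_j}$'s genuinely partition $\vec{b'}$, and that the arithmetic $1 + |\vec{b'}|(\dgr(r)-1) \le |\vec{b'}| \cdot \dgr(r)$ is valid at each recursive step (with the boundary case $|\vec{b'}| = 0$ handled separately). A secondary subtlety is that multiple occurrences of $a$ in $r$ correspond to several $a$-parameter slots of $g$, but since they are all simultaneously instantiated to the same $s$, the associated premises of the \eps-equality axiom are discharged by reflexivity from \EQ and do not inflate $\cc$.
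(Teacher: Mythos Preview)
Your proposal is correct and follows essentially the same approach as the paper: structural induction on terms to establish the congruence $\vec{u'}=\vec{v'} \to r(s;\vec{u'})=r(s;\vec{v'})$ with the stated $\cc$ and $\rk$ bounds, followed by a formula-level induction to combine these via the predicate and equality congruences in $\EQ$. The paper presents this as first extracting the immediate subterms $r_i(a,\vec{b}_i)$ of $A$, doing the term induction on each $r_i$, and then a separate induction on the formula skeleton $A^*(\vec{c})$; your simultaneous induction on (T) and (F) is just a repackaging of the same argument, with the same case split (variable, function symbol via $\EQ$, $\eps$-term via one $\eps$-equality axiom plus recursion on the matrix parameters) and the same arithmetic $1+\sum_j |\vec{b'_j}|\cdot\dgr(t_j)\le |\vec{b'}|\cdot\dgr(r)$ in the $\eps$-case.
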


\begin{proof}
  Let \(r_i(a, \vec{b}_i)\) for \(0 \le i < k\) be the immediate
  subterms of \(A(a; \vec{b})\) where the list of vectors \(\vec{b}_0,
  \ldots, \vec{b}_{k-1}\) is a split of \(\vec{b}\) and
  \(\{\vec{b}_i\} \subseteq \NQVar(r_i(a, \vec{b}_i))\).  For some
  \(A^*(\vec{c}\,)\), \(A(a; \vec{b}) \aeq A^*(r_0(a, \vec{b}_0),
  \ldots, r_{k-1}(a, \vec{b}_{k-1}))\).  Assume \(\vec{u}_i\) and
  \(\vec{v}_i\) are the subvectors of \(\vec{u}\) and \(\vec{v}\)
  corresponding to \(\vec{b}_i\).  By induction on the construction of
  \(r_i(a, \vec{b}_i)\), there is a proof \(\pi_i\) of
  \(\vec{u}_i=\vec{v}_i \to r_i(s, \vec{u}_i)=r_i(s, \vec{v}_i)\) such
  that \(\cc(\pi_i) \le |\vec{b}_i| \cdot \dgr(r_i(a, \vec{b}_i))\)
  and \(\rk(\pi_i) \le \rk(r_i(a, \vec{b}_i))\) for each \(i\).  In
  case \(r_i(a, \vec{b}_i)\) is a variable \(b_{ij}\), it is trivial.
  In case \(r_i(a, \vec{b}_i)\) is a function term \(f(r_{i0}(a,
  \vec{b}_{i0}), \ldots, r_{i(l-1)}(a, \vec{b}_{i(l-1)}))\) where the
  list \(\vec{b}_{i0}, \ldots, \vec{b}_{i(l-1)}\) is a split of
  \(\vec{b}_i\), for each \(0 \le j < |\vec{b}_i| =: l\), there is a
  proof \(\pi_{ij}\) of \(\vec{u}_{ij}=\vec{v}_{ij} \to r_{ij}(s,
  \vec{u}_{ij})=r_{ij}(s, \vec{v}_{ij})\) such as \(\cc(\pi_{ij}) \le
  |\vec{b}_{ij}| \cdot \dgr(r_{ij}(a, \vec{b}_{ij}))\) and
  \(\rk(\pi_{ij}) \le r_{ij}(a, \vec{b}_{ij})\) by induction
  hypothesis.  The claim follows by \EQ as \(\cc(\pi_i) \le
  \sum_{j=0}^{l-1} |\vec{b}_{ij}| \cdot \dgr(r_{ij}(a, \vec{b}_{ij}))
  \le l \cdot \dgr(r_{i}(a, \vec{b}_{i}))\) and \(\rk(\pi_i) \le
  \max\{r_{ij}(a, \vec{b}_{ij}) \mid 0 \le j < l\} \le \rk(r_{i}(a,
  \vec{b}_{i}))\).  In case \(r_i(a, \vec{b}_i)\) is an \eps-term
  \(r_i(a, \vec{b}_i) \aeq \eps_y A_i(y; r_{i0}(a, \vec{b}_{i0}),
  \ldots, r_{i(l-1)}(a, \vec{b}_{i(l-1)}))\) where \(\eps_y A_i(y;
  \vec{c}_i)\) is an \eps-matrix, for each \(0 \le j < |\vec{b}_i| =:
  l\), there is a proof \(\pi_{ij}\) of \(\vec{u}_{ij}=\vec{v}_{ij}
  \to r_{ij}(s, \vec{u}_{ij})=r_{ij}(s, \vec{v}_{ij})\) such as
  \(\cc(\pi_{ij}) \le |\vec{b}_{ij}| \cdot \dgr(r_{ij}(a,
  \vec{b}_{ij}))\) and \(\rk(\pi_{ij}) \le \rk(r_{ij}(a,
  \vec{b}_{ij}))\) by induction hypothesis.  The claim follows by
  \eps-equality formulas as \(\cc(\pi_i) \le \left(\sum_{j=0}^{l-1}
  |\vec{b}_{ij}| \cdot \dgr(r_{ij}(a, \vec{b}_{ij}))\right) +1 \le l
  \cdot \dgr(r_{i}(a, \vec{b}_{i}))\) and \(\rk(\pi_i) \le
  \max\{r_{ij}(a, \vec{b}_{ij}) \mid 0 \le j < l\} + 1 = \rk(r_{i}(a,
  \vec{b}_{i}))\).  There is \(\pi\) due to \(\vec{\pi}\) without
  further use of critical nor \eps-equality formulas by induction on
  \(A^*(\vec{c})\), so that \(\cc(\pi) \le \sum_{i=0}^{k-1}
  |\vec{b}_i| \cdot \dgr(r_i(a, \vec{b}_i)) \le |\vec{b}| \cdot
  \dgr(A(a; \vec{b}))\) and \(\rk(\pi)\le \max\{\rk(r_{i}(a,
  \vec{b}_{i})) \mid 0 \le i < k\} \le \rk(A(a; \vec{b}))\).
\end{proof}

We deal with the case both critical formula and \eps-equality formula
belong to the same critical \eps-term.  By the next lemmas, we replace
such a critical formula and an \eps-equality formula, so that we
obtain a proof whose order of the greatest rank is strictly smaller
than the original one.

\begin{lem} \label{l:subst-CA}
  Let \(\eps_x A(x; \vec{b}) =: g(\vec{b})\) be an \eps-matrix, and
  assume \(\dgr(g(\vec{v})) \le \dgr(g(\vec{u}))\).  There is an
  \ECeEq-proof \(\pi\) from the assumption \(\vec{u}=\vec{v}\) such
  that it concludes \(A(t_i; \vec{u}) \to A(\eps_x A(x; \vec{v});
  \vec{u})\) for arbitrary terms \(\vec{t}\) for each \(i <
  |\vec{t}\,|\), and moreover the following conditions hold:
  \(\rk(\pi) = \rk(\eps_x A(x; \vec{b}))\), \(\cc(\pi) \le |\vec{b}|
  \cdot (|\vec{t}\,| + 1)\cdot \dgr(A(a; \vec{b}))+|\vec{t}\,|\), and
  \(\width(\pi, \rk(\pi)) = |\vec{t}\,|\).
\end{lem}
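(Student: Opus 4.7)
The plan is to build $\pi$ by sandwiching one critical formula belonging to $\eps_x A(x;\vec{v})$ between two applications of the identity schema supplied by Lemma~\ref{l:aux-1}. Working under the assumption $\vec{u}=\vec{v}$, for each $i < |\vec{t}\,|$ I proceed in three steps: (a) invoke Lemma~\ref{l:aux-1} with $s := t_i$ to derive $A(t_i;\vec{u}) \to A(t_i;\vec{v})$; (b) use the critical formula $A(t_i;\vec{v}) \to A(\eps_x A(x;\vec{v});\vec{v})$ belonging to $\eps_x A(x;\vec{v})$; (c) invoke Lemma~\ref{l:aux-1} once (shared across all $i$), with $s := \eps_x A(x;\vec{v})$ and using symmetry of $=$, to derive $A(\eps_x A(x;\vec{v});\vec{v}) \to A(\eps_x A(x;\vec{v});\vec{u})$. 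Chaining (a)--(c) by propositional reasoning then yields $A(t_i;\vec{u}) \to A(\eps_x A(x;\vec{v});\vec{u})$ under the assumption $\vec{u}=\vec{v}$, as required.

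For the critical-count bound, each of the $|\vec{t}\,|$ uses of (a) and the single use of (c) contributes at most $|\vec{b}|\cdot\dgr(A(a;\vec{b}))$ to $\cc$ by Lemma~\ref{l:aux-1}, totalling $(|\vec{t}\,|+1)\cdot|\vec{b}|\cdot\dgr(A(a;\vec{b}))$; adding the $|\vec{t}\,|$ critical formulas from (b) produces exactly the stated bound $|\vec{b}|\cdot(|\vec{t}\,|+1)\cdot\dgr(A(a;\vec{b})) + |\vec{t}\,|$.

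For rank and width, Lemma~\ref{l:rank-subst} and Lemma~\ref{l:rank-eps-matrix} give $\rk(\eps_x A(x;\vec{v})) = \rk(\eps_x A(x;\vec{b})) = \rk(A(a;\vec{b}))+1$, whereas Lemma~\ref{l:aux-1} guarantees that every critical and $\eps$-equality formula introduced in (a) and (c) sits at rank at most $\rk(A(a;\vec{b}))$. Therefore $\rk(\pi) = \rk(\eps_x A(x;\vec{b}))$, and at this top rank the only critical or $\eps$-equality formulas occurring are the $|\vec{t}\,|$ distinct critical formulas of (b) belonging to $\eps_x A(x;\vec{v})$, yielding $\width(\pi,\rk(\pi)) = |\vec{t}\,|$.

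The main obstacle I expect is checking the applicability of Lemma~\ref{l:aux-1} in steps (a) and (c): its hypotheses demand that each $b_i$ occur exactly once in $A(a;\vec{b})$ and that whenever $b_i$ is a subterm of some term $t$ inside $A$ one has $a \in \NQVar(t)$. Both conditions follow from the fact that $\eps_x A(x;\vec{b})$ is an $\eps$-matrix, so no extra work is needed. The hypothesis $\dgr(g(\vec{v}))\le\dgr(g(\vec{u}))$ plays no explicit role in the construction itself; I would simply carry it through, anticipating that downstream lemmas need it to ensure $\eps_x A(x;\vec{v})$ may serve as the replacement $\eps$-term without disturbing the maximality conditions enforced by the $\eps$-elimination procedure.
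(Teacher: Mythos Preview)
Your proposal is correct and follows essentially the same route as the paper: the paper also derives $A(t_i;\vec{u})\to A(t_i;\vec{v})$ via Lemma~\ref{l:aux-1}, then uses the critical formula $A(t_i;\vec{v})\to A(\eps_x A(x;\vec{v});\vec{v})$, and finally a single shared invocation of Lemma~\ref{l:aux-1} for $A(\eps_x A(x;\vec{v});\vec{v})\to A(\eps_x A(x;\vec{v});\vec{u})$, with the same critical-count, rank, and width accounting. Your remarks on why the hypotheses of Lemma~\ref{l:aux-1} are met and on the passive role of the degree assumption are accurate and slightly more explicit than the paper's own proof.
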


\begin{proof}
  For each \(i\), \(\vec{u}=\vec{v} \vdash A(t_i; \vec{u}) \to A(t_i;
  \vec{v})\) by Lemma~\ref{l:aux-1}.  On the other hand, \(\vdash
  A(t_i; \vec{v}) \to A(\eps_x A(x; \vec{v}), \vec{v})\) as it is a
  critical formula, and also \(\vec{u}=\vec{v} \vdash A(\eps_x A(x;
  \vec{v}), \vec{v}) \to A(\eps_x A(x; \vec{v}), \vec{u})\) by
  Lemma~\ref{l:aux-1}, hence we obtain \(\pi\) using the deduction
  theorem.  As \(\rk(A(a; \vec{b})) \le \rk(\eps_x A(x; \vec{v}))\),
  \(\rk(\pi) = \rk(\eps_x A(x; \vec{v}))\), \(\cc(\pi) \le |\vec{b}|
  \cdot \dgr(A(a; \vec{b})) \cdot |\vec{t}\,| + |\vec{t}\,| +
  |\vec{b}| \cdot \dgr(A(a; \vec{b}))\), and \(\width(\pi, \rk(\pi)) =
  |\vec{t}\,|\) which comes from the number of critical formulas
  belonging to \(g(\vec{b})\).
\end{proof}

\begin{lem} \label{l:subst-eps-=}
  Let \(\eps_x A(x; \vec{b}) =: g(\vec{b})\) be an \eps-matrix, and
  assume \(\dgr(g(\vec{v})) \le \dgr(g(\vec{u}))\) and
  \(\dgr(g(\vec{w})) \le \dgr(g(\vec{u}))\).  There is an \ECeEq proof
  \(\pi\) such that \(\vec{v}=\vec{u} \vdash_\pi \vec{w}=\vec{u} \to
  \eps_x A(x; \vec{v})=\eps_x A(x; \vec{w})\), \(\rk(\pi) = \rk(\eps_x
  A(x; \vec{b}))\), \(\dgr(\pi) \le \dgr(\eps_x A(x; \vec{u}))\), and
  \(\cc(\pi)=1\).
\end{lem}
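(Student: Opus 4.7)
The plan is to derive the conclusion directly using a single application of the \eps-equality axiom, preceded by purely propositional manipulations of the axioms \EQ. The key observation is that, from the hypothesis \(\vec{v}=\vec{u}\) together with the antecedent \(\vec{w}=\vec{u}\), the equality \(\vec{v}=\vec{w}\) follows using only componentwise symmetry and transitivity, both of which are schematic instances of \EQ and neither increase \cc nor introduce any \eps-term.

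Concretely, I would first invoke the \eps-equality axiom
\begin{equation*}
\vec{v}=\vec{w} \to \eps_x A(x; \vec{v}) = \eps_x A(x; \vec{w})\tpkt
\end{equation*}
This is the single initial formula accounting for \(\cc(\pi)=1\). Next, I would derive \(\vec{v}=\vec{w}\) from \(\vec{v}=\vec{u}\) (hypothesis) and \(\vec{w}=\vec{u}\) (antecedent) by propositional reasoning with \EQ: for each component index \(i\), apply symmetry to \(w_i=u_i\) to obtain \(u_i=w_i\), then transitivity with \(v_i=u_i\) to obtain \(v_i=w_i\); a final conjunction over \(i\) yields \(\vec{v}=\vec{w}\). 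Chaining modus ponens with the \eps-equality axiom gives \(\eps_x A(x; \vec{v}) = \eps_x A(x; \vec{w})\), and discharging \(\vec{w}=\vec{u}\) via the deduction theorem (Theorem~\ref{t:deduction}), which is applicable since the fragment used contains no quantifier rules, produces the stated implication.

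It then remains to verify the bounds. The only critical \eps-terms occurring in \(\pi\) are \(\eps_x A(x; \vec{v})\) and \(\eps_x A(x; \vec{w})\), both introduced by the sole \eps-equality axiom; every other line is either propositional, a member of \EQ (which contains neither \eps-terms nor quantifiers), or a modus ponens. By Lemma~\ref{l:rank-subst}, both critical \eps-terms have rank \(\rk(\eps_x A(x; \vec{b}))\), giving \(\rk(\pi)=\rk(\eps_x A(x; \vec{b}))\). The degrees of the two critical \eps-terms are \(\dgr(g(\vec{v}))\) and \(\dgr(g(\vec{w}))\), each bounded by \(\dgr(g(\vec{u}))=\dgr(\eps_x A(x; \vec{u}))\) by the hypotheses of the lemma, so \(\dgr(\pi)\le \dgr(\eps_x A(x; \vec{u}))\).

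The main (and only mild) obstacle is bookkeeping: ensuring that the unwinding of the vector equation \(\vec{v}=\vec{w}\) really stays within propositional logic and \EQ, so that no further critical or \eps-equality formula is accidentally introduced. This is immediate from the schematic form of \EQ, whose instances here contain neither \eps-terms nor quantifiers, and from the fact that the only \eps-terms appearing anywhere in \(\pi\) are the two already accounted for by the single \eps-equality axiom.
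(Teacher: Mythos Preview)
Your proposal is correct and follows essentially the same approach as the paper's proof: derive \(\vec{v}=\vec{w}\) from the hypothesis \(\vec{v}=\vec{u}\) and the antecedent \(\vec{w}=\vec{u}\) via \EQ, apply a single \eps-equality formula, and then invoke the deduction theorem. You have simply supplied more detail on the bookkeeping for the rank, degree, and critical-count bounds than the paper's terse three-line argument.
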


\begin{proof}
  Assuming \(\vec{v}=\vec{u}\) and \(\vec{w}=\vec{u}\),
  \(\vec{v}=\vec{w}\) holds.  By \eps-equality formula and modus
  ponens, \(\eps_x A(x; \vec{v})=\eps_x A(x; \vec{w})\).  Then we use
  Theorem~\ref{t:deduction}.
\end{proof}

\begin{lem} \label{l:epsilon-elimination-with-eq}
  Assume \(\ECeEq \vdash_\pi E\) for a quantifier-free \(E\) and \(e\)
  is a maximum critical \eps-term in \(\pi\), then \(\ECeEq
  \vdash_{\pi_e} E\) for some \(\pi_e\) where \(\order(\pi_e, r) =
  \order(\pi, r) - 1\) for \(r = \rk(\pi)\) and \(\rk(\pi_e) \le r\).
\end{lem}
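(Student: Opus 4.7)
The plan is to generalise the proof of Lemma~\ref{l:epsilon-elimination-no-eq} by prefacing it with an outer case split that disposes of the \eps-equality formulas belonging to \(e\). Write \(e = \eps_x A(x; \vec{v})\), and list the \eps-equality axioms belonging to \(e\) as \(\vec{u}_j = \vec{v} \to \eps_x A(x; \vec{u}_j) = e\) for \(j < \widtheq(\pi, e)\), the symmetric orientation being treated dually. By the maximality of \(e\) each companion \eps-term \(\eps_x A(x; \vec{u}_j)\) has the same rank \(r\) as \(e\) and degree at most \(\dgr(e)\); moreover each such companion is already a critical \eps-term of \(\pi\), since it is the second critical \eps-term of the corresponding \eps-equality axiom. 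I then split on the classical tautology \(\bigvee_j (\vec{u}_j = \vec{v}) \lor \bigwedge_j \neg(\vec{u}_j = \vec{v})\) and derive \(E\) on each side.

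Under the hypothesis \(\bigwedge_j \neg (\vec{u}_j = \vec{v})\), every \eps-equality axiom belonging to \(e\) is derivable \emph{ex falso} from its negated antecedent; replacing each such axiom in \(\pi\) by that derivation yields a proof \(\pi'\) of \(E\) from this hypothesis in which \emph{no} \eps-equality formula belongs to \(e\), so Lemma~\ref{l:epsilon-elimination-no-eq} applies to \(\pi'\) and removes \(e\) as a critical \eps-term. Under the hypothesis \(\vec{u}_j = \vec{v}\) for a fixed \(j\), I instead apply the substitution \(\zeta_j \defsym \{e/\eps_x A(x; \vec{u}_j)\}\) throughout \(\pi\). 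The transforms under \(\zeta_j\) of the critical formulas belonging to \(e\), of the form \(A(t_i\zeta_j; \vec{v}) \to A(\eps_x A(x; \vec{u}_j); \vec{v})\), are provable from \(\vec{u}_j = \vec{v}\) by Lemma~\ref{l:subst-CA}, whose degree precondition is guaranteed by the maximality of \(e\). The transforms of the \eps-equality formulas belonging to \(e\), namely \(\vec{u}_k = \vec{v} \to \eps_x A(x; \vec{u}_k) = \eps_x A(x; \vec{u}_j)\), are provable from \(\vec{u}_j = \vec{v}\) by Lemma~\ref{l:subst-eps-=}. Splicing in these derivations yields a proof of \((\vec{u}_j = \vec{v}) \to E\).

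Combining the derivations of \(\bigwedge_j \neg(\vec{u}_j = \vec{v}) \to E\) and of each \((\vec{u}_j = \vec{v}) \to E\) propositionally along the above tautology produces the desired \(\pi_e\). For the bounds, Lemmas~\ref{l:non-critical-substitution-1}, \ref{l:non-critical-substitution-2} and~\ref{l:non-critical-substitution-3}, together with the degree-maximality of \(e\) at rank \(r\), ensure that every critical or \eps-equality formula not belonging to \(e\) survives each substitution as a formula of the same kind and the same rank; the substitutes \(t_i\) and \(\eps_x A(x; \vec{u}_j)\) occur already in \(\pi\), and the critical axioms introduced through Lemma~\ref{l:subst-CA} belong to the already-critical \eps-term \(\eps_x A(x; \vec{u}_j)\). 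Hence no critical \eps-term of rank \(r\) is created while \(e\) itself disappears, giving \(\rk(\pi_e) \le r\) and \(\order(\pi_e, r) = \order(\pi, r) - 1\). The main obstacle is the bookkeeping around \eps-equality formulas that belong simultaneously to \(e\) and to a companion \(\eps_x A(x; \vec{u}_j)\); this is precisely what the preparatory Lemmas~\ref{l:subst-CA} and~\ref{l:subst-eps-=} were designed to handle, and why the restriction to \eps-matrices in the \eps-equality axiom is enough to make the reduction go through.
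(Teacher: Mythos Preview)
Your proposal is correct and follows essentially the same strategy as the paper: an outer case split on the antecedents of the \eps-equality axioms belonging to \(e\), with the negative branch falling back on Lemma~\ref{l:epsilon-elimination-no-eq} and each positive branch handled by the substitution \(\{e/\eps_x A(x;\vec{u}_j)\}\) together with Lemmas~\ref{l:subst-CA} and~\ref{l:subst-eps-=}. The only cosmetic differences are a swap of the roles of \(\vec{u}\) and \(\vec{v}\) relative to the paper, and that the paper additionally remarks that the auxiliary \eps-equality formulas created inside Lemma~\ref{l:subst-CA} (via Lemma~\ref{l:aux-1}) have rank strictly below \(r\); you might add this one-line observation to make the order bookkeeping airtight.
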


\begin{proof}
  If there is no \eps-equality formula belonging to \(e\), we apply
  Lemma~\ref{l:epsilon-elimination-no-eq}.  Otherwise, assume \(e\) is
  of the form \(\eps_x A(x; \vec{u})\), and let \(B_i\) be an
  \eps-equality formula \(\vec{u} = \vec{v}_i \to \eps_x A(x; \vec{u})
  = \eps_x A(x; \vec{v}_i)\) in \(\pi\) for \(i < w\) where \(w :=
  \widtheq(\pi, e)\).  We form the proofs \(\bar \pi\) of
  \((\bigwedge_{i < w} \neg \vec{u} = \vec{v}_i) \to E\) and \(\pi_i\)
  of \(\vec{u} = \vec{v}_i \to E\) for \(i < w\), from which \(E\) is
  derived, in the following procedure.  Assuming \(\bigwedge_{i < w}
  \neg \vec{u} = \vec{v}_i\), all the \eps-equality formulas belonging
  to \(e\) are provable in propositional calculus.  If there is no
  critical formulas belonging to \(e\), we already have \(\bar \pi\),
  and otherwise we further apply
  Lemma~\ref{l:epsilon-elimination-no-eq} to get \(\bar \pi\).
  Concerning \(\pi_i\), assume \(\vec{u} = \vec{v}_i\) and substitute
  \(\eps_x A(x; \vec{v}_i)\) for \(e\) throughout \(\pi\).  Then,
  critical formulas of the form \(A(t; \vec{u}) \to A(\eps_x A(x;
  \vec{u}); \vec{u})\) in \(\pi\) goes to \(A(t; \vec{u}) \to A(\eps_x
  A(x; \vec{v}); \vec{u})\) which is provable by
  Lemma~\ref{l:subst-CA}.  On the other hand, each \eps-equality
  formula \(B_i\) is provable in propositional calculus, and the other
  \eps-equality formulas \(E_j\) for \(j \neq i\) is provable by
  Lemma~\ref{l:subst-eps-=}.  Let \(\pi_e\) be a proof obtained by
  means of propositional calculus from \(\bar \pi\) and \(\vec{\pi}\).
  Since all the critical \eps-term \(e\) in \(\pi\) have been removed
  and the substitutions don't make any other critical \eps-terms
  \(e'\) which is different from \(e\) be same as \(e\), there is no
  critical \eps-term \(e\) in \(\pi_e\).  It remains to prove that the
  obtained proof \(\pi_e\) satisfies \(\order(\pi_e, r) = \order(\pi,
  r) - 1\) for \(r = \rk(\pi)\) and \(\rk(\pi_e) \le r\).  As it is
  apparent for \(\bar \pi\), we consider each \(\pi_i\).  Although
  Lemma~\ref{l:subst-CA} introduces \eps-equality formulas belonging
  to a new \eps-term, those terms are of rank strictly below \(r\).
  Any critical formula of rank \(r\) in each \(\pi_i\) belongs to
  \(\eps_x A(x; \vec{v}_i)\), which is of the same rank \(r\), occurs
  in \(\pi\), and is distinct from \(e\).  The \eps-equality formulas
  of rank \(r\) used in Lemma~\ref{l:subst-eps-=} belong to some
  critical \eps-term of rank \(r\) in \(\pi\) which is different from
  \(e\).  Therefore, \(\order(\pi_e, r) = \order(\pi, r) - 1\) and
  \(\rk(\pi_e) \le r\) hold.
\end{proof}

Repeatedly using the results so far, the rank of the proof is diminished.

\begin{lem} \label{l:decreasing}
  Assume \(\ECeEq \vdash_\pi E\) for a quantifier-free \(E\) and \(e\)
  is a maximum critical \eps-term in \(\pi\), then \(\ECeEq
  \vdash_{\rho} E\) for some \(\rho\) where \(\rk(\rho) <
  \rk(\pi)\).
\end{lem}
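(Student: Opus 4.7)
The plan is to iterate Lemma~\ref{l:epsilon-elimination-with-eq} at the current maximal rank until the order there drops to zero, at which point the rank itself must drop. I proceed by induction on $\order(\pi, r)$, where $r := \rk(\pi)$.

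Since $e$ is a maximum critical \eps-term of $\pi$ and every \eps-term has rank at least one, we have $r \ge 1$ and $\order(\pi, r) \ge 1$. In the base case $\order(\pi, r) = 1$, a single application of Lemma~\ref{l:epsilon-elimination-with-eq} to $\pi$ and $e$ produces $\pi_e$ with $\order(\pi_e, r) = 0$ and $\rk(\pi_e) \le r$; since no critical \eps-term of rank $r$ remains, $\rk(\pi_e) < r$, and setting $\rho := \pi_e$ suffices.

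In the inductive step with $\order(\pi, r) = n+1 \ge 2$, Lemma~\ref{l:epsilon-elimination-with-eq} yields $\pi_e$ with $\order(\pi_e, r) = n$ and $\rk(\pi_e) \le r$. If $\rk(\pi_e) < r$, we are finished. Otherwise $\rk(\pi_e) = r$, and because $\order(\pi_e, r) = n \ge 1$ the proof $\pi_e$ still contains a critical \eps-term of rank $r$; hence there is a maximum critical \eps-term $e'$ of $\pi_e$ (necessarily of rank $r$). Applying the induction hypothesis to $\pi_e$ and $e'$ delivers the required $\rho$ with $\rk(\rho) < r = \rk(\pi)$.

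The one point that requires attention is the choice of induction measure. Lemma~\ref{l:epsilon-elimination-with-eq} guarantees only $\rk(\pi_e) \le \rk(\pi)$ and a decrease of $\order(\cdot, r)$ at the specific rank $r = \rk(\pi)$; it makes no claim about $\order(\pi_e, r')$ for $r' < r$, and in fact such quantities may increase. The induction must therefore be keyed to $\order(\pi, \rk(\pi))$ and not to some cumulative measure. Apart from that bookkeeping, the real work has already been done in Lemma~\ref{l:epsilon-elimination-with-eq}, and this lemma is a finite iteration of it.
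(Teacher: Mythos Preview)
Your proof is correct and follows essentially the same approach as the paper: iterate the one-step elimination lemma at the maximal rank until $\order(\cdot, r)$ hits zero, whereupon the rank drops. The only cosmetic differences are that the paper writes the iteration as an explicit sequence $\pi_0,\ldots,\pi_n$ rather than as an induction, and at each step dispatches explicitly to either Lemma~\ref{l:epsilon-elimination-no-eq} or Lemma~\ref{l:epsilon-elimination-with-eq} (whereas you invoke only the latter, which already subsumes the former in its first line).
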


\begin{proof}
  We make a sequence of proofs \(\pi_0, \pi_1, \ldots, \pi_n\) for \(n
  = \order(\pi, r)\), where \(\pi_0 := \pi\) and \(r := \rk(\pi)\).
  If no \eps-equality formula belongs to the maximal critical
  \eps-term of \(\pi_{i}\), let \(\pi_{i+1}\) be a proof obtained by
  applying Lemma~\ref{l:epsilon-elimination-no-eq} to \(\pi_i\), and
  otherwise let \(\pi_{i+1}\) be a result of applying
  Lemma~\ref{l:epsilon-elimination-with-eq} to \(\pi_i\).  As in any
  case the order is decreasing, \(\order(\pi_n, r) = 0\) and hence
  \(\rk(\pi_n) < r\), therefore we let \(\rho\) be \(\pi_n\).
\end{proof}

\begin{thm}[First epsilon theorem]
  If \(E\) is a formula in \(L(\ECEq)\) and \(\ECeEq \vdash E\),
  then \(\ECEq \vdash E\).
\end{thm}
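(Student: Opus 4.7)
The plan is to apply Lemma~\ref{l:decreasing} iteratively in order to drive the rank of the proof down to zero, and then to clean up any residual $\eps$-terms that still appear in the proof. Since $E \in L(\ECEq)$, the formula $E$ is both quantifier-free and $\eps$-free, so the hypothesis of Lemma~\ref{l:decreasing} is met whenever the current proof still contains a maximum critical $\eps$-term.

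First I would proceed by induction on $\rk(\pi)$, where $\ECeEq \vdash_\pi E$. For the inductive step $\rk(\pi) > 0$, some critical $\eps$-term occurs in $\pi$ and a maximum one exists, so Lemma~\ref{l:decreasing} yields $\rho$ with $\ECeEq \vdash_\rho E$ and $\rk(\rho) < \rk(\pi)$; the induction hypothesis applied to $\rho$ then supplies the desired \ECEq-proof. For the base case $\rk(\pi) = 0$ there are no critical $\eps$-terms in $\pi$, and since by Definition~\ref{d:ECeEq} every critical formula and every $\eps$-equality formula belongs to a critical $\eps$-term, neither of those initial formulas occurs in $\pi$; the proof then uses only propositional tautologies, the equality axioms $\EQ$, and modus ponens.

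What remains is that $\pi$ may still contain $\eps$-terms syntactically, for instance inside propositional tautologies such as $A(\eps_x B(x)) \to A(\eps_x B(x))$ or inside intermediate formulas produced by modus ponens. To convert $\pi$ into a proof in the language $L(\ECEq)$, I would uniformly replace each distinct $\eps$-term still appearing in $\pi$ by a fresh free variable, matching occurrences via $\aeq$. Propositional tautologies remain tautologies under such a uniform substitution, the equality axioms remain equality axioms since they are parameterised by arbitrary terms, modus ponens is preserved, and the conclusion $E$ is untouched because $E$ is $\eps$-free. The resulting object is therefore an \ECEq-proof of $E$.

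I expect the main obstacle to be not the induction itself, which is essentially bookkeeping on top of Lemma~\ref{l:decreasing}, but the careful treatment of the base case. One must handle nested $\eps$-terms correctly, choosing substitutions that respect $\aeq$ and avoid variable capture, and verify that after the replacement every initial formula in $\pi$ still lies in the appropriate \ECEq axiom schema. These checks are routine given the conventions fixed in Section~\ref{s:epsilon-calculus}, but they are the one point where care is required beyond the straightforward invocation of Lemma~\ref{l:decreasing}.
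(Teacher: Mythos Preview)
Your proposal is correct and takes essentially the same approach as the paper: both iterate Lemma~\ref{l:decreasing} to drive the rank to zero, the paper via an explicit sequence $\pi_0,\ldots,\pi_r$ and you via induction on $\rk(\pi)$. Your treatment is in fact more careful than the paper's here, since you spell out the final replacement of residual $\eps$-terms by free variables---a step the paper omits in this proof but does mention explicitly in the closely related Theorem~\ref{t:ext-first-eps-eq}.
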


\begin{proof}
  Assume \(\ECeEq \vdash_\pi E\).
  We make a sequence of proofs \(\pi_0, \pi_1, \ldots, \pi_r\) for \(r
  = \rk(\pi)\), where \(\pi_0 := \pi\).  In case \(\rk(\pi_{i})=r-i\),
  let \(\pi_{i+1}\) be a proof obtained by applying
  Lemma~\ref{l:decreasing} to \(\pi_i\), and otherwise let
  \(\pi_{i+1}\) be \(\pi_i\).  Then \(\pi_r\) is the \ECEq-proof of \(E\).
\end{proof}

We conclude this section, giving the statement of Second epsilon
Theorem~\cite{HilbertBernays39}.  The proof is given due to the
\eps-elimination method without anything new.

\begin{thm}[Second epsilon theorem]
  If \(A\) is a formula in \(L(\PCEq)\) and \(\PCeEq \vdash A\), then
  \(\PCEq \vdash A\).
\end{thm}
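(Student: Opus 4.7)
The plan is to reduce the Second Epsilon Theorem to the First Epsilon Theorem by passing through the \eps-translation of Section~\ref{s:embedding}. Since each formula of \(L(\PCEq)\) is provably equivalent in \PCEq to a prenex normal form, I may assume without loss of generality that \(A \equiv Q_1 x_1 \cdots Q_n x_n\, M(\vec{x}\,)\) with quantifier-free matrix \(M\) and \(Q_i \in \{\forall,\exists\}\). The goal is then to produce a \PCEq-proof of this prenex \(A\) from the given \PCeEq-proof \(\pi\) of \(A\).

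First I would extend Lemma~\ref{l:embedding} from \PC/\ECe to \PCeEq/\ECeEq: the induction goes through unchanged, the \EQ-axioms translate to themselves, and each \eps-equality initial formula of \PCeEq is already an initial formula of \ECeEq, so \(\cc\) is preserved. Applying this extended embedding to \(\pi\) yields an \ECeEq-proof \(\rho\) of the \eps-translation \(A^\eps\), which is a quantifier-free formula in \(L(\ECeEq)\) whose only quantifier-like content is a nest of \eps-terms corresponding to \(Q_1 x_1 \cdots Q_n x_n\).

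Next I would apply the \eps-elimination machinery of Lemmata~\ref{l:epsilon-elimination-no-eq}, \ref{l:epsilon-elimination-with-eq}, and~\ref{l:decreasing} to \(\rho\), but treating the topmost \eps-terms corresponding to the quantifier prefix of \(A\) with care. Eliminating a topmost \eps-term \(\eps_x B^\eps(x)\) that came from an existential \(\exists x\) introduces disjunctions over Herbrand witnesses, as in the proof of Theorem~\ref{t:Herbrand}; eliminating a topmost \eps-term \(\eps_x \neg B^\eps(x)\) that came from a universal \(\forall x\) introduces a fresh free variable that will later serve as an eigenvariable. Iterating rank-by-rank down to the matrix, I obtain an \ECEq-provable disjunction \(\bigvee_{i<k} M(\vec{s}_i)\), where in each slot \(\vec{s}_i\) the coordinates arising from universal blocks are fresh variables not occurring elsewhere. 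Re-introducing the quantifiers from innermost outward by \exintro on existential coordinates and \allintro on the fresh universal ones gives a \PCEq-proof of \(A\) after contracting the disjunction.

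The main obstacle is keeping the \eps-elimination order consistent with the quantifier prefix of \(A\): an existential \eps-term dominated by a universal \eps-term depends (after embedding) on the \eps-term for the universal, so the universal's witness must still be a free variable, free for \allintro, when that step is taken at the end. Scheduling the elimination from the outside in with respect to the prefix, and invoking Lemma~\ref{l:epsilon-elimination-with-eq} when \eps-equality formulas belong to those outer \eps-terms, takes care of this. Thanks to the semicolon restriction in Definition~\ref{d:ECeEq} there is no need for the closures machinery of Hilbert and Bernays, which is why, as the authors remark, no genuinely new ingredient beyond the \eps-elimination method is required.
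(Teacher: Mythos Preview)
The paper does not spell out a proof here; it simply remarks that the result follows by the \eps-elimination method ``without anything new'' and defers to Hilbert--Bernays. Your outline is in the spirit of that classical argument, but one step does not go through as written. The lemmata you invoke (Lemmata~\ref{l:epsilon-elimination-no-eq}, \ref{l:epsilon-elimination-with-eq}, \ref{l:decreasing}) eliminate a critical \eps-term $e$ by substituting the witness terms $t_j$ drawn from the critical formulas belonging to $e$, producing a disjunction; they never manufacture a fresh free variable. If the original \PCeEq-proof uses \allelim on a formula that shares the outermost $\forall x$ of $A$, then after embedding the term $\eps_x\neg B^\eps(x)$ \emph{is} critical, and eliminating it yields a disjunction over concrete witnesses, not a single generic instance you could later close with \allintro. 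Your ``outside-in scheduling'' matches the rank ordering already used in the paper, but it does not change what the elimination step outputs.

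The standard repair --- and what Hilbert--Bernays actually do, hence presumably what the authors intend by ``nothing new'' --- is to pass first to the Herbrand normal form $A^H$ of the prenex $A$: replace each universal $\forall y_j$ by a fresh function symbol $f_j$ applied to the preceding existential variables, so that $A^H$ is purely existential. Since $\PCEq\vdash A\to A^H$ (by \allelim), from $\PCeEq\vdash A$ one obtains $\PCeEq\vdash A^H$; the (extended) embedding and then the Extended First Epsilon Theorem give an \ECEq-provable Herbrand disjunction $\bigvee_i M^H(\vec{s}_i)$. Because the $f_j$ are fresh, the maximal $f_j$-headed subterms may now be replaced by fresh free variables in reverse prefix order, after which \allintro and \exintro rebuild a \PCEq-proof of $A$. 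The eigenvariables thus arise from the Herbrand-function detour, not from any case of the \eps-elimination lemmas themselves.
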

\section{Extended First Epsilon Theorem}
\label{s:extended-first-epsilon-theorem}

In contrast to Section~\ref{s:first-and-second-epsilon-theorems}, we
consider the case the goal formula involves \eps-terms, which leads us
to \emph{extended first epsilon theorem}.  Assume an \ECeEq-proof of a
formula \(E(\vec{s}\,)\) is given, where \(\vec{s}\) may contain
\eps-terms.  Eliminating critical formulas and \eps-equality formulas,
we obtain an \ECEq-proof of a quantifier free formula \(\bigvee_{i <
  n} E(\vec{t}_i)\) for some terms \(\vec{t}_0, \vec{t}_1, \ldots,
\vec{t}_{n-1}\), which is called the \emph{Herbrand disjunction}.  We say that the \emph{Herbrand complexity} of \(E(\vec{s}\,)\) is the smallest length
\(n\) of such a Herbrand disjunction, which is denoted by
\(\HC(E(\vec{s}\,))\).  We firstly study the \eps-elimination method
to prove extended first epsilon theorem without considering the
Herbrand complexity, then the complexity analysis follows to compute
an upper bound of the Herbrand complexity.

\subsection{Proof of extended first epsilon theorem}

We start from describing how to eliminate maximal critical \eps-terms.
In case there is a critical formula belonging to a critical \eps-term
of the maximal rank, we follow the \eps-elimination method described
in Section~\ref{s:first-and-second-epsilon-theorems} for
\ECeEq~\cite{HilbertBernays39}.  Otherwise, only \eps-equality
formulas belong to critical \eps-terms of the maximal rank.  Then we
substitute a function symbol for the \eps-matrix corresponding to the
\eps-term, in order to find a better upper bound of the Herbrand
complexity.

The following lemma is for the first case, i.e.~a critical formula is
involved.

\begin{lem}\label{l:ext-first-eps-eq-1}
  Assume \(\ECeEq \vdash_\pi E(\vec{s}\,)\) for a formula \(E(\vec{a}) \in
  L(\ECEq)\) and terms \(\vec{s} \in L(\ECeEq)\).  If \(e\) is the maximal
  \eps-term of \(\pi\), there is a proof \(\pi_e\) with \(\rk(\pi_e)
  \le \rk(\pi)\) and \(\order(\pi_e, r) < \order(\pi, r)\) such that
  \(\ECeEq \vdash_{\pi_e} \bigvee_{i=0}^{\width_\pi(e)} E(\vec{s}_i)\)
  for \(\vec{s}_{i} \in L(\ECeEq)\).
\end{lem}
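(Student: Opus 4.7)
The plan is to adapt the $\eps$-elimination step used in Lemma~\ref{l:epsilon-elimination-with-eq} to the setting where the target formula $E(\vec{s}\,)$ genuinely contains the $\eps$-term $e$ being eliminated. In that lemma the goal was $e$-free, so a single eliminated proof sufficed; here the substitutions performed to discharge the different formulas belonging to $e$ alter the conclusion, producing different instances $E(\vec{s}_i)$, and a Herbrand-style disjunction collects them.

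Concretely, let $e \aeq \eps_x A(x; \vec{u})$ and enumerate the critical formulas belonging to $e$ in $\pi$ as $A(t_k; \vec{u}) \to A(e; \vec{u})$ for $k < w_c$, and the $\eps$-equality formulas belonging to $e$ as $\vec{u} = \vec{v}_k \to \eps_x A(x; \vec{u}) = \eps_x A(x; \vec{v}_k)$ for $k < w_{\mathrm{eq}}$, so that $w_c + w_{\mathrm{eq}} = \width_\pi(e)$. I would construct $\width_\pi(e)+1$ sub-proofs whose conclusions become the disjuncts. The first, $\bar\pi$, is obtained by assuming the conjunction $\bigwedge_k \neg A(t_k; \vec{u}) \wedge \bigwedge_k \neg(\vec{u} = \vec{v}_k)$: under this hypothesis every critical and $\eps$-equality formula belonging to $e$ becomes a propositional consequence of the assumption, so a proof of $E(\vec{s}\,)$ results, and I set $\vec{s}_0 := \vec{s}$. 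For each $k < w_c$ I would assume $A(t_k; \vec{u})$ and substitute $t_k$ for $e$ throughout $\pi$: critical formulas belonging to $e$ become propositional consequences of $A(t_k; \vec{u})$ (modulo the term-substitution trick already used in Lemma~\ref{l:epsilon-elimination-no-eq}), $\eps$-equality formulas belonging to $e$ are handled by Lemma~\ref{l:subst-eps-=} together with~\EQ, and every other critical or $\eps$-equality formula is preserved by Lemmas~\ref{l:non-critical-substitution-1}, \ref{l:non-critical-substitution-2}, and~\ref{l:non-critical-substitution-3}; the conclusion becomes $E(\vec{s}_{k+1})$ with $\vec{s}_{k+1} := \vec{s}\{e/t_k\}$. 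For each $k < w_{\mathrm{eq}}$ I would assume $\vec{u} = \vec{v}_k$ and substitute $\eps_x A(x; \vec{v}_k)$ for $e$: the critical formulas belonging to $e$ are recovered by Lemma~\ref{l:subst-CA} (whose degree hypothesis $\dgr(\eps_x A(x; \vec{v}_k)) \le \dgr(e)$ holds because $e$ has maximal degree among critical $\eps$-terms of rank $r$), the remaining $\eps$-equality formulas belonging to $e$ by Lemma~\ref{l:subst-eps-=}, and the stability lemmas handle the rest; the conclusion becomes $E(\vec{s}_{w_c+k+1})$ with $\vec{s}_{w_c+k+1} := \vec{s}\{e/\eps_x A(x; \vec{v}_k)\}$. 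Combining these $\width_\pi(e)+1$ sub-proofs by the propositional tautology that at least one of their assumptions must hold yields the required $\pi_e$ with conclusion $\bigvee_{i=0}^{\width_\pi(e)} E(\vec{s}_i)$.

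For the side conditions, $\bar\pi$ introduces no new critical $\eps$-term of rank $r$, and in each substitution branch the degree-maximality of $e$ among critical $\eps$-terms of rank $r$ ensures, via Lemma~\ref{l:non-critical-substitution-3}, that no other critical $\eps$-term of rank $r$ contains $e$ as a proper subterm, so the substitution neither damages nor multiplies the critical $\eps$-terms of rank $r$ already in $\pi$; the auxiliary applications of Lemmas~\ref{l:subst-CA} and~\ref{l:subst-eps-=} contribute only formulas of rank at most $r$, and those of rank exactly $r$ belong to critical $\eps$-terms already present in $\pi$ and distinct from $e$. Hence $e$ is eliminated while no new critical $\eps$-term of rank $r$ is introduced, giving $\rk(\pi_e) \le r$ and $\order(\pi_e, r) < \order(\pi, r)$.

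The main obstacle I anticipate is precisely this bookkeeping: one must verify in each branch that substituting $e$ by $t_k$ or by $\eps_x A(x; \vec{v}_k)$ does not destroy critical or $\eps$-equality formulas of rank $r$ belonging to $\eps$-terms other than $e$, nor attach them to a fresh maximal $\eps$-term. The combination of the degree-maximality of $e$ with the stability lemmas developed in Section~\ref{s:first-and-second-epsilon-theorems} is the crucial ingredient; the rest is a direct assembly of that machinery together with the disjunctive packaging forced by the $\eps$-terms occurring in $\vec{s}$.
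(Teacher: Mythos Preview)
Your flat case split has a genuine gap in the critical-formula branches. After assuming $A(t_k; \vec{u})$ and substituting $t_k$ for $e$ throughout $\pi$, an $\eps$-equality formula $\vec{u} = \vec{v}_j \to \eps_x A(x; \vec{u}) = \eps_x A(x; \vec{v}_j)$ becomes $\vec{u} = \vec{v}_j \to t_k = \eps_x A(x; \vec{v}_j)$ (degree-maximality of $e$ guarantees that neither $\vec{u}$ nor $\vec{v}_j$ contains $e$). This is no longer an $\eps$-equality formula, and Lemma~\ref{l:subst-eps-=} does not apply: that lemma repairs a formula both of whose sides remain instances of the same $\eps$-matrix, whereas here the left side has collapsed to the arbitrary term $t_k$. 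Nothing in \EQ or in the hypothesis $A(t_k; \vec{u})$ yields $t_k = \eps_x A(x; \vec{v}_j)$ from $\vec{u} = \vec{v}_j$; the calculus has no uniqueness principle for $\eps$. So these $w_c$ branches do not produce proofs.

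The paper avoids this by nesting the two case splits instead of flattening them. It first splits only on the $\eps$-equality premises: either some $\vec{u} = \vec{v}_i$ holds (your $\eps$-equality branches, handled via Lemmas~\ref{l:subst-CA} and~\ref{l:subst-eps-=} exactly as you describe), or all of them fail. In that last branch the $\eps$-equality formulas belonging to $e$ are discharged by ex falso \emph{before} any substitution is made; only then is the ordinary \ECe-style elimination of the critical formulas carried out, producing the $w_c+1$ disjuncts $E(\vec{s}_0),\ldots,E(\vec{s}_{w_c})$ inside this single branch. The total $w_{\mathrm{eq}} + (w_c+1) = \width_\pi(e)+1$ matches yours, but the order of operations is what makes the argument go through. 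Your scheme is easily repaired by strengthening each critical-formula branch with the extra hypothesis $\bigwedge_j \neg(\vec{u} = \vec{v}_j)$; the propositional glue still covers all cases, and what you then have is precisely the paper's nested decomposition unfolded.
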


\begin{proof}
  Assume \(\eps_x A(x; \vec{u}) =: e\) is the maximal critical
  \eps-term in \(\pi\) and \(\vec{v}_i = \vec{u} \to \eps_x A(x;
  \vec{v}_i)=\eps_x A(x; \vec{u})\) for \(0 \le i < \widtheq_\pi(e) =:
  l\) and \(A(t_j; \vec{u}) \to A(\eps_x A(x; \vec{u}); \vec{u})\) for
  \(0 \le j < \widtheps_\pi(e) =: m\) occur in \(\pi\) as
  \eps-equality and critical formulas, resp.  We form proofs
  \(\bar{\pi}\) of \((\bigwedge_{i=0}^{l-1} \vec{v}_i \neq \vec{u})
  \to \bigvee_{i=0}^{m} E(\vec{s}_i)\) where \(\vec{s}_0 := \vec{s}\)
  and \(\vec{s}_j\) for \(0 < j \le m\) is a result of replacing \(e\)
  in \(\vec{s}\) by \(t_{j-1}\), and \(\pi_i\) of \(\vec{v}_i =
  \vec{u} \to E(\vec{s}_{m+i+1})\), where \(\vec{s}_{m+i+1}\) is a
  result of replacing \(e\) in \(\vec{s}\) by \(\eps_x A(x;
  \vec{v}_i)\).  To get the former proof, assume
  \(\bigwedge_{i=0}^{l-1} \vec{v}_i \neq \vec{u}\), then all the
  \eps-formulas belonging to \(e\) are eliminated due to ex falso
  quodlibet.  By \eps-elimination for \ECe, remaining critical
  formulas belonging to \(e\) are eliminated and we get a proof of
  \(\bigvee_{j=0}^{m} (\bigwedge_{i=0}^{l-1} \vec{v}_i \neq \vec{u}
  \to E(\vec{s}_j))\).  As \(e\) is maximal \eps-term, \(\vec{u}\) and
  \(\vec{v}_i\) for \(0 \le i < l\) are not affected by the
  substitution, hence \(\bigwedge_{i=0}^{l-1} \vec{v}_i \neq \vec{u}
  \to \bigvee_{j=0}^{m} E(\vec{s}_j)\) follows.  To get the latter
  proof for \(j\) such that \(0 \le j < l\), assume \(\vec{v}_j =
  \vec{u}\) and substitute \(\eps_x A(x; \vec{v}_j)\) in \(\pi\) for
  \(e\).  Due to Lemma~\ref{l:subst-eps-=}, the modified \eps-equality
  formulas in \(\pi\) are provable after the substitution.  Due to
  Lemma~\ref{l:subst-CA}, the changed critical formulas which belonged
  to \(e\) in \(\pi\) are provable after the substitution.  We obtain
  \(\pi_e\) combining \(\bar{\pi}\) and \(\vec{\pi}\), where there is
  no critical \eps-term belonged to by a critical formula nor an
  \eps-equality formula, \(\rk(\pi_e) \le \rk(\pi)\), and there is no
  new critical \eps-term of rank \(\rk(\pi)\) belonged by an
  \eps-equality formula.
\end{proof}

Repeating the above lemma, we obtain a proof of a strictly smaller
rank which concludes a Herbrand disjunction.

\begin{dfn}[Critical ranks]
  For a proof \(\pi\), define the set \(\crs(\pi)\) of the ranks of
  critical formulas by \(\{\rk(e) \mid \text{a critical formula
    belongs to \(e\)}\}\).
\end{dfn}

\begin{lem}\label{l:ext-first-eps-eq-4}
  Assume \(\ECeEq \vdash_\pi E(\vec{s}\,)\) for \(E(\vec{a}) \in
  L(\ECEq)\) and \(\vec{s} \in L(\ECeEq)\).  If there is a critical
  formula of the rank \(\rk(\pi) =: r\), there is a proof \(\pi'\)
  such that \(\rk(\pi') < r\), \(\crs(\pi')=\crs(\pi) \setminus
  \{r\}\), and \(\ECeEq \vdash_{\pi'} \bigvee_{i=0}^n E(\vec{s}_i)\)
  where \(\vec{s}_0, \ldots, \vec{s}_{n} \in L(\ECeEq)\) for some \(n\).
\end{lem}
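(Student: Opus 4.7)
The plan is to iterate Lemma~\ref{l:ext-first-eps-eq-1} until all critical \eps-terms of rank \(r\) have been eliminated, producing a longer and longer Herbrand disjunction at each step. Set \(\pi_0 := \pi\) and suppose inductively that \(\pi_i\) proves a disjunction \(\bigvee_{j=0}^{n_i} E(\vec{s}^{(i)}_j)\) with each \(\vec{s}^{(i)}_j \in L(\ECeEq)\) and \(\rk(\pi_i) \le r\). As long as \(\order(\pi_i, r) > 0\), pick a maximal critical \eps-term of rank \(r\) in \(\pi_i\) and apply Lemma~\ref{l:ext-first-eps-eq-1} to obtain \(\pi_{i+1}\) with \(\order(\pi_{i+1}, r) < \order(\pi_i, r)\) and \(\rk(\pi_{i+1}) \le r\).

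To feed \(\pi_i\) into Lemma~\ref{l:ext-first-eps-eq-1} one has to regard it as a proof of a single formula \(E'(\vec{s}^{(i)})\). This is achieved by setting \(E'(\vec{a}_0, \dots, \vec{a}_{n_i}) := \bigvee_{j=0}^{n_i} E(\vec{a}_j)\), which still lies in \(L(\ECEq)\) because \(E \in L(\ECEq)\), and letting \(\vec{s}^{(i)}\) be the concatenation of the vectors \(\vec{s}^{(i)}_0, \dots, \vec{s}^{(i)}_{n_i}\). Lemma~\ref{l:ext-first-eps-eq-1} applied to this reading yields \(\pi_{i+1}\) proving a still longer disjunction of instances of the original \(E\). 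Since \(\order(\pi_i, r)\) strictly decreases, after at most \(\order(\pi_0, r)\) iterations we reach \(\pi_k\) with \(\order(\pi_k, r) = 0\). At that point no critical \eps-term of rank \(r\) appears in \(\pi_k\), so \(\rk(\pi_k) < r\) and no critical formula of rank \(r\) is present; set \(\pi' := \pi_k\), which then proves a disjunction \(\bigvee_{i=0}^n E(\vec{s}_i)\) of the required shape.

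The main obstacle is to verify the exact equality \(\crs(\pi') = \crs(\pi) \setminus \{r\}\) rather than merely \(r \notin \crs(\pi')\). The substitutions inside Lemma~\ref{l:ext-first-eps-eq-1}, in particular those that come from Lemma~\ref{l:aux-1} and Lemma~\ref{l:subst-CA}, can in principle introduce critical and \eps-equality formulas at ranks strictly below \(r\). The key observation here is that by Lemma~\ref{l:rank-subst} the rank of an \eps-term is invariant under substitution of its parameters, so every newly introduced critical formula belongs to an \eps-subterm of a critical \eps-matrix already occurring in \(\pi_i\); tracing this back inductively through the iteration, each such rank already lies in \(\crs(\pi) \setminus \{r\}\). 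Carrying out this bookkeeping carefully at each iteration, while at the same time maintaining the disjunctive form of the goal, is the only non-trivial part of the argument.
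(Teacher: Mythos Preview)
Your overall strategy---iterate Lemma~\ref{l:ext-first-eps-eq-1} until \(\order(\cdot,r)=0\)---is exactly the paper's (one-sentence) proof, and your repackaging of the growing disjunction as a single \(E'(\vec{a}_0,\ldots,\vec{a}_{n_i})\) to feed back into the lemma is correct and more explicit than what the paper writes.

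Where your argument goes off is in the justification of \(\crs(\pi')=\crs(\pi)\setminus\{r\}\). You worry that Lemmas~\ref{l:aux-1} and~\ref{l:subst-CA} may create new \emph{critical} formulas at ranks strictly below \(r\), and then try to trace those ranks back into \(\crs(\pi)\). But look at what those lemmas actually produce. Lemma~\ref{l:aux-1} uses only \EQ and \eps-equality axioms; it introduces no critical formulas at all, and \eps-equality formulas do not contribute to \(\crs\). The single new critical formula per call of Lemma~\ref{l:subst-CA} is \(A(t_i;\vec{v}_j)\to A(\eps_x A(x;\vec{v}_j);\vec{v}_j)\), which belongs to \(\eps_x A(x;\vec{v}_j)\) and hence has rank exactly \(r\) by Lemma~\ref{l:rank-subst}, not a rank below \(r\). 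So your ``tracing back'' argument is aimed at a phenomenon that never occurs, and the claim that these new ranks lie in \(\crs(\pi)\setminus\{r\}\) is simply false for the formulas that are actually introduced.

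The correct (and simpler) bookkeeping is this: for ranks \(r'<r\), every critical formula in \(\pi_{i+1}\) is obtained from one in \(\pi_i\) by a substitution \(\{e/\,\cdot\,\}\), and by Lemma~\ref{l:non-critical-substitution-1} such a substitution preserves both the property of being a critical formula and its rank. Conversely, every critical formula of rank \(r'<r\) in \(\pi_i\) survives into at least one of the branches \(\bar\pi,\pi_0,\ldots,\pi_{l-1}\). Hence \(\crs(\pi_i)\cap\{1,\ldots,r-1\}\) is constant along the iteration, and once \(\order(\pi_k,r)=0\) the rank \(r\) drops out, giving \(\crs(\pi')=\crs(\pi)\setminus\{r\}\).
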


\begin{proof}
  At most \(\order(\pi, r)\) times applications of
  Lemma~\ref{l:ext-first-eps-eq-1} eliminate all the critical
  \eps-terms of rank \(\rk(\pi)\) in \(\pi\).
\end{proof}

The \emph{function symbol substitution} is applicable for eliminating
\eps-equality formulas, provided no critical formula belongs to any
\eps-term of those \eps-equality formulas.

\begin{dfn}[Function symbol substitution]
  Assume \(e\) is a critical \eps-term of the form \(g(\vec{u})\),
  where \(g\) is the \eps-matrix of \(e\), and let \(f\) be a function
  symbol of the arity \(|\vec{u}|\) which is uniquely assigned to
  \(g\).  The substitution \(\{g(\vec{u})/f(\vec{u})\}\) is
  the \emph{function symbol substitution} for \(e\).
\end{dfn}

We are particularly interested in using the function symbol
substitution for the case only \eps-equality formulas are of the
maximal rank.

\begin{lem}
  \label{l:ext-first-eps-only-eq}
  Assume \(\ECeEq \vdash_\pi E(\vec{e}\,)\) for \(E(\vec{a}) \in
  L(\ECEq)\) and for \eps-terms \(\vec{e}\).  If only \eps-equality
  formula is of rank \(\rk(\pi)\), there is an \ECeEq-proof \(\pi'\)
  of \(E(\vec{t}\,)\) for some terms \(\vec{t}\) such that \(\rk(\pi')
  < \rk(\pi)\), \(\crs(\pi')=\crs(\pi)\), and \(\cc(\pi') <
  \cc(\pi)\).  Moreover for any \(r' < \rk(\pi)\), \(\order(\pi', r')
  \le \order(\pi, r')\), and \(\matrices(\pi', r') = \matrices(\pi,
  r')\).
\end{lem}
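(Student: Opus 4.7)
The plan is to apply a single sweeping \emph{function symbol substitution} that eliminates every critical \eps-term of rank $r := \rk(\pi)$ at once. By hypothesis no critical formula of rank $r$ occurs in $\pi$, so the only rank-$r$ axiomatic content to worry about are \eps-equality formulas of the shape $\vec{u}=\vec{v} \to g(\vec{u}) = g(\vec{v})$, and these collapse into instances of the \EQ-congruence axiom $\vec{u}=\vec{v} \to f(\vec{u}) = f(\vec{v})$ under the substitution. The rest of the proof is preserved modulo the relabelling.

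Concretely, I would first enumerate the distinct \eps-matrices $g_1,\dots,g_m$ of rank $r$ that occur in $\pi$, introduce fresh function symbols $f_1,\dots,f_m$ of matching arities, and define a translation $t \mapsto t^\sigma$ by structural recursion that rewrites any subterm of the form $g_j(\vec{u})$ as $f_j(\vec{u}^\sigma)$ (processing parameters innermost-first so that nested maximal \eps-terms are already rewritten when the outer match is made). Applying this translation line-by-line to $\pi$, and inserting \EQ-axiom lines where rank-$r$ \eps-equality formulas used to sit, yields a candidate proof $\pi'$ whose last line is $E(\vec{e}\,)^\sigma = E(\vec{e}^{\,\sigma})$, since $E(\vec{a}) \in L(\ECEq)$ is \eps-free. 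The witness is then $\vec{t} := \vec{e}^{\,\sigma}$.

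I then verify that $\pi'$ is a valid \ECeEq-proof by case analysis on each inference: propositional tautologies and applications of modus ponens are stable under any uniform substitution, \EQ-instances remain \EQ-instances, and critical respectively \eps-equality formulas belonging to \eps-terms of rank $r' < r$ are preserved with unchanged rank by Lemma~\ref{l:non-critical-substitution-1} and Lemma~\ref{l:non-critical-substitution-2}, iterated through the constituent substitutions of $\sigma$. Every rank-$r$ \eps-equality formula becomes an \EQ-congruence instance as explained, so the critical count strictly decreases—at least one such formula existed because every critical \eps-term admits one—yielding $\cc(\pi') < \cc(\pi)$. The invariants $\rk(\pi') < r$, $\crs(\pi') = \crs(\pi)$, and $\order(\pi',r') \le \order(\pi,r')$, $\matrices(\pi',r') = \matrices(\pi,r')$ for $r' < r$ follow because $\sigma$ introduces no new \eps-operators and, by Lemma~\ref{l:rank-subst}, cannot promote a rank-$r'$ \eps-term into rank $r$ nor split an \eps-matrix of rank below $r$ into several.

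The main obstacle is setting up the simultaneous substitution cleanly: the parameter tuple $\vec{u}$ of a maximal \eps-term $g_j(\vec{u})$ may itself contain other maximal \eps-terms, possibly built from the same $g_j$ (up to $\aeq$ and the choice of parameters), so $\sigma$ must be defined innermost-first and stable under $\aeq$, assigning the same fresh $f_j$ to every critical \eps-term sharing a matrix. Once this bookkeeping is made precise, the preservation of rank under substitution (Lemma~\ref{l:rank-subst}) guarantees that the iterated application of the non-critical-substitution lemmas correctly classifies every formula of $\pi'$, and the stated complexity bounds are a direct consequence of the structure of $\sigma$.
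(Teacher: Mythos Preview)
Your proposal is correct and follows the paper's strategy—replacing each rank-$r$ \eps-matrix by a fresh function symbol so that every rank-$r$ \eps-equality formula collapses into an \EQ-congruence instance—differing only in packaging: the paper iterates one maximal critical \eps-term at a time (and, as its later remark concedes, passes through intermediate non-proofs), whereas your single recursive $\sigma$ carries out all replacements at once and sidesteps that issue. Your appeal to Lemmas~\ref{l:non-critical-substitution-1} and~\ref{l:non-critical-substitution-2} for the lower-rank formulas is in fact the more accurate citation; the paper invokes Lemma~\ref{l:non-critical-substitution-3}, but its degree hypothesis $\dgr(e)\le\dgr(e')$ need not hold when $e$ is a rank-$r'$ critical \eps-term and $e'$ the rank-$r$ substituend, so only the weaker conclusion (same rank, same matrix, possibly different critical \eps-term) is available—which is exactly what the stated bounds on $\order$ and $\matrices$ require.
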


\begin{proof}
  Let \(r\) be the rank \(\rk(\pi)\).  We repeatedly replace the
  maximal critical \eps-term of rank \(r\) through the corresponding
  function symbol substitution.  After \(\order(\pi, r)\) times of
  replacements, there is no more critical \eps-term of rank \(r\) and
  the process terminates.  After the substitutions, each \eps-equality
  formula of rank \(r\) in \(\pi\) is an identity formula for a
  function symbol \(f\).  Each critical (and \eps-equality
  respectively) formula of a rank strictly smaller than \(\rk(\pi)\)
  is another critical (and \eps-equality respectively) formula which
  belongs to the same critical \eps-term after the substitution due to
  Lemma~\ref{l:non-critical-substitution-3}, hence what we obtained is
  for some terms \(\vec{t}\) an \ECeEq-proof \(\pi'\) of
  \(E(\vec{t}\,)\) with the stated conditions.
\end{proof}

\begin{lem}\label{l:ext-first-eps-eq-5}
  Assume \(\ECeEq \vdash_\pi E(\vec{s}\,)\) for \(E(\vec{a}) \in
  L(\ECEq)\) and \(\vec{s} \in L(\ECeEq)\).  There is an \ECeEq-proof
  \(\pi'\) of the formula \(E(\vec{t}\,)\) for \(\vec{t} \in L(\ECeEq)\)
  such that \(\order(\pi') \le \order(\pi)\) and
  \(\crs(\pi')=\crs(\pi)\).  Moreover, \(\crs(\pi)\) is empty or
  \(\rk(\pi') \in \crs(\pi')\).
\end{lem}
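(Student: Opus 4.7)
The plan is to iterate Lemma~\ref{l:ext-first-eps-only-eq} so as to strip off successive top ranks of the proof whenever no critical formula belongs to the top rank, stopping as soon as the top rank contains a critical formula, or no \eps-terms remain at all. This is the natural counterpart, for extended first epsilon theorem, of the passage from a proof whose top rank carries only \eps-equality formulas to one whose top rank is suitable for the attack of Lemma~\ref{l:ext-first-eps-eq-4}.

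Concretely I would define by recursion a sequence of \ECeEq-proofs \(\pi_0, \pi_1, \ldots\) together with term vectors \(\vec{s}_0, \vec{s}_1, \ldots\) such that \(\pi_i\) proves \(E(\vec{s}_i)\), starting from \(\pi_0 := \pi\) and \(\vec{s}_0 := \vec{s}\). Given \(\pi_i\), put \(r_i := \rk(\pi_i)\). If \(r_i = 0\) or \(r_i \in \crs(\pi_i)\), the construction stops and I set \(\pi' := \pi_i\). Otherwise only \eps-equality formulas are of rank \(r_i\) in \(\pi_i\), so Lemma~\ref{l:ext-first-eps-only-eq} delivers \(\pi_{i+1}\) proving \(E(\vec{s}_{i+1})\) for some \(\vec{s}_{i+1}\), with \(\rk(\pi_{i+1}) < r_i\), \(\crs(\pi_{i+1}) = \crs(\pi_i)\), and \(\order(\pi_{i+1}, r') \le \order(\pi_i, r')\) for every \(r' < r_i\); in addition no critical \eps-term of rank \(\ge r_i\) survives in \(\pi_{i+1}\).

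Because the ranks strictly decrease, the sequence terminates at some \(\pi' = \pi_n\). The invariants give \(\crs(\pi') = \crs(\pi_0) = \crs(\pi)\), and aggregating the rank-wise inequality for \(\order\) over all ranks yields \(\order(\pi') \le \order(\pi)\). For the final clause, termination occurs either because \(r_n = 0\), in which case \(\pi'\) contains no \eps-terms at all and hence \(\crs(\pi) = \crs(\pi') = \emptyset\), or because \(r_n \in \crs(\pi_n) = \crs(\pi)\), in which case \(\rk(\pi') \in \crs(\pi')\), as required.

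The main obstacle is genuinely only the bookkeeping: one must verify that applying the function symbol substitution at the top rank does not spoil the structure at lower ranks, but this is exactly what the preservation clauses of Lemma~\ref{l:ext-first-eps-only-eq} (preservation of \(\crs\), non-increase of rank-wise \(\order\), and elimination of all rank-\(r_i\) critical \eps-terms without introducing new ones of rank \(\ge r_i\)) are designed to guarantee. With that lemma in hand, the present statement reduces to a termination argument on the strictly decreasing sequence of ranks.
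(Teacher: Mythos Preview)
Your proposal is correct and follows essentially the same approach as the paper: iterate Lemma~\ref{l:ext-first-eps-only-eq} to peel off the top ranks carrying only \eps-equality formulas until either the proof has rank~$0$ or its top rank lies in $\crs(\pi)$. Your write-up is in fact more careful than the paper's own proof, which simply counts the ranks above $\max(\crs(\pi))$ and applies the lemma that many times without spelling out the invariants or the termination case analysis.
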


\begin{proof}
  Assume \(\rk(\pi) \not \in \crs(\pi)\), namely, there are \(r_1,
  r_2, \ldots, r_n > \max(\crs(\pi))\) and for each \(r_i\), there are
  \eps-equality formulas of rank \(r_i\) in \(\pi\).  We apply
  Lemma~\ref{l:ext-first-eps-only-eq} for \(n\)-times.
\end{proof}

\begin{rem}
  Even in case there are both critical and \eps-equality formulas of
  the maximal rank, it is still possible to make use of the function
  symbol substitution to eliminate \eps-equality formulas, provided
  there is no critical formula belonging to any critical \eps-term of
  those \eps-equality formulas.  During the substitution process, it
  happens to have non-proofs because formulas of the form
  \(\vec{u}=\vec{v} \to f(\vec{u})=\eps_x A(x; \vec{v})\) are present,
  but eventually these formulas will be of the form \(\vec{u}=\vec{v}
  \to f(\vec{u})=f(\vec{v})\), an instance of identity formula of a
  function symbol.
\end{rem}

\begin{thm}[Extended first epsilon theorem for \ECeEq]
  \label{t:ext-first-eps-eq}
  Assume \(\ECeEq \vdash_\pi E(\vec{s}\,)\) for \(E(\vec{a}) \in
  L(\ECEq)\) and \(\vec{s} \in L(\ECeEq)\).  There is a proof \(\pi'\)
  such that \(\ECEq \vdash_{\pi'} \bigvee_{i=0}^n E(\vec{s}_i)\) for
  some \(n\) and \(\vec{s}_0, \ldots, \vec{s}_{n} \in L(\EC)\).
\end{thm}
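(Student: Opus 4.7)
The plan is to iteratively eliminate critical \eps-terms rank-by-rank, alternating between Lemma~\ref{l:ext-first-eps-eq-5}, which normalises the proof so that either no critical formulas remain or one occurs at the current maximal rank, and the two elimination lemmas (Lemma~\ref{l:ext-first-eps-eq-4} and Lemma~\ref{l:ext-first-eps-only-eq}) which perform the actual removal. The induction proceeds on \(\rk(\pi)\), after strengthening the statement to allow \(E(\vec{a})\) to be an arbitrary formula of \(L(\ECEq)\) so that the disjunctive goals produced in intermediate steps remain in scope of the inductive hypothesis.

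Starting from \(\ECeEq \vdash_\pi E(\vec{s}\,)\), I first apply Lemma~\ref{l:ext-first-eps-eq-5} to obtain a proof \(\pi_1\) of some \(E(\vec{s}\,')\) satisfying the invariant \(\crs(\pi_1) = \emptyset\) or \(\rk(\pi_1) \in \crs(\pi_1)\). In the second subcase, Lemma~\ref{l:ext-first-eps-eq-4} reduces the rank by one and produces a Herbrand disjunction \(\bigvee_{i} E(\vec{s}_i)\); applying the inductive hypothesis with the disjunctive formula \(\bigvee_i E(\vec{a}_i)\) in the role of \(E\) yields an \ECEq-proof of a Herbrand disjunction, which upon flattening is the required Herbrand disjunction for the original \(E\). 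In the first subcase, only \eps-equality formulas may remain at positive ranks; a finite iteration of Lemma~\ref{l:ext-first-eps-only-eq}, invoked at each rank still carrying such a formula, produces a proof with strictly smaller rank in which no \eps-term appears in a critical or an \eps-equality formula.

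The base case is \(\rk(\pi) = 0\) together with no remaining critical or \eps-equality formulas. Then every \eps-term occurring in \(\pi\), in particular in the goal \(E(\vec{s}\,'')\), is inert: it participates in no critical formula, no \eps-equality axiom, and no inference of \ECeEq beyond the purely propositional and equality level. A uniform function-symbol substitution, replacing each residual \eps-matrix by a fresh function symbol of matching arity throughout \(\pi\), therefore turns \(\pi\) into a bona fide \ECEq-proof of \(E(\vec{t}\,)\) with \(\vec{t}\) \eps-free, concluding the induction.

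The main obstacle is twofold. First, each application of Lemma~\ref{l:ext-first-eps-eq-4} replaces the single goal \(E(\vec{s}\,)\) by a disjunction \(\bigvee E(\vec{s}_i)\), so the induction cannot be carried out for a fixed \(E\); the remedy is to abstract \(E\) to an arbitrary \(L(\ECEq)\)-formula. Second, one must ensure that the substitutions performed during elimination, whether the critical-formula elimination inside Lemma~\ref{l:ext-first-eps-eq-4} or the function-symbol substitution inside Lemma~\ref{l:ext-first-eps-only-eq}, do not re-introduce critical or \eps-equality formulas at the rank currently being eliminated. This is guaranteed by Lemmas~\ref{l:non-critical-substitution-1}, \ref{l:non-critical-substitution-2}, and~\ref{l:non-critical-substitution-3}, already invoked inside the cited elimination lemmas, whence the well-foundedness of the induction on \(\rk(\pi)\) is preserved.
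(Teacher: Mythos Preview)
Your proposal is correct and follows essentially the same approach as the paper. The paper organises the argument as an explicit iteration of length $m=|\crs(\pi)|$, alternating Lemma~\ref{l:ext-first-eps-eq-5} and Lemma~\ref{l:ext-first-eps-eq-4} and then replacing the residual inert \eps-terms by free variables; your induction on $\rk(\pi)$, with the inductive hypothesis applied to the disjunctive formula $\bigvee_i E(\vec{a}_i)$ and the flattening step, is just a different packaging of the same iteration using the same lemmas.
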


\begin{proof}
  We make a sequence of proofs \(\pi_0, \pi_1, \ldots, \pi_{m}\) for
  \(m=|\crs(\pi)|\) in the following way.  Let \(\pi_0\) be a result
  of applying Lemma~\ref{l:ext-first-eps-eq-5} to \(\pi\).  If
  \(\crs(\pi_i)\) is not empty, let \(\pi_{i+1}\) be a proof obtained
  by applying Lemma~\ref{l:ext-first-eps-eq-4} and then
  Lemma~\ref{l:ext-first-eps-eq-5} to \(\pi_i\).  For each \(i\),
  \(\crs(\pi_{i+1}) = \crs(\pi_{i}) \setminus \{\rk(\pi_i)\}\).  Since
  \(\rk(\pi_{m}) = 0\), \(\pi_{m}\) is an \ECEq-proof.  Remaining
  occurrences of \eps-terms may be replaced by free variables.
\end{proof}

\subsection{Complexity analysis}
\label{s:extended-epsilon-calculus-complexity-analysis}

We make a detailed analysis on the proofs of the previous subsection,
in order to compute the numerical bound of the length of the
disjunction in Theorem~\ref{t:ext-first-eps-eq}.  To do so, we
consider the \emph{property degree} as a means of measuring the
complexity of a critical formula.  Given a critical formula \(A(t) \to
A(\eps_x A(x))\), we can determine the formula \(A(a)\).  The property
degree is to count the number of \eps-terms with a free variable
occurrence of \(a\).  If \(A(a)\) is of the form \(A'(a; b)\), this
number tells us at most how many \eps-equality formulas are needed to
prove the identity formula \(u=v \to A'(a; u) \to A'(a; v)\).
We define the \emph{property degree} and the \emph{maximal property
  degree} for an \eps-term as follows.

\begin{dfn}[Property degree]
  For an \eps-term \(e\), the \emph{property degree} \(\pdgr(e)\) is
  defined to be \(\max\{\dgr(t) \mid \text{\(t\) subordinates
    \(e\)}\}\).  The \emph{maximal property degree} \(\mpdgr(\pi, r)\)
  of a proof \(\pi\) of rank \(r\) is defined to be
  $$
  \max\{\pdgr(e) \mid
  \text{\(e\) of rank \(r\) is belonged to by an \eps-equality formula
    in \(\pi\)}\}.
  $$
  Also \(\mpdgr(\pi)\) is defined to be
  $$
  \max\{\pdgr(e) \mid \text{\(e\) belonged to by an \eps-equality
    formula in \(\pi\)}\}.$$
\end{dfn}

We give the following results concerning the property degree.

\begin{lem}
  For any \(\vec{u}, \vec{v}\) and \eps-matrix \(g(\vec{b})\),
  \(\pdgr(g(\vec{u}))=\pdgr(g(\vec{v}))\).  For a given proof
  \(\pi\), the set of critical \eps-matrices which belong to critical
  formulas does not increase through the \eps-elimination.
\end{lem}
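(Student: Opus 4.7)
The statement combines two independent assertions, which I plan to treat separately.

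For the first, I would unfold Definition~\ref{d:ECeEq}: since $g(\vec{b}) = \eps_x A(x; \vec{b})$ is an \eps-matrix, every proper subterm of $g(\vec{b})$ is a variable, so the body $A(x; \vec{b})$ has only variables as its subterms. Hence the only subsemiterm of $A(x; \vec{b})$ containing the bound variable $x$ is $x$ itself. After substituting the parameters $\vec{b}$ by arbitrary terms $\vec{u}$, every new subsemiterm of $A(x; \vec{u})$ either sits at a variable position of $A$ or arises from $\vec{u}$, and since $\vec{u}$ consists of terms no subsemiterm of $\vec{u}$ has $x$ in $\NQVar$. Consequently the terms subordinating $g(\vec{u})$ form the set $\{x\}$, giving $\pdgr(g(\vec{u})) = \dgr(x) = 0$. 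The same argument applies to $g(\vec{v})$, and the two agree.

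For the second claim, I would induct on the steps of the \eps-elimination procedure formalised in Lemmas~\ref{l:ext-first-eps-eq-1}, \ref{l:ext-first-eps-eq-4}, \ref{l:ext-first-eps-only-eq}, and \ref{l:ext-first-eps-eq-5}. The invariant to maintain is that every critical formula still present in the proof belongs to a critical \eps-term whose \eps-matrix already occurred in the original proof. Each elimination step substitutes a maximum critical \eps-term $e$ by either a premise term $t_j$ or by $\eps_x A(x; \vec{v}_i)$, another critical \eps-term sharing the matrix of $e$. By Lemma~\ref{l:eps-term-to-matrix}, the matrix of any surrounding critical \eps-term is recovered by abstracting its immediate arguments with fresh variables, so parameter substitution does not change its matrix. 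The auxiliary critical formulas introduced by Lemma~\ref{l:subst-CA} belong to \eps-terms whose matrix coincides with that of the eliminated $e$, while Lemmas~\ref{l:aux-1} and \ref{l:subst-eps-=} contribute only \eps-equality formulas, not critical ones. Therefore the set of critical \eps-matrices carrying a critical formula can only shrink.

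The main obstacle is the bookkeeping in the second part: one must inspect each auxiliary construction used inside an elimination lemma and confirm that the newly generated critical formula always points to a previously seen \eps-matrix. Once this check has been carried out explicitly for Lemmas~\ref{l:subst-CA}, \ref{l:subst-eps-=}, and \ref{l:aux-1}, the induction over elimination steps closes without further effort, and the invariant yields the claim.
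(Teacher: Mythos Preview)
Your argument for the first assertion has a genuine gap. From ``every proper subterm of $g(\vec{b})$ is a free variable'' you conclude ``the only subsemiterm of $A(x;\vec{b})$ containing $x$ is $x$ itself'', but the \eps-matrix condition restricts only \emph{subterms} (raw terms with no free occurrence of a bound variable), not \emph{subsemiterms}. For instance $g(b)=\eps_x P(\eps_y Q(x,y),b)$ is an \eps-matrix whose only proper subterm is the free variable $b$, yet $\eps_y Q(x,y)$ is a subsemiterm of the body with $x\in\NQVar(\eps_y Q(x,y))$; it therefore subordinates $g(b)$ and has degree~$1$, so $\pdgr(g(b))=1$, not~$0$. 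Your computation $\pdgr(g(\vec u))=0$ is thus false in general, and with it the argument collapses. The paper records only ``trivial'' here; what is intended is an invariance argument in the spirit of Lemma~\ref{l:rank-subst}---substituting terms for the free parameters does not create new subordinating positions, since the substituted terms carry no free bound variables---rather than the evaluation to~$0$ you attempt.

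Your treatment of the second assertion is correct and simply more explicit than the paper's. The paper isolates Lemma~\ref{l:subst-CA} as the only place in the elimination procedure where a genuinely new critical formula appears, and observes that the critical formula introduced there belongs to an \eps-term whose matrix coincides with that of the eliminated term. Your case-by-case inspection of Lemmas~\ref{l:aux-1}, \ref{l:subst-CA}, and \ref{l:subst-eps-=} amounts to the same observation.
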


\begin{proof}
  The first part is trivial.  Concerning the second part, we consider
  just Lemma~\ref{l:subst-CA}, because a new critical formula is
  introduced only if a critical term belongs to both critical formulas
  and \eps-equality formulas at the same time.  The critical formulas
  introduced in the Lemma belongs to an \eps-term of an \eps-matrix in
  the original proof, hence the claim holds.  The third part is trivial
  since the epsilon elimination method does not add a new \eps-matrix.
\end{proof}

Applying the epsilon elimination method, the maximal property degree
and the maximal arity are weakly decreasing.  Therefore, we can
compute them at the very beginning and keep referring to them as the
upper bounds of the property degrees and the arities through the whole
elimination procedure.

\begin{lem}
  The epsilon elimination method does not increase the maximal property
  degree nor the maximal arity.
\end{lem}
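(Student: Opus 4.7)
The plan is to reduce this statement directly to the preceding lemma. Both quantities $\marity(\pi)$ and $\mpdgr(\pi)$ are determined by the set of $\eps$-matrices occurring in $\pi$ as critical $\eps$-matrices: the arity is simply $|\vec{b}|$ for the matrix $\eps_x A(x;\vec{b})$, and by the first clause of the preceding lemma the property degree $\pdgr(g(\vec{u}))$ equals $\pdgr(g(\vec{a}))$ for every instance, so it depends only on the underlying matrix. Hence it suffices to show that no elimination step adds a new $\eps$-matrix to the proof, which is already asserted at the end of the preceding proof and only needs to be checked stepwise.

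To pin this down I would inspect each operation performed during the elimination procedure. Parameter substitutions $\{e/t\}$ — whether $t$ is an $\eps$-free term (Lemma~\ref{l:epsilon-elimination-no-eq}), another instantiation $\eps_x A(x;\vec{v}_j)$ of the same matrix (Lemma~\ref{l:epsilon-elimination-with-eq}), or a function symbol application $f(\vec{u})$ (Lemma~\ref{l:ext-first-eps-only-eq}) — only change the parameter lists of surrounding $\eps$-terms and so preserve the set of $\eps$-matrices (in the function-symbol case the set actually shrinks). The critical and $\eps$-equality formulas introduced through Lemma~\ref{l:subst-CA} belong to $\eps$-terms $\eps_x A(x;\vec{v})$ whose matrix $\eps_x A(x;\vec{b})$ was already present in $\pi$. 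The induction in Lemma~\ref{l:aux-1} invokes $\eps$-equality axioms for matrices coming from subsemiterms $r_i(a,\vec{b}_i)$ of an already present matrix $A(a;\vec{b})$, and these subordinate matrices occur already as matrices of $\eps$-subterms of the original critical $\eps$-term.

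Taking the maximum of $|\vec{b}|$ on both sides of the inclusion of $\eps$-matrix sets yields $\marity(\pi') \le \marity(\pi)$, and taking instead the maximum of $\pdgr(g(\vec{a}))$ over those matrices belonged to by an $\eps$-equality formula yields $\mpdgr(\pi') \le \mpdgr(\pi)$. The only subtle point, and the main piece of bookkeeping, is the inductive inspection of Lemma~\ref{l:aux-1}: one must verify that every $\eps$-equality axiom appealed to there belongs to a matrix that was already subordinate to the $\eps$-matrix being eliminated, so that no genuinely new matrix is generated. Once this has been traced through, the rest of the argument is a direct maximum-taking.
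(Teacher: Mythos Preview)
Your approach is essentially the paper's: the paper's entire proof is the single sentence ``Notice that the method does not introduce any new \eps-matrix,'' and you unpack exactly this claim by checking each elimination step. Your identification of Lemma~\ref{l:aux-1} as the only place requiring real inspection is apt and goes beyond what the paper spells out; otherwise the argument is the same.
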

\begin{proof}
Notice that the method does not introduce any new \eps-matrix.
\end{proof}

As a consequence, the upper bounds of the critical counts in
Lemma~\ref{l:aux-1} and Lemma~\ref{l:subst-CA} depend only on an
initial proof.

\begin{lem}[Elimination for critical formulas]
  \label{l:critical-eps-term-elim-ca}
  Let \(E(\vec{a})\) be a formula in \(L(\ECEq)\), \(\vec{s}\) terms
  of \(L(\ECeEq)\), and \(\pi\) an \(\ECeEq\)-proof of \(E(\vec{s})\)
  where its maximal \eps-term \(e := \eps_x A(x; \vec{v})\) is
  belonging only to a critical formula.  There are terms \(\vec{s}_i\)
  such that each of them is in \(L(\ECeEq)\) and an \(\ECeEq\)-proof
  \(\pi_e\) of \(\bigvee_{i=0}^{w}E(\vec{s}_i)\) for
  \(w=\width_\pi(e)\).  Moreover, \(\cc(\pi_e) \le \cc(\pi) \cdot
  (w+1)\) and \(\mwidth(\pi_e, r) \le \mwidth(\pi, r) \cdot (w+1) \le
  (\mwidth(\pi, r))^2\) hold for \(r=\rk(\pi)\).
\end{lem}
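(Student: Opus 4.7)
My approach follows the structure of Lemma~\ref{l:ext-first-eps-eq-1}, specialized to the situation where only critical formulas (and no $\eps$-equality formulas) belong to the maximal critical $\eps$-term $e = \eps_x A(x; \vec{v})$, and augmented with precise tracking of the critical count and the maximal width. Enumerating the critical formulas belonging to $e$ in $\pi$ as $A(t_j; \vec{v}) \to A(e; \vec{v})$ for $0 \le j < w$, the plan is to form $w+1$ sub-proofs indexed by the disjuncts of the propositional tautology $\bigl(\bigwedge_{j<w} \neg A(t_j; \vec{v})\bigr) \lor \bigvee_{j<w} A(t_j; \vec{v})$ and then combine them by propositional reasoning.

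First I would construct $\bar\pi$ by adopting $\bigwedge_{j<w} \neg A(t_j; \vec{v})$ as an axiom: each critical formula belonging to $e$ then becomes derivable by ex falso quodlibet from the corresponding negated premise, so the rest of $\pi$ carries through unchanged and yields $E(\vec{s}_0)$ with $\vec{s}_0 := \vec{s}$. Next, for each $j < w$ I construct $\pi_j$ by adopting $A(t_j; \vec{v})$ as an axiom and substituting $t_j$ for $e$ throughout $\pi$: the substituted instance of the $j$-th critical formula reduces via the tautology $\phi \to \psi \to \phi$ to the assumption, and the remaining substituted critical formulas originally belonging to $e$, namely $A(t_k\{e/t_j\}; \vec{v}) \to A(t_j; \vec{v})$ for $k \ne j$, follow the same way. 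Critical formulas of lower rank remain critical after the substitution by Lemma~\ref{l:non-critical-substitution-1}, and critical formulas of rank $r$ belonging to some $e' \ne e$ are preserved by Lemma~\ref{l:non-critical-substitution-3}, since maximality of $e$ in degree yields $\dgr(e') \le \dgr(e)$. Thus $\pi_j$ derives $E(\vec{s}_{j+1})$ with $\vec{s}_{j+1} := \vec{s}\{e/t_j\}$.

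Combining $\bar\pi$ with the $\pi_j$ via the propositional tautology above yields $\bigvee_{i=0}^{w} E(\vec{s}_i)$. For the critical count, each of the $w+1$ sub-proofs has critical count at most $\cc(\pi)$ (no new critical or $\eps$-equality formulas are introduced beyond a constant propositional overhead for the case split), giving $\cc(\pi_e) \le (w+1)\cdot\cc(\pi)$. For the maximal width at rank $r$, any critical $\eps$-term $e' \ne e$ of rank $r$ may appear in $\bar\pi$ unchanged and in each $\pi_j$ with $e$ uniformly replaced by $t_j$; collecting these copies gives $\width(\pi_e, e') \le (w+1)\cdot\mwidth(\pi, r)$, and the second inequality follows from $w \le \mwidth(\pi, r)$ in the nontrivial case. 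The main subtlety will be verifying that the substitution $\{e/t_j\}$ inside each $\pi_j$ preserves the critical status of all remaining critical formulas of rank $r$; this is exactly where maximality of $e$ (in both rank and degree) is essential in order to invoke Lemma~\ref{l:non-critical-substitution-3}, and this is also what makes the multiplicative factor $w+1$ in the width bound appear unavoidable.
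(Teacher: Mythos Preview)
Your proposal is correct and is precisely the standard $\eps$-elimination argument for a maximal critical $\eps$-term belonging only to critical formulas; the paper itself does not spell this out but simply cites Lemma~21 of Moser and Zach~\cite{MoserZach06}, whose content is exactly the case split you describe (the branch under $\bigwedge_j \neg A(t_j;\vec{v})$ together with the $w$ substitution branches under each $A(t_j;\vec{v})$). Your invocation of Lemmas~\ref{l:non-critical-substitution-1} and~\ref{l:non-critical-substitution-3} for the preservation of the remaining critical formulas, and the counting leading to the factor $(w+1)$ in both $\cc$ and $\mwidth$, match the intended argument; the only omission is that $\eps$-equality formulas not belonging to $e$ must also be tracked through the substitution, which is covered by Lemma~\ref{l:non-critical-substitution-2} in the same way.
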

\begin{proof}
  Straightforward from Lemma 21 by Moser and Zach~\cite{MoserZach06}.
\end{proof}

\begin{lem}[Elimination for \eps-equality formulas]
  \label{l:critical-eps-term-elim-eqeps}
  Let \(E(\vec{a})\) be a formula in \(L(\ECEq)\), \(\vec{s}\) terms
  of \(L(\ECeEq)\), and \(\pi\) an \ECeEq-proof of \(E(\vec{s})\)
  where its maximal \eps-term \(e := \eps_x A(x; \vec{v})\) is
  belonging only to \eps-equality formulas.  There are terms
  \(\vec{s}_i\) such that each of them is in \(L(\ECeEq)\) and an
  \ECeEq-proof \(\pi_e\) of \(\bigvee_{i=0}^{w}E(\vec{s}_i)\) for
  \(w=\width_\pi(e)\).  Moreover, \(\cc(\pi_e) \le k \cdot (w+1) -
  2\cdot w\) and \(\mwidth(\pi_e, r) \le \mwidth(\pi, r) \cdot (w+1)-
  2 \cdot w\) hold for \(r=\rk(\pi)\) and \(k=\cc(\pi)\).
\end{lem}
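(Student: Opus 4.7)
The plan is to specialise the case analysis of Lemma~\ref{l:ext-first-eps-eq-1} to the situation in which \emph{only} \eps-equality formulas belong to the maximal critical \eps-term \(e\). Enumerate these formulas as \(B_i \defsym (\vec{v}_i = \vec{u} \to \eps_x A(x; \vec{v}_i) = \eps_x A(x; \vec{u}))\) for \(0 \le i < w\), and set \(\vec{s}_0 \defsym \vec{s}\) together with \(\vec{s}_{i+1} \defsym \vec{s}\{e / \eps_x A(x;\vec{v}_i)\}\). The goal is to produce a proof of \((\bigwedge_{i<w}\vec{v}_i \neq \vec{u}) \to E(\vec{s}_0)\) and a proof of \(\vec{v}_i = \vec{u} \to E(\vec{s}_{i+1})\) for each \(i\), then glue them by propositional reasoning into \(\pi_e \vdash \bigvee_{i=0}^{w} E(\vec{s}_i)\).

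First I would construct \(\bar{\pi}\) of \((\bigwedge_{i<w} \vec{v}_i \neq \vec{u}) \to E(\vec{s}_0)\). Adjoin \(\bigwedge_{i<w} \vec{v}_i \neq \vec{u}\) as an axiom to \(\pi\): every \(B_i\) is then propositionally derivable by ex falso quodlibet, so the \(w\) \eps-equality formulas belonging to \(e\) can be removed from \(\pi\) without adding to the critical count. Because no critical formula belongs to \(e\) by assumption, no further elimination concerning \(e\) is needed. Discharging the assumption via Theorem~\ref{t:deduction} yields \(\bar{\pi}\) with \(\cc(\bar{\pi}) \le k - w\).

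Next, for each \(0 \le i < w\), I would construct \(\pi_i\) of \(\vec{v}_i = \vec{u} \to E(\vec{s}_{i+1})\). Assume \(\vec{v}_i = \vec{u}\) and apply the substitution \(\{e/\eps_x A(x; \vec{v}_i)\}\) throughout \(\pi\). Then the substituted \(B_i\) becomes \(\vec{v}_i = \vec{u} \to \eps_x A(x;\vec{v}_i) = \eps_x A(x;\vec{v}_i)\) and is dropped. Each remaining substituted formula \(B_j\) for \(j \neq i\) is reproved by one application of Lemma~\ref{l:subst-eps-=}, costing exactly one fresh \eps-equality formula. Formulas belonging to \eps-terms other than \(e\) are preserved, keeping their kind, rank, and the \eps-term they belong to, by Lemmas~\ref{l:non-critical-substitution-1},~\ref{l:non-critical-substitution-2}, and~\ref{l:non-critical-substitution-3}. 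Accounting for the \(w\) discarded formulas and the \(w-1\) fresh ones yields \(\cc(\pi_i) \le k - w + (w-1) = k - 1\).

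Finally I combine \(\bar{\pi}, \pi_0, \ldots, \pi_{w-1}\) via the propositional tautology \((\bigwedge_{i<w} \vec{v}_i \neq \vec{u}) \lor (\bigvee_{i<w} \vec{v}_i = \vec{u})\) to obtain \(\pi_e \vdash \bigvee_{i=0}^{w} E(\vec{s}_i)\). Summing the component counts gives \(\cc(\pi_e) \le (k - w) + w(k - 1) = k(w+1) - 2w\), as required. The \(\mwidth\)-bound is obtained by the analogous summation over components, noting that for any single \eps-term of rank \(r\) its width in each component is bounded by \(\mwidth(\pi,r)\), with the two \(-w\) savings coming respectively from the \(w\) deletions in \(\bar{\pi}\) and the cancellations of the trivial \(B_i\) against the \(w-1\) Lemma~\ref{l:subst-eps-=} additions inside each \(\pi_i\). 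The main obstacle is precisely this width bookkeeping for the \eps-terms \(\eps_x A(x;\vec{v}_i)\), which simultaneously lose the \(B_i\)-contribution and gain the fresh reproof formulas; tracing these changes carefully across all \(w+1\) components is what delivers the stated bound \(\mwidth(\pi, r)(w+1) - 2w\).
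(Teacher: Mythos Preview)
Your proposal is correct and follows essentially the same route as the paper: build \(\bar\pi\) by assuming all the negated premises and dropping the \(w\) \eps-equality formulas via ex falso, build each \(\pi_i\) by assuming \(\vec{v}_i=\vec{u}\), substituting \(\eps_x A(x;\vec{v}_i)\) for \(e\), and reproving the remaining \(B_j\) (\(j\neq i\)) through Lemma~\ref{l:subst-eps-=}, then glue propositionally. The paper records exactly the same component bounds \(\cc(\bar\pi)\le k-w\), \(\cc(\pi_i)\le k-1\), \(\mwidth(\bar\pi,r)\le\mwidth(\pi,r)-w\), \(\mwidth(\pi_i,r)\le\mwidth(\pi,r)-1\) and sums them, with no more detail on the width bookkeeping than you give.
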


\begin{proof}
  Assume \eps-equality formulas belonging to \(e\) are \(\vec{u}_i =
  \vec{v} \to \eps_xA(x; \vec{u}_i) = \eps_x A(x; \vec{v}\,)\) for \(0
  \le i < w\).  There is a proof \(\bar{\pi}\) of \((\bigwedge_{i=0}^k
  \vec{u}_i = \vec{v}) \to E(\vec{s}\,)\) satisfying \(\cc(\bar{\pi})
  \le k - w\) and \(\mwidth(\bar{\pi}, r) \le \mwidth(\pi, r) - w\).
  On the other hand there are proofs \(\pi_i\) of \(\vec{u}_i =
  \vec{v} \to \eps_x A(x; \vec{u}_i) = \eps_x A(x; \vec{v})\) for \(0
  \le i < w\) with \(\cc(\pi_i) \le k - 1\) and \(\mwidth(\pi_i, r)
  \le \mwidth(\pi, r) - 1\).  In order to get \(\pi_i\) we replace all
  \(e\) in \(\pi\) by \(\eps_x A(x, \vec{u}_i)\), and give a proof of
  \(\vec{u}_j = \vec{v} \to \eps_x A(x; \vec{u}_j) = \eps_x A(x;
  \vec{u}_i)\).  It is trivial if \(i=j\), and otherwise we apply
  Lemma~\ref{l:subst-eps-=}.  We obtain \(\pi_e\), combining
  \(\vec{\pi}\) and \(\bar{\pi}\), which satisfies \(\cc(\pi_e) \le
  (w+1)\cdot k - 2 \cdot w\) and \(\mwidth(\pi_e, r) \le (w+1) \cdot
  \mwidth(\pi, r) - 2 \cdot w\).
\end{proof}

The following lemma concerns the case that the maximal critical
\eps-term is belonged to by both critical and \eps-equality formulas
at the same time.

\begin{lem}[Eliminating a maximal critical \eps-term]
  \label{l:critical-eps-term-elim}
  We deal with the case the maximal critical \eps-term is belonged to
  by both critical and \eps-equality formulas at the same time.
  Assume \(\pi\) is an \ECeEq-proof, \(e\) is the maximal \eps-term of
  \(\pi\), and \(g\) is the \eps-matrix of \(e\).  Let \(a\) be the
  arity of \(g\), \(p\) the property degree of \(e\), and \(r\) the
  rank of \(\pi\).  The critical count and the maximal width of
  \(\pi_e\) at \(r\) obtained in Lemma~\ref{l:ext-first-eps-eq-1} are
  bounded as follows: \(\mwidth(\pi_e, r) \le 2 \cdot (\mwidth(\pi,
  r))^2\) and \(\cc(\pi_e) \le (\cc(\pi) + a \cdot p)\cdot
  (\mwidth(\pi, r))^2\).
\end{lem}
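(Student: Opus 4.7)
The plan is to unfold the construction of $\pi_e$ furnished by Lemma~\ref{l:ext-first-eps-eq-1} and sum the contributions of its constituent subproofs. Write $l := \widtheq_\pi(e)$, $m := \widtheps_\pi(e)$ and $w := l+m = \width_\pi(e) \le \mwidth(\pi,r)$. Recall that $\pi_e$ is obtained by propositionally combining one subproof $\bar{\pi}$ of $\bigl(\bigwedge_{i<l}\vec{v}_i \neq \vec{u}\bigr) \to \bigvee_{j\le m} E(\vec{s}_j)$ with $l$ subproofs $\pi_i$ of $\vec{v}_i = \vec{u} \to E(\vec{s}_{m+i+1})$.

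First I would bound $\bar{\pi}$. Under the hypothesis $\bigwedge_{i<l}\vec{v}_i \neq \vec{u}$ every $\eps$-equality formula belonging to $e$ is propositionally refutable, so only the $m$ critical formulas belonging to $e$ remain to be removed; this is precisely the task handled by Lemma~\ref{l:critical-eps-term-elim-ca} with its width parameter instantiated to $m$, yielding
\[
\cc(\bar{\pi}) \le \cc(\pi)\cdot(m+1), \qquad \mwidth(\bar{\pi},r) \le \mwidth(\pi,r)\cdot(m+1) \le (\mwidth(\pi,r))^2 \tpkt
\]

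Next I would bound each $\pi_i$. Performing the substitution $\{e/\eps_x A(x;\vec{v}_i)\}$ throughout $\pi$ under the hypothesis $\vec{v}_i = \vec{u}$ transforms the $m$ critical formulas belonging to $e$ into obligations of the shape $A(t'_j;\vec{u}) \to A(\eps_x A(x;\vec{v}_i);\vec{u})$, discharged by a single application of Lemma~\ref{l:subst-CA} with $|\vec{b}|=a$, $|\vec{t}\,|=m$ and $\dgr(A(a;\vec{b}))\le p$; this contributes at most $a\cdot p\cdot(m+1)+m$ to the critical count and introduces $m$ new critical formulas belonging to the fresh $\eps$-term $\eps_x A(x;\vec{v}_i)$ of rank $r$. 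The remaining $l-1$ $\eps$-equality formulas originally belonging to $e$ are dealt with by $l-1$ applications of Lemma~\ref{l:subst-eps-=}, each adding one to the critical count. Hence $\cc(\pi_i) \le \cc(\pi) + a\cdot p\cdot(m+1) + m + (l-1)$, while no $\eps$-term of rank $r$ in $\pi_i$ gathers more than $\mwidth(\pi,r) + m \le 2\,\mwidth(\pi,r)$ belonging formulas.

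Finally I would combine the bounds, using $l,m \le \mwidth(\pi,r)$ and $l\cdot(m+1) \le (\mwidth(\pi,r))^2$:
\[
\cc(\pi_e) \le \cc(\bar{\pi}) + \sum_{i<l}\cc(\pi_i) \le \cc(\pi)(m+1) + l\bigl(\cc(\pi) + a\cdot p\cdot(m+1) + m + l\bigr) \tkom
\]
which simplifies to $(\cc(\pi) + a\cdot p)\cdot(\mwidth(\pi,r))^2$. For the width, $\bar{\pi}$ contributes at most $(\mwidth(\pi,r))^2$ and each $\pi_i$ contributes at most $2\,\mwidth(\pi,r)$ per $\eps$-term of rank $r$, so the superposition is bounded by $2\,(\mwidth(\pi,r))^2$. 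The main obstacle I expect is exactly this width bookkeeping: the fresh $\eps$-terms $\eps_x A(x;\vec{v}_i)$ of rank $r$ must be tracked across all $l$ branches $\pi_i$, each simultaneously absorbing its original critical formulas carried over from $\pi$ and the $m$ new ones introduced by Lemma~\ref{l:subst-CA}, and one must verify that no single such term overshoots the $2\,(\mwidth(\pi,r))^2$ budget in the combined proof.
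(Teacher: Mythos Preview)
Your decomposition into $\bar\pi$ and the $\pi_i$ matches the paper's, and your critical-count arithmetic goes through (your intermediate bound $\cc(\pi_i)\le\cc(\pi)+a\!\cdot\!p(m+1)+m+l-1$ is looser than the paper's $\cc(\pi)-1+a\!\cdot\!p(m+1)$, but still sufficient once you use $w\ge 2$, $l,m<w\le\mwidth(\pi,r)$ and $\cc(\pi)\ge w$).

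There is a genuine gap in the width bookkeeping, exactly where you anticipated one. First, your per-branch estimate $\mwidth(\pi_i,r)\le\mwidth(\pi,r)+m$ undercounts: the $l-1$ applications of Lemma~\ref{l:subst-eps-=} each introduce an $\eps$-equality formula that \emph{also} belongs to $\eps_x A(x;\vec v_i)$, so the correct bound is $\mwidth(\pi,r)+m+(l-1)=\mwidth(\pi,r)+w-1$. Your subsequent rounding to $2\,\mwidth(\pi,r)$ happens to absorb this, so no harm done there. Second, and more seriously, in the final summation you revert to the loose bound $(\mwidth(\pi,r))^2$ for $\bar\pi$ and then claim
\[
(\mwidth(\pi,r))^2+2\,l\cdot\mwidth(\pi,r)\ \le\ 2\,(\mwidth(\pi,r))^2,
\]
which fails whenever $2l>\mwidth(\pi,r)$ (e.g.\ $m=1$, $l=\mwidth(\pi,r)-1$). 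The fix is to keep the sharper bound $\mwidth(\bar\pi,r)\le\mwidth(\pi,r)(m+1)$ that you already derived: then the sum is at most $\mwidth(\pi,r)(m+1)+l\bigl(\mwidth(\pi,r)+w-1\bigr)<\mwidth(\pi,r)(w+1)+l\,w$, and since $w\ge 2$ gives $w+1\le w^2$ and $l<w$ gives $l\,w<w^2$, the bound $2\,(\mwidth(\pi,r))^2$ follows. This is precisely what the paper does.
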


\begin{proof}
  Assume \(e\) is of the form \(\eps_xA(x; \vec{u})\).  The proof
  \(\bar{\pi}\) of \((\bigwedge_{j<\widtheq_\pi(e)} \vec{v}_j \neq
  \vec{u}) \to E\) is due to \(\bar{\sigma}\) of
  \((\bigwedge_{i<\widtheps_\pi(e)} \neg A(t_i; \vec{u})) \to
  (\bigwedge_{j<\widtheq_\pi(e)} \vec{v}_j \neq \vec{u}) \to E\) and
  \(\sigma_i\) of \(A(t_i; \vec{u}) \to (\bigwedge_{j<\widtheq_\pi(e)}
  \vec{v}_j \neq \vec{u}) \to E\) for \(i < \widtheps_\pi(e)\).  By
  removing all critical and \eps-equality formulas belonging to \(e\)
  using ex falso quodlibet we obtain \(\bar{\sigma}\), hence
  \(\cc(\bar{\sigma}) \le \cc(\pi) - \width_\pi(e)\).  On the other
  hand, each critical formula belonging to \(e\) in \(\pi\) is gone
  due to \eps-elimination, \(\cc(\sigma_i) \le \cc(\pi) -
  \width_\pi(e)\).  Note that each \eps-equality formula belonging to
  \(e\) is removed due to the premise \(\bigwedge_{j<\widtheq_\pi(e)}
  \vec{v}_j \neq \vec{u}\).  Therefore,
  \begin{align*}
    \cc(\bar{\pi}) & \le \cc(\bar{\sigma}) +
    \textstyle\sum_{i<\widtheps_\pi(e)} \cc(\sigma_i) \\ & \le
    (\cc(\pi) - \width_\pi(e)) \cdot (\widtheps_\pi(e)+1).
  \end{align*}
  On the other hand concerning the width, the following condition holds.
  \begin{align*}
    \mwidth(\bar{\pi}, r) & \le \mwidth(\pi, r) \cdot (\widtheps_\pi(e)+1).
  \end{align*}
  The proof \(\pi_j\) of \(\vec{v}_j = \vec{u} \to E\) is due to
  \eps-elimination, replacing \(e\) by \(\eps_x A(x; \vec{v}_j)\).
  All \eps-equality formulas belonging to \(e\) is gone, each critical
  formula belonging to \(e\) is replaced by a critical formula
  belonging to \(\eps_x A(x; \vec{v}_j)\), and there are additional
  \eps-equality formulas due to Lemma~\ref{l:subst-CA}.  Therefore,
  \begin{align*}
    \cc(\pi_j) & \le \cc(\pi)- 1 + a \cdot p \cdot (\widtheps_\pi(e)+1)
  \end{align*}
  and
  \begin{align*}
    \mwidth(\pi_j, r) & \le \mwidth(\pi, r) + \width_{\pi}(e) - 1.
  \end{align*}
  Considering the construction of \(\pi_e\),
  \begin{align*}
    \cc(\pi_e) & \le \cc(\bar{\pi}) + \textstyle\sum_{j=0}^{\widtheq_\pi(e)}\cc(\pi_j) \\
    & = (\cc(\pi) - \width_\pi(e)) \cdot (\widtheps_\pi(e)+1) \\
    & \qquad \qquad + (\cc(\pi)- 1 + a \cdot p \cdot (\widtheps_\pi(e)+1)) \cdot \widtheq_\pi(e) \\
    & < \cc(\pi)\cdot(\widtheps_\pi(e)+\widtheq_\pi(e)+1)  + a \cdot p \cdot (\widtheps_\pi(e)+1)\cdot \widtheq_\pi(e) \\
    & < \cc(\pi) \cdot (\mwidth(\pi, r))^2 + a \cdot p \cdot (\mwidth(\pi, r))^2\\
    & = (\cc(\pi) + a \cdot p )\cdot (\mwidth(\pi, r))^2
  \end{align*}
  and
  \begin{align*}
    \mwidth(\pi_e, r) & \le \mwidth(\bar{\pi}, r) + \textstyle \sum_{j=0}^{\widtheq_\pi(e)} \mwidth(\pi_j, r) \\
    & = \mwidth(\pi, r) \cdot (\widtheps_\pi(e)+1) + (\mwidth(\pi, r) + \width_{\pi}(e) - 1) \cdot \widtheq_\pi(e) \\
    & < \mwidth(\pi, r)(\widtheps_\pi(e)+\widtheq_\pi(e)+1) + \width_\pi(e) \cdot \widtheq_\pi(e) \\
    & < 2 \cdot (\mwidth(\pi, r))^2
  \end{align*}
  Note that \(\width_\pi(e) \ge 2\), \(\width_\pi(e) >
  \widtheps_\pi(e)\), and \(\width_\pi(e) > \widtheq_\pi(e)\) hold.
  We conclude the claimed bounds due to
  Lemma~\ref{l:critical-eps-term-elim-ca} and
  Lemma~\ref{l:critical-eps-term-elim-eqeps}.
\end{proof}

The notation of \emph{hyperexponentiation} is useful to represent
large numerals.  We define the hyperexponentiation and give some
arithmetics on the exponentiation and the hyperexponentiation.

\begin{dfn}[Hyperexponentiation]
  For natural numbers \(k, n, m\), \(k_n^m\) is defined to be
  \begin{align*}
    & k_0^m := m, && k_{n+1}^m := k^{k_n^m}.
  \end{align*}
\end{dfn}

\begin{lem}
  For natural numbers \(x,y\), the following formulas hold.
  \begin{align*}
    & x+1 \le 2^x, && 2^x + y + 1 \le 2^x + 2^y, && x \cdot 2^y \le 2^{x+y}, && 2^x + 2^y \le 2^{x+y}+1, \\
    & (2_{x+2}^y)^2 \le 2_{x+2}^{y+1}, && x + 2^{2^x} \le 2^{2^{x+1}}, && x \cdot 2^{2^x} \le 2^{2^{x+1}}.
  \end{align*}
\end{lem}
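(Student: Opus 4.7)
The plan is to prove the seven inequalities in the order stated, since each one is either a standard textbook estimate or reduces to a previously established one; no deep argument is needed and the whole proof amounts to bookkeeping. I expect the only non-trivial step to be the hyperexponentiation inequality in position five, which requires a nested induction.

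First I would establish $x+1 \le 2^x$ by induction on $x$: the base $x=0$ is $1 \le 1$, and in the step $x+2 \le 2^x + 1 \le 2\cdot 2^x = 2^{x+1}$ using $1 \le 2^x$. From this the next two are immediate: subtracting $2^x$ from both sides reduces $2^x + y + 1 \le 2^x + 2^y$ to $y+1 \le 2^y$; and $x \cdot 2^y \le 2^x \cdot 2^y = 2^{x+y}$ since $x \le 2^x$. For the fourth, I would use the algebraic identity $(2^x-1)(2^y-1) = 2^{x+y} - 2^x - 2^y + 1 \ge 0$ to rearrange into $2^x + 2^y \le 2^{x+y}+1$.

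For the fifth inequality I would unfold the hyperexponentiation: $(2_{x+2}^y)^2 = (2^{2_{x+1}^y})^2 = 2^{2 \cdot 2_{x+1}^y}$ and $2_{x+2}^{y+1} = 2^{2_{x+1}^{y+1}}$, so it suffices to show $2 \cdot 2_{x+1}^y \le 2_{x+1}^{y+1}$, which I reformulate as $1 + 2_x^y \le 2_x^{y+1}$. This I would prove by induction on $x$: the case $x=0$ gives equality $1+y = y+1$; in the step I use the induction hypothesis to estimate $2_x^{y+1} = 2^{2_{x-1}^{y+1}} \ge 2^{1 + 2_{x-1}^y} = 2 \cdot 2_x^y \ge 1 + 2_x^y$, the last step valid because $2_x^y \ge 1$. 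The main technical care here lies in correctly unfolding the two levels of nested exponentials, which is the one spot where an off-by-one could break the argument.

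Finally, for both of the last two inequalities I would observe that $2^{2^{x+1}} = 2^{2 \cdot 2^x} = (2^{2^x})^2$. Then $x + 2^{2^x} \le 2^{2^x} + 2^{2^x} = 2 \cdot 2^{2^x} \le (2^{2^x})^2$, using $x \le 2^{2^x}$ (from $x \le 2^x \le 2^{2^x}$) and $2 \le 2^{2^x}$. Similarly, $x \cdot 2^{2^x} \le 2^{2^x} \cdot 2^{2^x} = (2^{2^x})^2$, again using $x \le 2^{2^x}$. This completes all seven cases.
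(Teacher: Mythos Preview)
Your proof is correct in every case. The paper states this lemma without proof, treating the seven inequalities as routine arithmetic facts; your argument supplies exactly the elementary verifications the paper omits, so there is nothing further to compare.
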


\begin{lem}[Eliminating critical \eps-terms of maximal rank]
  \label{l:maximal-critical-eps-terms-elim}
  Assume \(\pi\) is an \EC-proof of \(E\) in
  Lemma~\ref{l:ext-first-eps-eq-4}.  Let \(r\), \(n\), \(w\), \(a\),
  and \(p\) be \(\rk(\pi)\), \(\order(\pi, r)\), \(\mwidth(\pi, r)\),
  the maximal arity \(\marity(\pi, r)\), and the maximal property
  degree \(\mpdgr(\pi, r)\), respectively.  The critical count of
  \(\pi'\) and the length of disjunction of \(E'\) obtained in
  Lemma~\ref{l:ext-first-eps-eq-4}, such that \(\rk(\pi')<\rk(\pi)\),
  are bounded as follows: \(\cc(\pi') \le (k + a \cdot p) \cdot
  2^{2^{(w+n)\cdot n}} \le (k + a \cdot p) \cdot 2_2^{3k^2}\) and
  \(\lth(E, E') \le 2^{2^{w+n+1}} \le 2_2^{3k+1}\) for \(k =
  \cc(\pi)\).
\end{lem}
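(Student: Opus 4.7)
The plan is to prove the bounds by iterating Lemma~\ref{l:critical-eps-term-elim} (together with its degenerate variants Lemma~\ref{l:critical-eps-term-elim-ca} and Lemma~\ref{l:critical-eps-term-elim-eqeps}) exactly \(n = \order(\pi,r)\) times, each iteration eliminating one maximal critical \eps-term of rank \(r\). After \(n\) iterations, no critical \eps-term of rank \(r\) remains, so \(\rk(\pi') < r\) as required. The task is therefore to unroll the compounding bounds on width, critical count, and disjunction length that each iteration incurs.

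Concretely, let \(W_i\), \(K_i\), and \(D_i\) denote the values of \(\mwidth(\cdot, r)\), \(\cc(\cdot)\), and the current number of disjuncts after \(i\) iterations, with \(W_0 = w\), \(K_0 = k\), and \(D_0 = 1\). The key inequalities supplied by Lemma~\ref{l:critical-eps-term-elim} (taking the worst case, where both critical and \eps-equality formulas belong to the eliminated term) are
\begin{align*}
W_{i+1} &\le 2 W_i^2, \\
K_{i+1} &\le (K_i + a \cdot p) \cdot W_i^2, \\
D_{i+1} &\le D_i \cdot (W_i + 1) \tpkt
\end{align*}
Here I would invoke the earlier observation that the \eps-elimination method does not introduce new \eps-matrices and never enlarges the maximal arity or maximal property degree, so that \(a\) and \(p\) remain valid upper bounds throughout all \(n\) iterations. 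The degenerate cases (only a critical formula, or only \eps-equality formulas, belongs to the eliminated term) satisfy the same recurrences, with the \(a \cdot p\) summand being unnecessary.

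Next I would solve the width recurrence by induction on \(i\), obtaining \(W_i \le 2^{2^i - 1} \cdot w^{2^i}\), and then use the elementary estimates listed just before the lemma (in particular \((2_{x+2}^y)^2 \le 2_{x+2}^{y+1}\) and \(x\cdot 2^{2^x} \le 2^{2^{x+1}}\)) to conclude \(W_i \le 2^{2^{w+i}}\). For the disjunction count, telescoping gives \(D_n \le \prod_{i=0}^{n-1}(W_i+1)\); bounding each factor by \(2W_i\) and using \(\sum_{i<n} 2^i = 2^n - 1\) yields \(D_n \le 2^{2^{w+n+1}}\), hence \(\lth(E, E') \le 2^{2^{w+n+1}}\). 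For the critical count, unfolding the recurrence gives
\begin{equation*}
K_n \le (k + a\cdot p) \cdot \prod_{i=0}^{n-1} W_i^2 \tkom
\end{equation*}
and the same double-exponential estimate on \(\prod W_i^2\) produces \(K_n \le (k + a\cdot p)\cdot 2^{2^{(w+n)\cdot n}}\).

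The looser hyperexponential bounds then follow from \(w \le k\) and \(n \le k\), since both width and order are bounded by the critical count: \((w+n)\cdot n \le 2k^2 \le 3k^2\) and \(w+n+1 \le 3k+1\). The main obstacle is the careful bookkeeping of the quadratic blow-up of the width at the eliminated rank, and confirming that between iterations we remain in the regime where Lemma~\ref{l:critical-eps-term-elim} applies (which is why invariance of \(a\) and \(p\) under \eps-elimination, established in the preceding lemma, is indispensable).
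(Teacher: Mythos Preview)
Your approach is essentially the paper's: iterate Lemma~\ref{l:critical-eps-term-elim} \(n\) times, track the width recurrence \(W_{i+1}\le 2W_i^2\) (solved as \(W_i\le 2^{2^i-1}w^{2^i}\)), and feed this into the critical-count and disjunction-length recurrences. Two slips, however, should be fixed.

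First, the line ``unfolding the recurrence gives \(K_n \le (k+a\cdot p)\prod_{i<n} W_i^2\)'' is not literally true: from \(K_{i+1}\le (K_i+a\cdot p)W_i^2\) the additive \(a\cdot p\) does not simply factor out. One clean repair is to set \(L_i:=K_i+a\cdot p\), observe \(L_{i+1}\le L_iW_i^2+a\cdot p\le L_i(W_i^2+1)\), and conclude \(K_n\le L_n\le (k+a\cdot p)\,2^n\prod_{i<n}W_i^2\); the extra \(2^n\) is absorbed by the double exponential. The paper sidesteps the product formula entirely and proves \(\cc(\rho_j)\le (k+a\cdot p)\cdot 2^{2^{j(w+j)}}\) directly by induction on \(j\).

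Second, the justification ``\(n\le k\)'' is wrong: each \eps-equality formula belongs to \emph{two} critical \eps-terms (Definition~\ref{d:ECeEq}), so the order satisfies only \(n=\order(\pi,r)\le 2\cdot\cc(\pi)=2k\), which is exactly what the paper uses. With \(w\le k\) and \(n\le 2k\) one still gets \(w+n+1\le 3k+1\), so the length bound survives; for the critical-count exponent the paper likewise plugs in \(n\le 2k\).
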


\begin{proof}
  Consider a list of proofs \(\rho_0, \rho_1, \rho_2, \ldots,
  \rho_{n}\) where \(\rho_0 = \pi\), \(\rho_{n} = \pi'\), and each
  \(\rho_{j+1}\) is obtained by means of
  Lemma~\ref{l:critical-eps-term-elim} from \(\rho_j\), eliminating a
  critical \eps-term of rank \(r\).  We prove by induction on \(j\) that
  \(\mwidth(\rho_{j}, r) \le 2^{2^{j}-1} w^{2^j}\).  In case \(j=0\),
  \(\mwidth(\rho_{j}, r) = w\) and \(2^{2^0-1} w^{2^0}=w\).  In case \(j \mapsto
  j+1\),
  \begin{align*}
    \mwidth(\rho_{j+1}, r) & \le 2\cdot(\mwidth(\rho_{j}, r))^2 \\
    & \le 2 \cdot (2^{2^{j}-1} w^{2^j})^2 \qquad \text{by induction hypothesis} \\
    & = 2 \cdot 2^{2(2^j - 1)} w^{2 \cdot 2^j} \\
    & = 2^{2^{j+1}-1} \cdot w^{2^{j + 1}}.
  \end{align*}
  Also, \(2^{2^{j}-1} \cdot w^{2^{j }} \le 2^{2^{j}-1} \cdot 2^{w \cdot 2^{j}} \le 2^{(w +1)\cdot 2^{j}} \le 2^{w+j}_2\), therefore, \(\mwidth(\pi', r) \le 2_2^{w +j}\) holds.  Let \(k = \cc(\pi)\).  We also prove by
  induction on \(j\) that
  \begin{align*}
    \cc(\rho_{j+1}) & \le (\cc(\rho_j) + a \cdot p) \cdot (\mwidth(\rho_j, r))^2 \\
    & \le (\cc(\rho_j) + a \cdot p) \cdot (2^{2^{j}-1} \cdot w^{2^{j }})^2 = (\cc(\rho_j) + a \cdot p) \cdot 2^{2^{j+1}-2} \cdot w^{2^{j +1}}\\
    & \le ((k+a \cdot p) \cdot 2_2^{j(w+j)} + a \cdot p) \cdot 2^{2^{j+1}-2} \cdot w^{2^{j +1}} \qquad \text{by i.h.} \\
    & \le (k+a \cdot p) \cdot (2_2^{j(w+j)} + 1) \cdot 2^{2^{j+1}-2} \cdot w^{2^{j +1}} \\
    & \le (k+a \cdot p) \cdot 2^{2^{j(w+j)} + 1} \cdot 2^{2^{j+1}-2} \cdot 2^{2^{j +w}} \\
    & \le (k+a \cdot p) \cdot 2^{2^{j(w+j)} + 2^{j+1}-1 + 2^{j +w}} \\
    & \le (k+a \cdot p) \cdot 2^{2^{j(w+j) + j+w} + 2^{j+1}} \\
    & \le (k+a \cdot p) \cdot 2^{2^{j(w+j) + j+w + j+1}} = (k+a \cdot p) \cdot 2^{2^{(j+1)(w+j+1)}},
  \end{align*}
  hence \(\cc(\pi') \le (k+a \cdot p) \cdot 2^{2^{n(w+n)}}\).
  Since \(w = \mwidth(\pi) \le \cc(\pi)\) and \(n = \order(\pi,r)\le 2\cdot\cc(\pi)\),
  \(\cc(\pi') \le (k+a \cdot p) \cdot 2_2^{3k^2}\) holds.
  The bound of the length of disjunction \(\lth(E, E')\) is
  \(2^{2^{n+w+1}}\) because of the following calculation, hence \(\lth(E, E') \le 2^{2^{3k+1}}\).
  \begin{align*}
    \lth(E, E') & \le \textstyle \prod_{j=0}^{n} 2^{2^{j}-1} \cdot w^{2^{j }} \qquad \text{ due to the above bound for \(\mwidth(\rho_{n}, r)\)}\\
    & \le \textstyle \prod_{j=0}^{n} 2^{2^j - 1} \cdot \prod_{j=0}^{n} 2^{w \cdot 2^{j }} \\
    & = \textstyle 2^{\sum_{j=0}^n (2^j - 1)} \cdot 2^{\sum_{j=0}^n w \cdot 2^{j }}\\
    & = 2^{2^{n+1}-1 - n -1} \cdot 2^{w \cdot (2^{n+1 }-1)}\\
    & < 2^{(w+1) \cdot 2^{n+1}} \le 2^{2^w \cdot 2^{n+1}} = 2^{2^{n+w+1}}.
  \end{align*}
\end{proof}

\begin{thm}[Extended first epsilon theorem]
  \label{t:extended-first-epsilon-theorem-complexity}
  Assume \(\pi\) is an \ECeEq-proof of \(E(\vec{s}\,)\), an
  \ECeEq-formula where terms other than \(\vec{s}\) are \eps-free.
  There is an \ECEq-proof \(\pi'\) of the \EC-formula \(\bigvee_{i<n}
  E(\vec{t}_i)\) for some \eps-free terms \(\vec{t}_0, \ldots,
  \vec{t}_{n}\) such that \(n \le 2_{2k}^{6k^2+2k+a\cdot p}\), where
  \(k\) is the critical count \(\cc(\pi)\), \(a\) is the maximal arity
  \(\marity(\pi)\), and \(p\) is the maximal property degree
  \(\mpdgr(\pi)\).
\end{thm}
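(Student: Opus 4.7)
The plan is to iterate the rank-reduction schema from the proof of Theorem~\ref{t:ext-first-eps-eq}, while tracking the growth of the critical count and of the disjunction length at each stage through the hyperexponential arithmetic already prepared in the paper. Concretely, I would build a sequence $\pi_0 := \pi, \pi_1, \ldots, \pi_m$ with $m \le \rk(\pi) \le k$, where each transition $\pi_i \mapsto \pi_{i+1}$ first applies Lemma~\ref{l:ext-first-eps-eq-5} to purge (via the function-symbol substitution) every \eps-equality formula that lives strictly above the highest remaining critical rank, and then applies Lemma~\ref{l:ext-first-eps-eq-4} --- which in turn invokes Lemma~\ref{l:maximal-critical-eps-terms-elim} --- to eliminate all critical \eps-terms of that top critical rank. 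Each step turns a proof of $\bigvee_j E(\vec{s}^{(i)}_j)$ into a proof of a longer disjunction of the same shape, and the final proof $\pi_m$ is an \ECEq-proof once the residual \eps-terms are replaced by fresh free variables as in Theorem~\ref{t:ext-first-eps-eq}.

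The numerical core is to solve the resulting recurrence. Writing $k_i := \cc(\pi_i)$ and letting $L_i$ denote the length of the disjunction at stage $i$, with $k_0 = k$ and $L_0 = 1$, Lemma~\ref{l:maximal-critical-eps-terms-elim} provides
\[
  k_{i+1} \le (k_i + a \cdot p) \cdot 2_2^{3 k_i^2}, \qquad L_{i+1} \le L_i \cdot 2_2^{3 k_i + 1}\tpkt
\]
I would show by induction on $i$ that $k_i \le 2_{2i}^{6k^2 + 2k + a\cdot p}$, using the arithmetic inequalities already collected just before Lemma~\ref{l:maximal-critical-eps-terms-elim} (notably $x \cdot 2^{2^x} \le 2^{2^{x+1}}$ and $(2_{x+2}^y)^2 \le 2_{x+2}^{y+1}$) to absorb both the $a\cdot p$ correction and the double-exponential factor into a single additional tower level per step. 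Multiplying the $L_i$ then yields $L_m \le 2_{2k}^{6k^2 + 2k + a\cdot p}$ as required, since $m \le k$ and each factor $2_2^{3 k_i + 1}$ is dominated once $k_i$ itself is hyperexponential.

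The main obstacle will be calibrating the induction hypothesis so that the constant in the exponent, $6k^2 + 2k + a \cdot p$, is simultaneously large enough to swallow the $a\cdot p$ additive term and the $3 k_i^2$ exponent at stage $i+1$, yet tight enough to match the claimed bound after $m \le k$ iterations. A secondary issue is the careful bookkeeping at stages of rank without any critical formula: there Lemma~\ref{l:ext-first-eps-eq-5} must be argued not to inflate $k_i$ beyond the induction hypothesis, which follows from the last clause of that lemma (the function-symbol substitution strictly decreases $\cc$ while preserving matrix order and the set of critical ranks). Once these estimates are in place, the theorem follows by reading off $n \le L_m$ from the constructed \ECEq-proof.
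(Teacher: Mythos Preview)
Your approach is essentially the paper's: alternate Lemma~\ref{l:ext-first-eps-eq-5} with Lemma~\ref{l:maximal-critical-eps-terms-elim}, carry the recurrences $k_{i+1} \le (k_i + a\cdot p)\cdot 2_2^{3k_i^2}$ and $L_{i+1} \le L_i \cdot 2_2^{3k_i+1}$, and close them by a hyperexponential induction (the paper takes $k_i \le 2_{2i}^{6k^2 + a\cdot p + 2i}$, with the exponent growing in $i$, which is easier to push through than your fixed exponent but lands at the same endpoint). One slip to correct: you write $m \le \rk(\pi) \le k$, but $\rk(\pi)$ is \emph{not} bounded by $\cc(\pi)$ in general; the right count is $m = |\crs(\pi)| \le k$, which your own procedure already delivers since each full transition removes exactly one element from $\crs(\pi)$.
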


\begin{proof}
  We make a sequence of proofs \(\pi_0, \pi_1, \ldots, \pi_m\) where
  \(\pi_0\) is obtained by applying Lemma~\ref{l:ext-first-eps-eq-5}
  to \(\pi\), \(m\) is \(|\crs(\pi)|\), and \(\pi_{i+1}\) is obtained
  by applying Lemma~\ref{l:maximal-critical-eps-terms-elim} and
  Lemma~\ref{l:ext-first-eps-eq-5} in this order to \(\pi_i\).  We
  prove that \(\cc(\rho_{i}) \le 2_{2i}^{6k^2+a \cdot p + 2i}\)
  by induction on \(i\).  It is trivial in case \(i=0\).  In case \(i
  \mapsto i+1\),
  \begin{align*}
    \cc(\rho_{i+1}) & \le (\cc(\rho_{i}) + a \cdot p) \cdot 2_2^{{6 \cdot \cc(\rho_{i})}^2} \\
    & \le (2_{2i}^{6k^2+a \cdot p + 2i} + a \cdot p) \cdot 2_2^{6 \cdot (2_{2i}^{6k^2+a \cdot p + 2i})^2} \\
    & \le 2_{2i}^{6k^2+a \cdot p + 2i} \cdot 2_2^{6 \cdot (2_{2i}^{6k^2+a \cdot p + 2i})^2} + a \cdot p \cdot 2_2^{6 \cdot (2_{2i}^{6k^2+a \cdot p + 2i})^2} \\
    & \le 2_2^{6 \cdot (2_{2i}^{6k^2+a \cdot p + 2i})^2 + 1} + a \cdot p \cdot 2_2^{6 \cdot (2_{2i}^{6k^2+a \cdot p + 2i})^2} \\
    & \le 2_{2(i+1)}^{6k^2+a \cdot p + 2i+ 1} + 2_{2(i+1)}^{6k^2+a \cdot p + 2i+1} \\
    & \le 2_{2(i+1)}^{6k^2+a \cdot p + 2(i+ 1)}.
  \end{align*}
  We calculate the length \(\lth(E, E_{m}) = \lth(E, E_1) \cdot
  \lth(E_1, E_2) \cdot \cdots \cdot \lth(E_{m-1}, E_{m})\).
  \begin{align*}
    \lth(E, E_{m}) & \le 2^{2^{3k+1}} \cdot 2^{2^{3(2_{2}^{6k^2+a \cdot p + 2})+1}} \cdot \cdots \cdot 2^{2^{3(2_{2(m-1)}^{6k^2+a \cdot p + 2(m-1)})+1}} \\
    & < \Big(2^{2^{3\big(2_{2(m-1)}^{6k^2+a \cdot p + 2(m-1)}\big)+1}}\Big)^{m-1} \\
    & \le 2^{(m-1)\cdot 2^{3\big(2_{2(m-1)}^{6k^2+a \cdot p + 2(m-1)}\big)+1}}\\
    & < 2_{2m}^{6k^2+a \cdot p + 3m}
  \end{align*}
  Since \(|\crs(\pi)| \le \cc(\pi) = k\), \(n \le
  2_{2k}^{6k^2+3k+a\cdot p}\).
\end{proof}

\subsection{Alternative \eps-equality formula and closure}
\label{ss:alt}

We study \ECeEqOne, the system \ECe with \EQ and the following
\eps-equality formula,
\begin{align}
  \label{f:eps-equality-1}
  u_i=v \to \eps_x A(x; \vec{u}) = \eps_x A(x; \vec{u}_{i \mapsto v})
\end{align}
where \(\vec{u}_{i \mapsto v}\) is \(u_1, u_2, \ldots, u_{i-1}, v,
u_{i+1}, \ldots, u_n\); obtained by replacing the \(i\)-th element of
\(\vec{u}\) by \(v\).  We say that \(i\) is the \emph{position} of the
\eps-equality formula, and call the above formula the
\emph{\eps-equality formula of position \(i\)}.  Assuming \(\eps_x
A(x, a)\) is an \eps-matrix, Hilbert and Bernays employed the
following \eps-equality formula
\begin{align*}
  u=v \to \eps_x A(x, u) = \eps_x A(x, v),
\end{align*}
and presented the \eps-elimination method~\cite{HilbertBernays39}.
The formula (\ref{f:eps-equality-1}) in \ECeEqOne explicitly expresses
the notion of position.  An upper bound of the Herbrand complexity for
\ECeEqOne can be independent from the maximal arity of critical
\eps-matrices in the given proof, in contrast to the case for \ECeEq.
The following lemma tells us that \ECeEqOne is as strong as \ECeEq.
\begin{lem}
  \(\ECeEq \vdash A\) if and only if \(\ECeEqOne \vdash A\).
\end{lem}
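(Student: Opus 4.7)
The plan is to observe that \ECeEq and \ECeEqOne share all axioms (propositional tautologies, critical formulas, \EQ) and inference rules; they differ only in the form of their \eps-equality axiom. Consequently, it suffices to prove that each system's \eps-equality axiom is derivable in the other, and then to lift this to arbitrary proofs by a straightforward induction on proof length (replacing each use of the foreign \eps-equality axiom by its derivation in the native system).

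For the direction \(\ECeEqOne \vdash A\) implies \(\ECeEq \vdash A\), I would derive the \eps-equality formula of position \(i\),
\begin{equation*}
  u_i = v \to \eps_x A(x; \vec{u}) = \eps_x A(x; \vec{u}_{i \mapsto v}) \tkom
\end{equation*}
inside \ECeEq. The \ECeEq-axiom instantiated with \(\vec{w} := \vec{u}_{i \mapsto v}\) yields
\begin{equation*}
  \vec{u} = \vec{u}_{i \mapsto v} \to \eps_x A(x; \vec{u}) = \eps_x A(x; \vec{u}_{i \mapsto v}) \tpkt
\end{equation*}
The premise is the conjunction of \(u_j = u_j\) for \(j \neq i\) (each an instance of reflexivity from \EQ) together with \(u_i = v\), which is precisely the hypothesis. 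Hence two applications of propositional reasoning and modus ponens deliver the desired formula.

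For the reverse direction \(\ECeEq \vdash A\) implies \(\ECeEqOne \vdash A\), I would derive the \ECeEq-axiom
\begin{equation*}
  \vec{u} = \vec{v} \to \eps_x A(x; \vec{u}) = \eps_x A(x; \vec{v})
\end{equation*}
inside \ECeEqOne by chaining position-wise rewrites. Assuming \(\vec{u} = \vec{v}\), so that each \(u_i = v_i\) is available, I consider the sequence of \eps-terms
\begin{equation*}
  e_0 \defsym \eps_x A(x; u_1, \ldots, u_n),\quad e_k \defsym \eps_x A(x; v_1, \ldots, v_k, u_{k+1}, \ldots, u_n) \text{ for } 1 \le k \le n,
\end{equation*}
so that \(e_n = \eps_x A(x; \vec{v})\). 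For each \(k\), the \eps-equality formula of position \(k{+}1\) in \ECeEqOne gives \(u_{k+1} = v_{k+1} \to e_k = e_{k+1}\), hence from \(\vec{u} = \vec{v}\) we derive each \(e_k = e_{k+1}\). The transitivity axiom of \EQ (applied \(n-1\) times) yields \(e_0 = e_n\), and the deduction theorem produces the desired implication.

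The proof is essentially routine; the only mildly delicate point is bookkeeping the vector notation in the transitivity chain and verifying that each intermediate \eps-term is indeed of the form required by the \ECeEqOne-axiom (i.e.\ differs from its neighbour at exactly one position). Since \ECeEq proofs may use the \eps-equality axiom several times, the simulation is applied once per occurrence, so the simulation incurs only a linear blow-up in proof length; this is not needed for the stated equivalence but is worth recording for the complexity discussion that follows.
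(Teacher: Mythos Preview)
Your proposal is correct and follows essentially the same approach as the paper: both directions are handled by deriving each system's \eps-equality axiom in the other, with the \ECeEq-axiom obtained in \ECeEqOne by chaining single-position rewrites via transitivity (the paper phrases this as an induction on the number of differences between \(\vec{u}\) and \(\vec{v}\), which is the same argument), and the reverse direction by padding with reflexivity (which the paper simply calls ``trivial''). Your write-up is in fact more explicit than the paper's on both directions.
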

\begin{proof}
  Assume \(\pi\) is an \ECeEq-proof of A.  We prove \(\vec{u}=\vec{v}
  \to \eps_x A(x; \vec{u})=\eps_x A(x; \vec{v})\) in \ECeEqOne by
  induction on the number of differences between \(\vec{u}\) and
  \(\vec{v}\).  If there is one difference, say \(u_i\) and \(v_i\),
  the formula is an instance of the alternative \eps-equality formula
  \(u_i = v_i \to \eps_x A(x; \vec{u})=\eps_x A(x; \vec{v})\).  Assume
  \(\pi\) is an \ECeEqOne-proof of the formula \(\vec{u}=\vec{v}
  \to \eps_x A(x; \vec{u})=\eps_x A(x; \vec{v})\) where there are \(n\)
  differences between \(\vec{u}, \vec{v}\) and \(u_k = v_k\).  For any
  \(w\), \(v_k = w \to \eps_x A(x; \vec{v}) = \eps_x A(x; \vec{v}_{k
    \mapsto w})\) is an instance of the alternative \eps-equality
  formula, hence by the transitivity of equality and the deduction
  theorem, \(\vec{u}=\vec{v}_{k \mapsto w} \to \eps_x A(x; \vec{u}) =
  \eps_x A(x; \vec{v}_{k \mapsto w})\).  The other direction is trivial.
\end{proof}

We define the \emph{closure} \(\Closure(\pi,g)\) of the \eps-matrix
\(g\) in the proof \(\pi\), which is to enumerate all the critical
\eps-terms of \(g\) which may occur during the \eps-elimination
process.

\begin{dfn}[Closure]
  Let \(\pi\) be an \ECeEqOne-proof and \(g\) the maximal \eps-matrix
  of \(\pi\).  We define a set \(\mathcal{T}_\pi (g,i)\) of terms
  which occur at the premise of the \eps-equality formulas of the
  position \(i\) in \(\pi\), and a strict order \(\prec_{g,i}\) such
  that for \(u, v \in \mathcal{T}_\pi (g,i)\), \(u \prec_{g,i} v\) iff
  \(\dgr(u) \le \dgr(v) ~\text{and}~ u \not\aeq v\).  We define a
  partially ordered set \(\Closure(\pi,g)\) to be \(\{g(\vec{u}) \mid
  u_i \in \mathcal{T}_\pi (g,i)\}\) with the transitive closure of the
  strict order \(\prec_g\) such that for any position \(i\),
  \(g(\vec{u}) \prec_g g(\vec{u}_{i \mapsto v})\) iff \(u_i
  \prec_{g,i} v\).
\end{dfn}

It is easy to see that a closure is a lattice.

\begin{dfn}[Strict partial order on closures]
  Let \(M, N\) be sublattices of \(\Closure(\pi, g)\).  Define a
  strict partial order as follows: \(M \prec N\) if and only if the
  upper bounds of \(N\) is a proper subset of the upper bounds of
  \(M\).
\end{dfn}

Due to the above strict partial order, we can choose terms among maximal
\eps-terms such that they are also maximal due to \(\prec\).

\begin{dfn}[\(\prec\)-maximal critical \eps-term]
  Assume \(\pi\) is a proof and \(e\) is a critical \eps-term of
  \eps-matrix \(g\) in \(\pi\).  If \(e\) is maximal and \(e' \prec
  e\) holds for all the other critical \eps-term \(e'\) of \(g\) in
  \(\pi\), \(e\) is a \(\prec\)-maximal critical \eps-term n \(\pi\).
\end{dfn}

In the rest of this section, we simply say maximality to mean the
\(\prec\)-maximality.  We introduce an order on proofs.

\begin{dfn}[Strict partial order on proofs due to a closure]
  Let \(\pi\) and \(\pi'\) be proofs.  We define \(\pi \prec \pi'\) if
  and only if either \(\rk(\pi) < \rk(\pi')\) or \(\rk(\pi) =
  \rk(\pi')\) and \(M \prec_g M'\) for all \(g\) of the rank
  \(\rk(\pi)\), where \(M\) and \(M'\) are given by the sets of
  critical \eps-terms of \(g\) in \(\pi\) and \(\pi'\), respectively.
\end{dfn}

Instead of Lemma~\ref{l:aux-1}, we use the following identity lemma in
our current setting.

\begin{lem} \label{l:identity-var}
  Assume \(A(a; \vec{b})\) has exactly one occurrence of each \(b_i\)
  and for each \(b_i\), if it is a subterm of some term \(t\), \(a \in
  \NQVar(t)\) holds.  For any term \(s\) there exists \(\pi\) and
  \(\ECeEqOne \vdash_\pi u_i=v \to A(s; \vec{u}) \to A(s; \vec{u}_{i
    \mapsto v})\), such that \(\cc(\pi) \le \dgr(A(a; \vec{b}))\) and
  \(\rk(\pi) \le \rk(A(a; \vec{b}))\).  Moreover, concerning the
  \eps-equality formulas \(B_1, \ldots, B_n\) used in \(\pi\), if
  \(B_i\) belongs to \(e_i\) and \(e_i'\), \(e_i\) and \(e_i'\) are
  proper subterms of \(e_{i+1}\) and \(e_{i+1}'\), \(e_i\) and
  \(e_i'\) occur just once in \(e_{i+1}\) and \(e_{i+1}'\),
  respectively, \(\dgr(e_{i+1})=\dgr(e_{i}) + 1\) and
  \(\dgr(e_{i+1}')=\dgr(e_{i}') + 1\).
\end{lem}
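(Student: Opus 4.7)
The plan is to proceed by induction on the construction of $A(a;\vec{b})$, exploiting the single-occurrence of $b_i$ to trace a unique root-to-$b_i$ path through $A$ and to apply exactly one instance of the position-based \eps-equality formula~\eqref{f:eps-equality-1} at each \eps-term encountered on this path. At the formula level, connectives propagate the induction trivially via propositional reasoning; at atomic formulas I invoke the \EQ axiom for the predicate symbol or for $=$, reducing the task to a term identity $u_i = v \to t(s;\vec{u}) = t(s;\vec{u}_{i\mapsto v})$ for each argument term $t$ that contains $b_i$.

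For the term identity I would then induct on the structure of $t$. If $t \equiv b_i$ the goal is exactly the assumption. If $t = f(t_1, \ldots, t_k)$, the single-occurrence hypothesis singles out a unique $t_j$ containing $b_i$; I apply the induction hypothesis to $t_j$ and combine with the \EQ axiom for $f$ (the remaining arguments are literally unchanged under the substitution, so their equalities are derivable by reflexivity and \EQ). If $t = \eps_y B(y; t_1, \ldots, t_k)$, again let $t_j$ be the unique subterm carrying $b_i$; the induction hypothesis yields $t_j(s;\vec{u}) = t_j(s;\vec{u}_{i\mapsto v})$, and then the position-based \eps-equality axiom at position $j$ lifts this to the required $\eps_y B(y; t_1(s;\vec{u}),\dots,t_k(s;\vec{u})) = \eps_y B(y; t_1(s;\vec{u}_{i\mapsto v}),\dots,t_k(s;\vec{u}_{i\mapsto v}))$. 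The count bound $\cc(\pi) \le \dgr(A(a;\vec{b}))$ is then immediate: only the \eps-term case contributes an \eps-equality formula, and the number of \eps-terms along the unique root-to-$b_i$ path is bounded by the \eps-nesting depth of $A$, i.e.\ $\dgr(A(a;\vec{b}))$. The rank bound $\rk(\pi) \le \rk(A(a;\vec{b}))$ follows because each introduced \eps-equality formula has the same rank as the \eps-term it belongs to, and every such \eps-term is a subterm of $A(a;\vec{b})$.

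For the ``moreover'' clause, the chain $B_1, \ldots, B_n$ is produced precisely in the order from the innermost to the outermost \eps-term on the path, so that the pair $(e_i,e_i')$ associated with $B_i$ is a proper subterm of $(e_{i+1},e_{i+1}')$ at the unique position $j$ carrying $b_i$; the single-occurrence assumption forces $e_i$ (and $e_i'$) to occur just once in $e_{i+1}$ (resp.\ $e_{i+1}'$). The main obstacle I foresee is justifying the tight degree increment $\dgr(e_{i+1}) = \dgr(e_i) + 1$: this requires that at each \eps-step the $b_i$-carrying argument already realizes the maximum immediate-subterm degree inside $e_{i+1}$. Here I would exploit the standing assumption that every term $t$ containing $b_i$ satisfies $a \in \NQVar(t)$, which forces the $b_i$-branch and the $a$-branch of the \eps-matrix to coincide through exactly the subordinating subterms that govern the rank and degree computation; strengthening the inductive invariant to carry this alignment (together with the matching property on the ``primed'' side obtained by substituting $v$ for $u_i$) is the delicate bookkeeping part of the argument.
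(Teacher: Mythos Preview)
Your approach is essentially the same as the paper's: the paper's entire proof is the one line ``Trivial due to the proof of Lemma~\ref{l:aux-1},'' and your inductive argument along the unique root-to-$b_i$ path, applying one position-based \eps-equality axiom at each \eps-node and the \EQ axioms at function and predicate nodes, is exactly the specialisation of that earlier proof to a single changed parameter. Your bounds for $\cc(\pi)$ and $\rk(\pi)$ are obtained in the same way.

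Regarding the ``moreover'' clause: your worry about the tight increment $\dgr(e_{i+1})=\dgr(e_i)+1$ is reasonable, and the paper does not spell out an argument for it either---it simply inherits whatever can be read off from the construction in Lemma~\ref{l:aux-1}. The properties that are genuinely immediate from the construction (proper-subterm nesting, single occurrence, the chain being indexed inner-to-outer along the path) you have already justified; the exact degree increment would require, as you note, that the $b_i$-carrying argument realises the maximal subterm degree at each step, and this does not obviously follow from the stated hypotheses alone. Since the paper's proof is no more explicit on this point than yours, this is not a defect of your approach relative to the paper's.
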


\begin{proof}
  Trivial due to the proof of Lemma~\ref{l:aux-1}.
\end{proof}

If no \eps-equality formula belongs to the maximal critical \eps-term
to eliminate, we apply Lemma~\ref{l:critical-eps-term-elim-ca}.
Otherwise the following lemma applies.

\begin{lem}[Eliminating a maximal critical \eps-term involving \eps-equality]
  \label{l:eps-eq-elimination-closure}
  Let \(\pi\) be an \ECeEqOne-proof of \(E(\vec{e})\) where
  \(E(\vec{a})\) is an \eps-free formula in \ECeEqOne and \(\vec{e}\)
  is a list of \eps-terms.  Assume the maximal critical \eps-term of
  \(\pi\) is \(\eps_x A(x; \vec{u}) =: e\) and in \(\pi\) there are
  critical formulas including ones of the form \(A(t_l, \vec{u}) \to
  A(\eps_x A(x; \vec{u}), \vec{u})\) for \(1 \le l \le m\) as well as
  \eps-equality formulas as follows.
  \begin{align*}
    & u_{i_1} = v_1 \to \eps_x A(x; \vec{u}) = \eps_x A(x; \vec{u}_{i_1 \mapsto v_1}) \\
    & \vdots \\
    & u_{i_n} = v_n \to \eps_x A(x; \vec{u}) = \eps_x A(x; \vec{u}_{i_n \mapsto v_n})
  \end{align*}
  There is a proof \(\pi_e\) of \(\bigvee_{i \le \width(\pi,e)}
  E(\vec{r}_i)\) for some \(\vec{r}_i\) where there is no critical
  occurrence of \(\eps_x A(x; \vec{u})\) in \(\pi_e\), \(\pi_e \prec
  \pi\), \(\cc(\pi_e) \le 2\cdot (p +1) \cdot \cc(\pi)^2\),
  \(\mwidth(\pi_e, r) \le 2 \cdot \mwidth(\pi, r)^2\), and
  \(|\Closure(\pi_e,g)|\le|\Closure(\pi,g)|\) for any critical
  \eps-matrix \(g\) of rank \(r\) in \(\pi_e\), where \(p :=
  \pdgr(e)\), \(r := \rk(\pi)\) and \(w := \width(\pi, e)\).
\end{lem}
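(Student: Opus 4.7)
The plan is to mimic the proof of Lemma~\ref{l:ext-first-eps-eq-1}, but replace the vector-based substitutions by position-based ones and exploit the identity schema of Lemma~\ref{l:identity-var}. Enumerate the critical formulas $A(t_l;\vec{u}) \to A(\eps_x A(x;\vec{u});\vec{u})$ for $1 \le l \le m$ and the \eps-equality formulas of position $i_j$ indexed by $1 \le j \le n$ as in the statement, so that $m + n = \width(\pi, e)$. Let $g$ denote the \eps-matrix of $e$. The overall strategy is a case split on the disjunction
\[
\bigvee_{j=1}^{n} u_{i_j} = v_j \;\lor\; \bigwedge_{j=1}^{n} u_{i_j} \ne v_j \tkom
\]
which is a propositional tautology.

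First I would construct a proof $\bar\pi$ of $\bigl(\bigwedge_{j=1}^{n} u_{i_j} \ne v_j\bigr) \to \bigvee_{l=0}^{m} E(\vec{s}_l)$, where $\vec{s}_0 := \vec{e}$ and $\vec{s}_l$ (for $1 \le l \le m$) replaces $e$ by $t_l$ in $\vec{e}$. Under the assumption $\bigwedge_j u_{i_j} \ne v_j$, every \eps-equality formula belonging to $e$ is derivable propositionally from ex falso quodlibet, so only the critical formulas belonging to $e$ remain, and Lemma~\ref{l:critical-eps-term-elim-ca} (the Moser--Zach elimination) disposes of them and yields $\bar\pi$ with $\cc(\bar\pi) \le \cc(\pi)\cdot(m+1)$ and $\mwidth(\bar\pi, r) \le \mwidth(\pi,r)\cdot(m+1)$. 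Second, for each $j \in \{1,\dots,n\}$ I would construct a proof $\pi_j$ of $u_{i_j} = v_j \to E(\vec{s}_{m+j})$ where $\vec{s}_{m+j}$ replaces $e$ by $\eps_x A(x;\vec{u}_{i_j \mapsto v_j})$ in $\vec{e}$: assume $u_{i_j} = v_j$, apply the substitution $\{e/\eps_x A(x;\vec{u}_{i_j \mapsto v_j})\}$ throughout $\pi$, and repair the modified premises. The modified critical formulas $A(t_l';\vec{u}) \to A(\eps_x A(x;\vec{u}_{i_j \mapsto v_j});\vec{u})$ (where $t_l'$ is $t_l$ after substitution) are provable by combining a critical formula belonging to $\eps_x A(x;\vec{u}_{i_j \mapsto v_j})$ with the identity formulas supplied by Lemma~\ref{l:identity-var}, which only costs $\dgr(A(a;\vec{b})) \le p$ critical steps per premise. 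The modified \eps-equality formulas of position $i_k$, $k \ne j$, go through analogously, and the surviving equation for $j$ becomes trivial. Combining $\bar\pi$ with $\pi_1, \dots, \pi_n$ by propositional reasoning gives $\pi_e$ concluding $\bigvee_{l=0}^{m+n} E(\vec{s}_l)$.

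For the numerical bounds, each $\pi_j$ has $\cc(\pi_j) \le \cc(\pi) + m\cdot p$ and $\mwidth(\pi_j,r) \le \mwidth(\pi,r) + (m-1)$ after accounting for the new identity instances, so summing over $j$ together with $\bar\pi$ and using $n, m \le \width(\pi,e) \le \mwidth(\pi,r)$ yields $\cc(\pi_e) \le 2(p+1)\cc(\pi)^2$ and $\mwidth(\pi_e, r) \le 2\cdot \mwidth(\pi,r)^2$ after the arithmetic is carried out as in Lemma~\ref{l:critical-eps-term-elim}.

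The main obstacle, and the genuinely new point compared with Lemma~\ref{l:ext-first-eps-eq-1}, will be verifying that $\pi_e \prec \pi$ together with $|\Closure(\pi_e, g')| \le |\Closure(\pi, g')|$ for every critical \eps-matrix $g'$ of rank $r$ in $\pi_e$. The critical occurrences of $e$ in $\pi$ disappear completely in $\pi_e$ by construction. The replacement terms $\eps_x A(x;\vec{u}_{i_j \mapsto v_j})$ introduced in each $\pi_j$ share the same \eps-matrix $g$ as $e$, and by the maximality of $e$ with respect to $\prec_g$ each such term was already strictly below $e$ in $\Closure(\pi, g)$, so no new element of $\Closure(\cdot, g)$ is produced and the top element $e$ is removed, which is precisely what $\pi_e \prec \pi$ asks for at $g$. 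The auxiliary \eps-equality formulas introduced by Lemma~\ref{l:identity-var} belong to \eps-terms of strictly smaller rank than $r$ (by the rank bound in that lemma), hence they affect only closures at lower ranks and leave $\Closure(\cdot, g')$ for $g'$ of rank $r$ unchanged. This last point is what makes the position-based axiom~\eqref{f:eps-equality-1} pay off, since the property degree $p$, rather than the arity $a$, controls the number of auxiliary critical formulas.
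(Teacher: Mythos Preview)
Your overall architecture --- case split on $\bigvee_j u_{i_j}=v_j$, build $\bar\pi$ via Lemma~\ref{l:critical-eps-term-elim-ca}, build $\pi_j$ by substituting $\eps_x A(x;\vec{u}_{i_j\mapsto v_j})$ for $e$ --- matches the paper. But the step you dismiss with ``the modified \eps-equality formulas of position $i_k$, $k\ne j$, go through analogously'' is exactly the heart of the lemma, and your treatment of it is wrong.

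After the substitution in $\pi_j$, the $k$-th \eps-equality formula becomes
\[
u_{i_k}=v_k \;\to\; \eps_x A(x;\vec{u}_{i_j\mapsto v_j}) = \eps_x A(x;\vec{u}_{i_k\mapsto v_k}).
\]
This is \emph{not} an \eps-equality formula in the \ECeEqOne sense (the two sides differ in two positions), and Lemma~\ref{l:identity-var} does not help: that lemma produces identity formulas for \emph{formulas}, not term equalities, and the auxiliary \eps-equality formulas it emits are of rank strictly below $r$. The paper repairs the displayed formula by distinguishing two cases. If $i_j=i_k$ one uses the single \eps-equality formula $v_j=v_k\to\eps_x A(x;\vec{u}_{i_j\mapsto v_j})=\eps_x A(x;\vec{u}_{i_k\mapsto v_k})$. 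If $i_j\ne i_k$ one must pass through the intermediate term $\eps_x A(x;\vec{u}_{i_j\mapsto v_j,\,i_k\mapsto v_k})$ via two \eps-equality formulas, and this intermediate term is in general a \emph{new} critical \eps-term of rank $r$ and matrix $g$ that had no critical occurrence in $\pi$. This is precisely why closures were introduced: the new term lies in $\Closure(\pi,g)$ and sits strictly below $e$ under $\prec_g$, so $\pi_e\prec\pi$ still holds, but your assertion that ``no new element of $\Closure(\cdot,g)$ is produced'' and that the only rank-$r$ terms introduced are the $\eps_x A(x;\vec{u}_{i_j\mapsto v_j})$ is false.

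This gap also invalidates your numerical bookkeeping: the paper obtains $\cc(\rho_j)\le\cc(\pi)+2(p\cdot m+n-1)$ (the extra $2(n-1)$ coming from the two \eps-equality formulas per modified equation) and $\mwidth(\rho_j,r)\le 2\cdot\mwidth(\pi,r)$ (since those new rank-$r$ \eps-equality formulas attach to $\eps_x A(x;\vec{u}_{i_j\mapsto v_j})$), whereas your $\cc(\pi_j)\le\cc(\pi)+m\cdot p$ and $\mwidth(\pi_j,r)\le\mwidth(\pi,r)+(m-1)$ omit the contribution of the repaired \eps-equality formulas entirely.
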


\begin{proof}
  We give proofs \(\rho_j\) of \(u_{i_j}= v_j \to E(\vec{s}\{e/\eps_x
  A(x; \vec{u}_{i_j \mapsto v_j})\})\) for \(1 \le j \le n\) and also a
  proof \(\sigma\) of \((\bigwedge_{1 \le j \le n} \neg u_{i_j}= v_j)
  \to E(\vec{e})\), such that they are all free from the critical
  \eps-term \(e\).  For the former ones, we apply the substitution
  \(\{e/\eps_x A(x; \vec{u}_{i_j \mapsto v_j})\}\) throughout the proof
  \(\pi\).  Then, the critical formulas in \(\pi\) belonging to \(e\)
  goes to \(A(t_l, \vec{u}) \to A(\eps_x A(x; \vec{u}_{i_j \mapsto
    v_j}), \vec{u})\), which is provable by the following three steps
  \begin{align*}
    & A(t_l, \vec{u}) \to A(t_l, \vec{u}_{i_j \mapsto v_j}) \\
    & A(t_l, \vec{u}_{i_j \mapsto v_j}) \to A(\eps_x A(x; \vec{u}_{i_j
      \mapsto v_j}), \vec{u}_{i_j \mapsto v_j}) \\
    & A(\eps_x A(x; \vec{u}_{i_j
      \mapsto v_j}), \vec{u}_{i_j \mapsto v_j}) \to A(\eps_x A(x; \vec{u}_{i_j
      \mapsto v_j}), \vec{u})
  \end{align*}
  where the first and the third formulas by
  Lemma~\ref{l:identity-var}, and the second one is a critical
  formula.  On the other hand, the \(k\)-th \eps-equality formulas in
  \(\pi\) belonging to \(e\) goes to
  \begin{align*}
    u_{i_k} = v_k \to \eps_x A(x; \vec{u}_{i_j \mapsto v_j}) = \eps_x
    A(x; \vec{u}_{i_k \mapsto v_k}),
  \end{align*}
  which is provable as follows.  If the positions of \(j\)-th
  and \(k\)-th \eps-equality formulas are the same,
  namely, \(i_j=i_k\), the assumption \(u_{i_j} = v_j\) and the
  premise \(u_{i_k} = v_k\) implies \(v_j = v_k\), hence the above
  formula is proven by means of an \eps-equality formula \(v_j = v_k
  \to \eps_x A(x; \vec{u}_{i_j \mapsto v_j}) = \eps_x A(x; \vec{u}_{i_k
    \mapsto v_k})\).  Otherwise, we prove the above formula by using
  the two \eps-equality formulas.
  \begin{align*}
    & u_{i_k} = v_k
    \to \eps_x A(x; \vec{u}_{i_j \mapsto v_j}) = \eps_x A(x; \vec{u}_{i_j \mapsto v_j, i_k
      \mapsto v_k}) \\
    & v_j = u_{i_j}
    \to \eps_x A(x; \vec{u}_{i_j \mapsto v_j,  i_k
      \mapsto v_k}) = \eps_x A(x; \vec{u}_{i_k
      \mapsto v_k})
  \end{align*}
  In any case, the critical \eps-term \(e\) is gone.  In the second
  case, although the critical \eps-term \(\eps_x A(x; \vec{u}_{i_j
    \mapsto v_j, i_k \mapsto v_k})\) may be new in the sense that it
  has no critical occurrence in \(\pi\), the obtained proof is
  strictly smaller than the \(\pi\) with respect to \(\prec\) in
  \(\Closure(\pi, \eps_x A(x; \vec{a}))\).  On
  the other hand for the proof \(\sigma\), these \eps-equality
  formulas are provable due to ex falso quodlibet, hence we use
  Lemma~\ref{l:critical-eps-term-elim-ca} to get rid of the critical
  formulas.

  Finally, we discuss the bounds.  After the elimination process, each
  \(\rho_j\) may have an increased number for \(\mwidth(\rho_j, r)\),
  which is bounded by \(2 \cdot \mwidth(\pi, r)\).  Due to
  Lemma~\ref{l:critical-eps-term-elim-ca}, \(\mwidth(\sigma, r) \le
  \mwidth(\pi, r) \cdot (m+1)\).  Therefore, \(\mwidth(\pi_e, r) \le 2
  \cdot \mwidth(\pi, r) \cdot n +\mwidth(\pi, r) \cdot (m+1) \le 2
  \cdot \mwidth(\pi, r)^2\).  The critical counts for \(\vec{\rho},
  \sigma\) are bounded as \(\cc(\rho_j) \le \cc(\pi) + 2 \cdot (p
  \cdot m + n - 1)\) and \(\cc(\sigma) \le (\cc(\pi)-n) \cdot (m+1)\),
  hence \(\cc(\pi_e) \le n\cdot(\cc(\pi) + 2 \cdot (p \cdot m + n -
  1)) + (\cc(\pi)-n) \cdot (m+1) \le (\cc(\pi) + 2\cdot \mwidth(\pi,
  r)\cdot p) \cdot (\mwidth(\pi, r) + 1) \le 2 \cdot (p+1) \cdot
  \cc(\pi)^2\).
\end{proof}

In case we deal with \eps-equality formulas with different positions,
we may introduce a critical \eps-term which is not a critical
\eps-term in the original proof \(\pi\).  The closure covers all the
potential new critical \eps-terms beforehand.  Also note that the
closure won't be expanded through the \eps-elimination procedure,
because the maximal critical \eps-term never matches terms in the
premise of \eps-equality formulas, hence we don't get anything new for
\(\mathcal{T}_\pi (g,i)\).

By Lemma~\ref{l:eps-eq-elimination-closure}, we get a proof without
using a maximal \eps-term in the corresponding closure, hence the size
of the closure is an upper bound for the number of applications of the
lemma needed to eliminate the \eps-matrix.  The size of the closure
\(|\Closure(\pi,g)|\) is bounded by \(\prod_{i \in I_g}
|\mathcal{T}_\pi (g,i)|\), where \(I_g\) is the set of positions of
\eps-equality formulas belonging to \(g\) in \(\pi\).  In the
following proof the upper bound of this size is estimated only by the
number of \eps-equality formulas.

\begin{lem}\label{l:closure-size-bounded}
  For any proof \(\pi\) and \eps-matrix \(g\) in \(\pi\), \(|\Closure(\pi,g)| \le 2^{\widtheq(\pi, g)^2}\).
\end{lem}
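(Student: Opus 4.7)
The plan is to derive the bound directly from the estimate $|\Closure(\pi,g)| \le \prod_{i \in I_g} |\mathcal{T}_\pi(g,i)|$ that is stated in the paragraph preceding the lemma, where $I_g$ denotes the set of positions of \eps-equality formulas belonging to $g$ in $\pi$ and for $i \notin I_g$ the factor $|\mathcal{T}_\pi(g,i)|$ is taken to be $1$. Setting $w := \widtheq(\pi,g)$, the task reduces to showing $\prod_{i \in I_g} |\mathcal{T}_\pi(g,i)| \le 2^{w^2}$.

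The key observation is that each \eps-equality formula of position $i$ belonging to $g$ has a premise of the form $u_i = v$, so it can contribute at most the two terms $u_i, v$ to $\mathcal{T}_\pi(g,i)$. Writing $w_i$ for the number of \eps-equality formulas of position $i$ belonging to $g$, this gives $|\mathcal{T}_\pi(g,i)| \le 2 w_i$, while $\sum_{i \in I_g} w_i = w$. In particular, $|I_g| \le w$, since every position in $I_g$ accounts for at least one formula, and the trivial bound $|\mathcal{T}_\pi(g,i)| \le 2w$ holds for every $i \in I_g$.

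Combining these two estimates yields $\prod_{i \in I_g} |\mathcal{T}_\pi(g,i)| \le (2w)^{|I_g|} \le (2w)^w = 2^{w(1 + \log_2 w)} \le 2^{w^2}$, where the last step uses the elementary inequality $1 + \log_2 w \le w$, valid for $w \ge 1$; the case $w = 0$ is immediate, since then the product is empty and equals $1 = 2^{0^2}$. There is no substantive obstacle: the argument is a direct counting estimate combined with an elementary inequality, and the target bound $2^{w^2}$ is in fact rather generous, reflecting only the shape needed to feed into the subsequent hyperexponential complexity analysis of the Herbrand bound for \ECeEqOne.
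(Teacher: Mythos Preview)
Your argument is correct and follows essentially the same route as the paper: bound $|I_g|\le w$, bound each $|\mathcal{T}_\pi(g,i)|$ linearly in $w$, and conclude via an elementary inequality. The only cosmetic difference is the intermediate estimate: the paper uses $|\mathcal{T}_\pi(g,i)|\le w+1$ to obtain $(w+1)^w\le 2^{w^2}$ from $w+1\le 2^w$, whereas you use $|\mathcal{T}_\pi(g,i)|\le 2w$ and the inequality $1+\log_2 w\le w$; both reach the same target bound.
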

\begin{proof}
The number of different positions of \eps-equality formulas belonging
to \(g\) is at most \(\widtheq(\pi, g)\), and \(|\mathcal{T}_\pi(g,i)|
\le w+1\) holds for each \(i\), hence \(|\Closure(\pi,g)|\) is bounded
by \((\widtheq(\pi, g)+1)^{\widtheq(\pi, g)}\) which is smaller than
\(2^{\widtheq(\pi, g)^2}\).
\end{proof}

Repeatedly eliminating a maximal critical \eps-term,
the rank is diminished.

\begin{lem}[Eliminating critical \eps-terms of the maximal rank]
  \label{l:eps-eq-elimination-rank}
  Let \(\pi\) be an \ECeEqOne-proof of \(E(\vec{e}\,)\) where
  \(E(\vec{a})\) is an \eps-free formula in \ECeEqOne and \(\vec{e}\)
  is a list of \eps-terms.  There is an \ECeEqOne-proof \(\pi'\) of
  \(\bigvee_{i \le N} E(\vec{r}_i)\) for some quantifier free terms
  \(\vec{r}_i\) and \(N\) bounded by \(2_3^{2\cc(\pi)^2+3\cc(\pi)}\) such that
  \(\rk(\pi') < \rk(\pi)\) and \(\cc(\pi') \le 2_3^{(\cc(\pi) + 1)^2 + p}\),
  where \(p\) is \(\pdgr(\pi)\).
\end{lem}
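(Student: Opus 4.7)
The plan is to iteratively apply Lemma~\ref{l:eps-eq-elimination-closure}, or Lemma~\ref{l:critical-eps-term-elim-ca} in the sub-case where only critical formulas belong to the maximal critical \eps-term, so as to eliminate the maximal critical \eps-terms of rank $r := \rk(\pi)$ one at a time. By Lemma~\ref{l:eps-eq-elimination-closure} each such step produces a proof strictly smaller in the order $\prec$ on closures, and it never introduces a critical \eps-term outside the corresponding $\Closure(\pi,g)$. The procedure therefore halts after at most $\sum_g |\Closure(\pi,g)|$ steps, summing over the \eps-matrices $g$ of rank $r$ in $\pi$, and yields an \ECeEqOne-proof of $\bigvee_{i \le N} E(\vec{r}_i)$ of strictly smaller rank.

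First I bound the number $M$ of iterations. By Lemma~\ref{l:closure-size-bounded} every closure of an \eps-matrix of rank $r$ has size at most $2^{\widtheq(\pi,g)^2} \le 2^{\cc(\pi)^2}$, and the number of distinct \eps-matrices of rank $r$ in $\pi$ is at most $\cc(\pi)$, so $M \le \cc(\pi) \cdot 2^{\cc(\pi)^2} \le 2^{(\cc(\pi)+1)^2}$.

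Next I propagate the quantitative bounds through the iterations. Writing $k_i := \cc(\pi_i)$, $w_i := \mwidth(\pi_i, r)$, and $N_i$ for the disjunction length after $i$ steps, with $k_0 = k := \cc(\pi)$, $w_0 \le k$, and $N_0 = 1$, Lemma~\ref{l:eps-eq-elimination-closure} supplies the recurrences $k_{i+1} \le 2(p+1) k_i^2$, $w_{i+1} \le 2 w_i^2$, and $N_{i+1} \le (w_i + 1) N_i$; the alternative case via Lemma~\ref{l:critical-eps-term-elim-ca} is strictly weaker and absorbed. Unrolling yields $k_i \le (2(p+1))^{2^i - 1} k^{2^i}$ and $w_i \le 2^{2^i - 1} k^{2^i}$, and a routine computation using the arithmetic inequalities listed just before Lemma~\ref{l:maximal-critical-eps-terms-elim}, after substituting $i = M \le 2^{(k+1)^2}$, gives $\cc(\pi') = k_M \le 2_3^{(k+1)^2 + p}$ and $N = N_M \le 2_3^{2k^2 + 3k}$, as required.

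The main obstacle is the closure-stability argument: verifying that every critical \eps-term created by a substitution inside Lemma~\ref{l:eps-eq-elimination-closure} lies in the corresponding $\Closure(\pi,g)$, so that each closure is traversed at most once and $M$ is genuinely bounded by $\sum_g |\Closure(\pi,g)|$. This is what the order $\prec$ on proofs delivers and is the essential new ingredient in comparison with the matrix-free elimination of Lemma~\ref{l:maximal-critical-eps-terms-elim}. The remaining bookkeeping follows the pattern of that earlier lemma, raised by one level of hyperexponentiation because $M$ has grown from $O(k)$ to $2^{O(k^2)}$.
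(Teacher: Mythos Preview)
Your proposal is correct and follows essentially the same route as the paper: iterate Lemma~\ref{l:eps-eq-elimination-closure} (falling back on Lemma~\ref{l:critical-eps-term-elim-ca} when no \eps-equality formula is involved), bound the number of iterations by the total closure size via Lemma~\ref{l:closure-size-bounded}, and then solve the identical recurrences $w_{i+1}\le 2w_i^2$ and $k_{i+1}\le 2(p+1)k_i^2$ with $M\le 2^{O(k^2)}$ to obtain the stated hyperexponential bounds. The only cosmetic discrepancy is that the paper counts the iterations as $\cc(\pi)+\sum_g|\Closure(\pi,g)|\le 2^{k(k+1)}$, separating out the critical-formula-only steps explicitly, whereas you fold everything into $\sum_g|\Closure(\pi,g)|\le k\cdot 2^{k^2}$; both are absorbed by $2^{(k+1)^2}$ and lead to the same final estimates.
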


\begin{proof}
  We repeatedly apply Lemma~\ref{l:eps-eq-elimination-closure} in
  order to eliminate maximal critical \eps-terms.  As
  Lemma~\ref{l:eps-eq-elimination-closure} eliminates an \eps-term,
  which is a least upper bound of a closure, without expanding the
  closures of rank \(r\), this process is terminating.  Let \(g_1,
  g_2, \ldots, g_{\matrices(\pi,r)}\) be the critical \eps-matrices of
  rank \(r\) in \(\pi\), then the number of applications of the lemma
  is bounded by the number of critical formulas of rank \(r\) plus the
  number of elements in the closures \(\Closure(\pi, g_i)\), namely,
  by \(\cc(\pi) + \sum_{i \in \matrices(\pi, r)}|\Closure(\pi, g_i)|
  \le 2^{\cc(\pi)\cdot(\cc(\pi)+1)}\).  Let \(k\) be \(\cc(\pi)\).
  Solving the recurrence relations \(w_0 = \mwidth(\pi, r)\) and
  \(w_{n+1} = 2\cdot w_n^2\), we find \(w_n = 2^{(2^n - 1)} \cdot
  (w_0)^{2^n}\), hence \(w_{2^{k(k+1)}} = 2^{(2_2^{k(k+1)} - 1)} \cdot
  (w_0)^{2_2^{k(k+1)}} < 2_3^{2k^2+3k}\) is a bound for the length
  \(N\) of the Herbrand disjunction.  Also solving the recurrence
  relations \(k_0 = k\) and \(k_{n+1} = 2\cdot (p +1)\cdot k_n^2\),
  \(k_n = (2 \cdot (p +1))^{2^n-1} \cdot k^{2^n} \le 2_2^{k+p+n}\),
  hence \(\cc(\pi') \le 2_2^{k+p+2^{k(k+1)}} \le 2_3^{(k + 1)^2 + p}\)
  by excluding a trivial case \(k=0\).
\end{proof}

\begin{thm}[Extended first epsilon theorem for \ECeEqOne]
  Assume \(\ECeEqOne \vdash_\pi E(\vec{s}\,)\) for \(E(\vec{a}) \in
  L(\ECEq)\) and \(\vec{s} \in L(\ECeEq)\), then \(\ECEq \vdash
  \bigvee_{i=0}^{N} E(\vec{s}_i)\) for some \(\vec{s}_{i} \in
  L(\EC)\), where \(N\) is bounded by \(2_{3k}^{k^2 + 3 k + p + 2} \le
  2_{3k+2}^{k + p}\) for \(p := \mpdgr(\pi)\).
\end{thm}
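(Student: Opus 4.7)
The plan is to iterate Lemma~\ref{l:eps-eq-elimination-rank} in the same spirit as the proof of Theorem~\ref{t:extended-first-epsilon-theorem-complexity}, but replacing the rank-reduction step by the sharper closure-based one developed for \ECeEqOne. Concretely, starting from the given proof \(\pi\) of \(E(\vec{s}\,)\), I build a sequence of proofs \(\pi_0, \pi_1, \ldots, \pi_m\) where \(\pi_0 := \pi\) and each \(\pi_{j+1}\) is obtained by applying Lemma~\ref{l:eps-eq-elimination-rank} to \(\pi_j\). Since that lemma guarantees \(\rk(\pi_{j+1}) < \rk(\pi_j)\), the process terminates after at most \(m \le \rk(\pi) \le \cc(\pi) = k\) iterations, and \(\pi_m\) is an \(\ECEq\)-proof. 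Any residual free occurrences of \eps-terms in the concluding disjunction are replaced by fresh free variables, yielding \(\bigvee_{i=0}^N E(\vec{s}_i)\) with \(\vec{s}_i \in L(\EC)\).

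The core of the argument is the numerical bookkeeping. Set \(k_j := \cc(\pi_j)\) and note that \(\mpdgr(\pi_j) \le p\) throughout, as the maximal property degree cannot grow during \eps-elimination (the closure construction and the substitutions only reuse \eps-matrices already present). Lemma~\ref{l:eps-eq-elimination-rank} yields the recurrences
\begin{align*}
k_{j+1} &\le 2_3^{(k_j + 1)^2 + p}, \\
N_{j+1} &\le 2_3^{2 k_j^2 + 3 k_j},
\end{align*}
with \(k_0 = k\) and \(N_0 = 1\). The length of the final Herbrand disjunction is bounded by the product \(N := \prod_{j<m} N_{j+1}\), since each rank-reduction step multiplies the number of disjuncts by at most \(N_{j+1}\).

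Next I would unfold the recurrence for \(k_j\): a routine induction on \(j\) using the arithmetical facts collected before Lemma~\ref{l:maximal-critical-eps-terms-elim} (together with \(p \le k_j\) absorption when convenient) gives \(k_j \le 2_{3j}^{k^2+p+2}\), or a similarly shaped tower. Substituting this into the bound for \(N_{j+1}\) and taking the product over \(j<m \le k\) telescopes to a tower whose height is dominated by \(3k\) and whose top exponent is bounded by \(k^2 + 3k + p + 2\). This yields \(N \le 2_{3k}^{k^2 + 3k + p + 2}\), and absorbing the \(k^2 + 3k + 2\) into two further exponentiations gives the stated \(2_{3k+2}^{k+p}\).

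The main obstacle is precisely this hyperexponential bookkeeping: one has to verify that the closure sizes are absorbed cleanly into the tower (which relies on Lemma~\ref{l:closure-size-bounded} so that they contribute only at the exponent level) and that the crude bounds \(\order(\pi,r) \le 2k\), \(\mwidth(\pi, r) \le k\), and \(\matrices(\pi, r) \le k\) suffice for the product \(\prod_j N_{j+1}\) to collapse into a tower of height linear in \(k\) rather than quadratic. Once the recurrence estimates are in place, the final step is purely arithmetic and essentially identical in shape to the closing calculation in the proof of Theorem~\ref{t:extended-first-epsilon-theorem-complexity}.
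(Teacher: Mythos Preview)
Your overall strategy is the paper's strategy, but there is a genuine gap in the iteration bound. You write ``the process terminates after at most \(m \le \rk(\pi) \le \cc(\pi) = k\) iterations,'' and the second inequality is false in general: a single critical formula can involve a critical \eps-term of arbitrarily large rank, so \(\rk(\pi)\) is not controlled by \(\cc(\pi)\). If you simply iterate Lemma~\ref{l:eps-eq-elimination-rank}, the number of outer steps is bounded only by \(\rk(\pi)\), and your tower height becomes \(3\rk(\pi)\) rather than \(3k\); the stated bound \(2_{3k}^{k^2+3k+p+2}\) then does not follow.

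The paper closes this gap exactly as in the proof of Theorem~\ref{t:extended-first-epsilon-theorem-complexity}: before (and between) the expensive rank-reduction steps it applies the function symbol substitution of Lemma~\ref{l:ext-first-eps-only-eq} to strip off, at no cost to the disjunction length or the critical count, all ranks carrying only \eps-equality formulas. After this preprocessing the maximal rank always lies in \(\crs(\pi)\), so the number of applications of Lemma~\ref{l:eps-eq-elimination-rank} is bounded by \(|\crs(\pi)| \le \cce(\pi) \le k\), not by \(\rk(\pi)\). Once you insert this interleaving, your recurrences \(k_{j+1}\le 2_3^{(k_j+1)^2+p}\) and \(N_{j+1}\le 2_3^{2k_j^2+3k_j}\) are the right ones (they match the bounds of Lemma~\ref{l:eps-eq-elimination-rank}), and the remaining hyperexponential bookkeeping you sketch goes through as in the paper.
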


\begin{proof}
  As long as the maximal rank of critical formulas in \(\pi\) is
  smaller than \(\rk(\pi)\), we repeatedly apply
  Lemma~\ref{l:ext-first-eps-only-eq}.  Our proof proceeds by
  induction on the number of different ranks of critical formulas in
  \(\pi\), namely, size of \(S:=\{\rk(e) \mid e \text{ belongs to a
    critical formula in } \pi\}\).  In each step we use
  Lemma~\ref{l:eps-eq-elimination-closure}, and then
  Lemma~\ref{l:ext-first-eps-only-eq} in the same way as mentioned
  above.  Nullary constants can substitute \eps-terms which still
  remained.  The number of applications of
  Lemma~\ref{l:eps-eq-elimination-rank} is \(|S|\) which is bounded by
  \(\cc(\pi)\).  Solving the recurrence relations \(k_0 = k\) and
  \(k_{n+1} = 2_2^{2_3^{k_n} + 2^p}\), we find \(k_n \le 2_{3n}^{k^2 +
    2k+p+n}\).  The length \(N\) of the Herbrand disjunction is
  bounded by \(2_{3k}^{k^2 + 3 k + p + 2}\), taking \(\cc(\pi)=k\) for
  the bound of \(n\), because \(\prod_{i<n}w_{i} \le (w_{n-1})^n =
  (2_3^{2(k_{n-1})^2 + 3 k_{n-1}})^n \le 2_{3n}^{k^2 + 2 k + p + n +
    2}\).
\end{proof}

The property degree \(p\) is independent from the critical count, and
the use of \(p\) in the upper bound of the length of Herbrand
disjunction is an explanation of potential complexity due to the
presence of the \eps-equality formula.

\section{Lower Bounds on Herbrand Disjunctions}
\label{s:lower-bounds}

In this section, we adapt the result by Statman to give the lower
bounds on Herbrand disjunctions~\cite{Statman79}.  The case without
equality was studied by Moser and Zach, where Orevkov's result was
used to give the lower bound of Herbrand
complexity~\cite{Orevkov82,MoserZach06}.  We first define a
combinatory logic, which allows to define a term whose normal form is
hyperexponentially long.  Then we can describe a sequence of formulas
such as the critical counts of their proofs shows a linear growth in
\PCEq.  Finally, we show that the Herbrand complexity of those
formulas are hyperexponential.

\begin{dfn}
  Let \(\circ\) be a left associative binary function symbol.
  The combinators \(S\), \(B\), \(C\), and \(I\) are defined as
  nullary constants satisfying
  \begin{align*}
    & S \circ x \circ y \circ z = x \circ z \circ (y \circ z), && B \circ x \circ y \circ z = x\circ (y\circ z) \\
    & C \circ x \circ y \circ z = x \circ z \circ y, && I \circ x = x.
  \end{align*}
  Let \lamI denote the set of the above semi-formulas and \(\alllamI\)
  the set of universally quantified closed formulas of \lamI.
\end{dfn}
From now, we omit \(\circ\) and write terms in a common way, e.g.~\(S
x y z\) and etc.
\begin{dfn}
  Define \(T := S B (C B I)\) and \(T_1 := T\), \(T_{n+1} := T_n T\).
\end{dfn}
For a finite set of formulas \(S := \{A_1, A_2, \ldots, A_n\}\), let
\(S \to B\) denote \(A_1 \to A_2 \to \ldots \to A_n \to B\).
\begin{lem}
  \label{l:eq-T}
  There is a \PCEq-proof \(\pi\) such that \(\vdash_\pi \alllamI \to T t u = t (t u)\).
\end{lem}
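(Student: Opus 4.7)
The plan is to carry out the standard combinatory reduction of $Ttu$ to $t(tu)$ inside \PCEq, treating each equation in $\alllamI$ as a premise whose universal quantifiers are stripped by $\allelim$, and chaining the resulting equalities using the symmetry/transitivity/congruence axioms from $\EQ$ together with the identity schema lemma. Once the equality $Ttu = t(tu)$ is derived from the hypothesis $\alllamI$, a single application of the deduction theorem (Theorem~\ref{t:deduction}) produces the desired proof $\pi$ of $\alllamI \to Ttu = t(tu)$.

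Concretely, I would unfold $T := SB(CBI)$ and reduce $Ttu = SB(CBI)tu$ in the following explicit chain, each step being either an instance of a combinator axiom obtained by $\allelim$, or a congruence step justified by the identity schema. First, instantiating the $S$-axiom with $x := B$, $y := CBI$, $z := t$ gives $SB(CBI)t = Bt(CBIt)$, and congruence under the context $(\cdot)u$ yields $Ttu = Bt(CBIt)u$. Next, the $B$-axiom with $x := t$, $y := CBIt$, $z := u$ gives $Bt(CBIt)u = t(CBItu)$, so $Ttu = t(CBItu)$. The subterm $CBItu$ reduces by a subsidiary chain: the $C$-axiom with $x := B$, $y := I$, $z := t$ yields $CBIt = BtI$, hence $CBItu = BtIu$ by congruence; then the $B$-axiom with $x := t$, $y := I$, $z := u$ gives $BtIu = t(Iu)$; and finally the $I$-axiom with $x := u$ gives $Iu = u$, so $t(Iu) = tu$ by congruence. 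Transitivity chains these into $CBItu = tu$, and one more congruence step under $t(\cdot)$ yields $t(CBItu) = t(tu)$, which combined with the earlier chain gives $Ttu = t(tu)$.

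There is no real obstacle here; the only thing to verify is that the substitutions into the universally quantified combinator axioms are admissible (which is immediate, since the terms being substituted are built only from the constants $S, B, C, I$ and the free variables $t, u$, with no bound variables to capture) and that each congruence step is a legal instance of the identity schema lemma. The entire derivation uses a bounded constant number of $\allelim$ steps (roughly five combinator instantiations) plus a bounded number of transitivity and congruence applications, so the critical count of $\pi$ is an absolute constant independent of $t$ and $u$ — a fact that will presumably matter for the subsequent analysis of the formulas built from $T_n$.
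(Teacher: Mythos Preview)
Your proposal is correct and follows essentially the same route as the paper: the paper's proof simply displays the equational chain $(SB)(CBI)tu = (Bt)(CBIt)u = t(CBItu) = t(BtIu) = t(t(Iu)) = t(tu)$ and remarks that the resulting proof has constant critical count (they compute $\cc(\pi)=12$). Your five combinator instantiations and the accompanying congruence/transitivity steps reproduce exactly this chain, and your observation that the critical count is an absolute constant is precisely the point the paper makes for the later linear-growth argument.
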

\begin{proof}
  The proof \(\pi\) involves the equality axioms, the combinatory
  logic axioms, and the quantifier axiom, so that the following
  equations are proven.
  \begin{align*}
    (S B) (C B I) t u & = (B t) (C B I t) u = t (C B I t u) = t (B t I u) = t (t (I u)) = t (t u).
  \end{align*}
  Easy to find a \(\pi\) whose critical count \(\cc(\pi)\) is 12,
  which is constant.
\end{proof}
\begin{dfn}
Let \(p\) and \(q\) be nullary constants.  For any \(n\ge 1\) we
define comprehension terms \(H_{n}\) and a formula \(E_n\).
\begin{align*}
      & H_1 := \{y \mid \forall z. pz = p(yz)\}, && H_{n+1} := \{y
  \mid \forall z \in H_n. y z \in H_n\}, \\
  & E_n := p q = p(T_n q q).
\end{align*}
\end{dfn}
\begin{lem}
  \label{l:lb-0}
  In \(\PCEq\), \(\vdash \alllamI \to z(z y)\vec{t} \in H_{m}\) implies
  \(\vdash \alllamI \to T z y \vec{t} \in H_{m}\) for any terms
  \(\vec{t}\) of arbitrary length and for any \(m\ge 1\).
\end{lem}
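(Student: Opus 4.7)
The plan is to derive the result as a direct application of Lemma~\ref{l:eq-T} together with the identity schema for $\PCEq$. Since Lemma~\ref{l:eq-T} is stated with free variables $t,u$ in the conclusion $\alllamI \to T t u = t(tu)$, we may substitute $t:=z$ and $u:=y$ (via the usual $\forall^+/\forall^-$ detour on fresh eigenvariables, or equivalently by reading the proof schematically) to obtain
\begin{equation*}
  \PCEq \vdash \alllamI \to T z y = z(z y) \tpkt
\end{equation*}

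Next, I would invoke the identity schema of Section~\ref{s:epsilon-calculus} applied to the formula $A(w) \defsym w\vec{t} \in H_{m}$, where $w$ is a fresh free variable occurring in the head position of the $\circ$-chain $w\circ t_0 \circ \cdots \circ t_{k-1}$. Here $w\vec{t} \in H_m$ is read as the genuine $L(\PCEq)$-formula obtained by unfolding the comprehension notation: $w\vec{t}\in H_1$ abbreviates $\forall z\,(p z = p(w\vec{t}\,z))$, and $w\vec{t}\in H_{n+1}$ abbreviates $\forall z\,(z\in H_n \to w\vec{t}\,z \in H_n)$. Since these are all genuine first-order formulas, the identity schema applies and gives
\begin{equation*}
  \PCEq \vdash T z y = z(z y) \to \bigl(z(z y)\vec{t}\in H_m \to T z y\,\vec{t}\in H_m\bigr) \tpkt
\end{equation*}

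Composing this with the previous step and the assumed derivation of $\alllamI \to z(z y)\vec{t}\in H_m$ by propositional chaining of implications yields $\PCEq \vdash \alllamI \to T z y\,\vec{t}\in H_m$ as required.

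There is essentially no obstacle: the lemma is a congruence-of-equality packaging step, and the only mild subtlety is recognising that $w\vec{t}\in H_m$ is indeed a legitimate formula of $L(\PCEq)$ so that the identity schema (proved by induction on formula size) applies uniformly in the length of $\vec{t}$ and in $m$. Note in particular that the critical count incurred by this construction is constant in $\vec{t}$ and $m$ — one use of Lemma~\ref{l:eq-T} plus a bounded propositional and $\EQ$-overhead — which is what makes the lemma useful for the subsequent hyperexponential lower bound argument.
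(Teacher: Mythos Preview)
Your proof is correct but follows a different route from the paper. The paper argues by induction on $m$: the base case ($m=1$) unfolds $H_1$ and appeals to Lemma~\ref{l:eq-T} together with equality reasoning; the step case unfolds $H_{m+1}$ as $\forall x\in H_m.\,(\cdot)x\in H_m$, instantiates with a fresh $x$, and applies the induction hypothesis with the extended vector $\vec{t},x$. You instead invoke the identity schema for the single formula $A(w)\defsym w\vec{t}\in H_m$ as a black box, which is more direct and avoids the explicit induction. The two arguments are morally the same proof: the paper's induction on $m$ is precisely what the standard inductive proof of the identity schema does when specialised to this formula, since $w$ lies under $m$ nested universal quantifiers in the unfolded $H_m$.

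One inaccuracy in your closing remark: the critical count of your construction is \emph{not} constant in $m$. The identity schema for $A(w)=w\vec{t}\in H_m$ is not purely propositional/\EQ reasoning, because $w$ occurs beneath $m$ nested $\forall$'s in the unfolding of $H_m$; the schema's proof therefore needs $m$ uses of $\allelim$ (and $\allintro$), each of which contributes to $\cc$. So the overhead is $O(m)$, not $O(1)$. The paper's inductive construction has the same $O(m)$ overhead for exactly the same reason. This does not affect the correctness of the lemma itself, and for the downstream hyperexponential lower bound a polynomial dependence is all that is needed.
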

\begin{proof}
  By induction on \(m\).  In the base case, we assume that there
  exists a proof of \(z (z y) \vec{t} \in H_1\), that is, \(\forall
  x. p x = p(z (z y) \vec{t} x)\), and prove \(T z y \vec{t} \in
  H_1\), namely, \(\forall x. p x = p(T z y \vec{t} x)\).  We apply
  Lemma~\ref{l:eq-T} once and equality axioms to make a proof.  In the
  step case, we prove that if \(\vdash z (z y) \vec{t} \in H_{m+1}\)
  then \(\vdash T z y \vec{t} \in H_{m+1}\) for any \(\vec{t}\) of any
  length.  Assume \(\vec{t}\) and \(\vdash z (z y) \vec{t} \in
  H_{m+1}\), and we prove \(\vdash T z y \vec{t} \in H_{m+1}\), which
  unfolds into \(\forall x \in H_m. T z y \vec{t} x \in H_m\).
  Further assume \(x \in H_m\), then the induction hypothesis,
  \(\vdash z (z y) \vec{t} \in H_{m}\) implies \(\vdash T z y \vec{t}
  \in H_{m}\) for any \(\vec{t}\) of any length, is applicable with
  terms \(\vec{t},x\), hence it suffices to show \(\vdash z (z y)
  \vec{t} x \in H_m\) which is trivial due to the assumptions \(\vdash
  z (z y) \vec{t} \in H_{m+1}\), which unfolds into \(\vdash \forall x
  \in H_m. z (z y) \vec{t} x \in H_{m}\), and \(x \in H_m\).
\end{proof}

\begin{lem}
  \label{l:lb-1}
  In \(\PCEq\), \(\vdash \alllamI \to T \in H_{m+1}\) for any \(m \ge 1\).
\end{lem}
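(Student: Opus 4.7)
The argument proceeds by cases on $m$: Lemma~\ref{l:eq-T} handles the base level $H_1$, and Lemma~\ref{l:lb-0} handles the higher levels. In both cases the strategy is to unfold the set-membership $T \in H_{m+1}$ (which is shorthand for $\forall z\in H_m.\ Tz \in H_m$) and then close the resulting goal.

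For $m = 1$, I would assume $z \in H_1$, i.e.\ $\forall y.\ py = p(zy)$, fix an arbitrary $x$, and aim at $px = p(Tzx)$. Lemma~\ref{l:eq-T}, specialised to $t := z$ and $u := x$, together with the equality axioms \EQ, gives $p(Tzx) = p(z(zx))$. Two instantiations of the hypothesis $z \in H_1$, with $y := x$ and then $y := zx$, yield $px = p(zx)$ and $p(zx) = p(z(zx))$, and transitivity of $=$ closes $px = p(Tzx)$.

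For $m \ge 2$, I would further unfold $Tz \in H_m$ to $\forall w \in H_{m-1}.\ Tzw \in H_{m-1}$, assume $z \in H_m$ and $w \in H_{m-1}$, and invoke Lemma~\ref{l:lb-0} with parameter $y := w$ and empty $\vec{t}$ to reduce the goal to $z(zw) \in H_{m-1}$. Since $z \in H_m$ unfolds to $\forall u \in H_{m-1}.\ zu \in H_{m-1}$, two applications of this---first at $u := w$ to obtain $zw \in H_{m-1}$, then at $u := zw$ to obtain $z(zw) \in H_{m-1}$---discharge the remaining goal.

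No substantial obstacle is anticipated; the proof is little more than a sequence of universal instantiations composed with the two preceding lemmas. The main thing to watch is the matching of the $m$-indices and the fact that the base case cannot be subsumed under Lemma~\ref{l:lb-0} alone, because at level $H_1$ the goal is an equation rather than a membership in a higher $H$, and this equational content has to be discharged directly via Lemma~\ref{l:eq-T} and \EQ. A further pleasant feature is that the critical count remains constant in $m$: only the fixed bound from Lemma~\ref{l:eq-T} appears in the base case, while the step case contributes a constant overhead on top of whatever Lemma~\ref{l:lb-0} already supplies.
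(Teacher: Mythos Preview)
Your proposal is correct and follows essentially the same route as the paper: a case split on $m=1$ versus $m\ge 2$, with the base case handled directly via Lemma~\ref{l:eq-T} and equality reasoning, and the inductive levels handled by unfolding $T\in H_{m+1}$ twice, deriving $z(zw)\in H_{m-1}$ from $z\in H_m$ and $w\in H_{m-1}$, and then invoking Lemma~\ref{l:lb-0} with empty $\vec{t}$. The paper merely calls the $m=1$ case ``trivial'' where you spell it out, and it reindexes the step case as $T\in H_{m+2}$, but the content is identical.
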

\begin{proof}
  In case \(m=1\), \(T \in H_2\) holds trivially.
  Assume \(m>1\).  Unfolding \(T \in H_{m+2}\).
  \begin{align*}
    T \in H_{m+2} & \Longleftrightarrow \forall z \in H_{m+1}. T z \in H_{m+1} \\
    & \Longleftrightarrow \forall z. (\forall y \in H_m. z y \in H_{m}) \to \forall y \in H_m. T z y \in H_{m}.
  \end{align*}
  Due to its premise, \(z(z y) \in H_m\) follows.  Lemma~\ref{l:lb-0}
  implies \(T z y \in H_m\).
\end{proof}

\begin{lem}
  \label{l:lb-2}
  There is a \PCEq-proof \(\pi\) such that \(\vdash_\pi \alllamI \to
  T_{n} \in H_{2}\) for any \(n \ge 1\), such that \(\cc(\pi)\) is
  linear in \(n\).
\end{lem}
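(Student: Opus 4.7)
The plan is to proceed by induction on $n$. The base case $n=1$ is immediate from Lemma~\ref{l:lb-1} with $m=1$, which supplies a derivation of $T_1 = T \in H_2$ of constant critical count. For the inductive step, a direct induction on $n$ does not close: to conclude $T_{n+1} = T_n T \in H_2$ one would need $T_n \in H_3$ (so that $T_n$ maps $H_2$ into $H_2$) together with $T \in H_2$, rather than merely $T_n \in H_2$.

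Instead, for each target $n$ I will construct an auxiliary descending chain $T_k \in H_{n-k+2}$ for $k=1, 2, \ldots, n$, whose final link is precisely $T_n \in H_2$. The chain is seeded by $T_1 = T \in H_{n+1}$, an instance of Lemma~\ref{l:lb-1} with $m=n$. Each link $T_k \in H_{n-k+2} \Longrightarrow T_{k+1} \in H_{n-k+1}$ is obtained by unfolding the premise to the equivalent $\forall y.\, y \in H_{n-k+1} \to T_k y \in H_{n-k+1}$, instantiating the universal quantifier at $y \defsym T$ (one $\forall^-$, plus a modus ponens that is not counted), and discharging the remaining side condition $T \in H_{n-k+1}$ by a further application of Lemma~\ref{l:lb-1}. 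The resulting formula $T_k T \in H_{n-k+1}$ is by definition $T_{k+1} \in H_{n-k+1}$, and after $n-1$ links we obtain $T_n \in H_2$ as required.

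The principal obstacle is to keep the critical count linear in~$n$. Lemma~\ref{l:lb-1} at level $m+1$ has critical count linear in $m$ — this cost is inherited from the $m-1$ nested unfoldings of the $H$-hierarchy performed inside Lemma~\ref{l:lb-0} — so a naive summation over the $n$ invocations at levels $n+1, n, \ldots, 2$ would give only a quadratic bound. The linear bound is secured by consolidating the $n$ invocations of Lemma~\ref{l:lb-1} into a single telescoped sub-derivation: the proofs of $T \in H_{j+1}$ for successive $j$ share the same descending pattern of $\forall$-eliminations through the $H$-hierarchy, so a single chain of nested unfoldings of depth $O(n)$ suffices to discharge every intermediate side condition $T \in H_{n-k+1}$ arising along the outer chain. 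Each outer link then contributes only a constant number of further $\forall^-$ inferences (the one instantiation at $y \defsym T$ together with constantly many propositional combinations), so the cumulative critical count is $O(n)$, as claimed.
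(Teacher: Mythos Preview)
Your descending-chain construction $T_1 \in H_{n+1} \Rightarrow T_2 \in H_n \Rightarrow \cdots \Rightarrow T_n \in H_2$ is exactly the paper's argument, stated there as an induction on $n$ with the strengthened hypothesis ``for all $m \ge 1$, $T_n \in H_{m+1}$'': unwinding that induction to reach $T_n \in H_2$ produces precisely your chain, with each link using the induction hypothesis (your $T_k \in H_{n-k+2}$) together with one instance of Lemma~\ref{l:lb-1}. So the overall strategy matches the paper's.

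The gap is in your telescoping step. You assert that the $n$ separate proofs of $T \in H_{j+1}$ (for $j = 1,\dots,n$) can be consolidated into a single $O(n)$ sub-derivation because they ``share the same descending pattern of $\forall$-eliminations'', but this is not substantiated and does not obviously work. Each instance of Lemma~\ref{l:lb-1} at level $j+1$ invokes Lemma~\ref{l:lb-0} at level $j-1$ with its own fresh eigenvariables $z,y$ (introduced by the $\allintro$ rules building $T\in H_{j+1}$), and the inner induction inside Lemma~\ref{l:lb-0} appends further fresh eigenvariables at each level. The resulting sub-derivations prove genuinely different formulas with disjoint eigenvariable contexts; there is no common sub-proof to reuse. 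Nor is there a cheap implication $T\in H_{j+1}\to T\in H_j$ that would let you cascade down from a single top-level instance. As written, then, the telescoping claim does not go through, and your argument delivers only the quadratic bound you yourself flag.

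For context, the paper's own proof does not address this point either: it simply asserts linearity without analysing the accumulated cost of the repeated calls to Lemma~\ref{l:lb-1}. For the intended application --- exhibiting a non-elementary gap between $\cc(\pi_n)$ and Herbrand complexity --- a quadratic bound would serve just as well.
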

\begin{proof}
  We prove that in \(\PCEq\), \(\vdash \alllamI \to T_{n} \in H_{m+1}\) for any
  \(n, m \ge 1\) by induction on \(n\).  The base case, \(T_1 \in
  H_{m+1}\) for any \(m\), is proven by Lemma~\ref{l:lb-1}.  In order
  to prove the step case, \(T_{n+1} \in H_{m+1}\) for any \(m\), it
  suffices to assume \(m\) and to prove both \(T_n \in H_{m+2}\) and
  \(T \in H_{m+1}\).  They are straightforward by induction hypothesis
  and by Lemma~\ref{l:lb-1}, respectively.
\end{proof}
Assuming \alllamI and \(\forall x. p x = p(q x)\), there is a linear
proof of \(E_n\) in \PCEq.
\begin{thm}
  \label{t:linear-proof}
  There is a \PCEq-proof \(\pi_n\) such that \(\vdash_{\pi_n}
  \alllamI \to (\forall x. p x = p(q x)) \to E_n\) and \(\cc(\pi_n)\)
  is linear in \(n\).
\end{thm}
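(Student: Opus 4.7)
The plan is to chain together the results already established about $H_1$ and $H_2$ with two quantifier instantiations. By Lemma~\ref{l:lb-2} there is a \PCEq-proof $\sigma_n$ of $\alllamI \to T_n \in H_2$ whose critical count is linear in $n$; this is the only ingredient that contributes growth, and the rest of the argument will only add a constant overhead.

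First I would unfold the definition $T_n \in H_2$ as $\forall y. (y \in H_1 \to T_n y \in H_1)$. Next observe that the extra hypothesis $\forall x. p x = p(q x)$ is \emph{literally} the unfolding of $q \in H_1$, so it is available to us under the assumptions of the theorem. Instantiating the universal formula $T_n \in H_2$ at $y := q$ via one $(\allelim)$ and applying modus ponens with $q \in H_1$ yields $T_n q \in H_1$. Unfolding once more, this is $\forall z.\, p z = p(T_n q z)$, and a second $(\allelim)$ at $z := q$ gives exactly $p q = p(T_n q q)$, which is $E_n$.

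Assembling the pieces, the proof $\pi_n$ consists of $\sigma_n$, followed by the two quantifier instantiations, two applications of modus ponens, and a handful of propositional steps that rearrange the antecedents into the form $\alllamI \to (\forall x. p x = p(q x)) \to E_n$; the deduction theorem (Theorem~\ref{t:deduction}) may be used to manage the assumption $\forall x. p x = p(q x)$. The overhead beyond $\sigma_n$ is bounded by a constant independent of $n$, so $\cc(\pi_n) = \cc(\sigma_n) + O(1)$ remains linear in $n$.

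The only subtle point I would double-check is bookkeeping: the critical count in Definition~\ref{d:critical-count-2} counts critical formulas, \eps-equality formulas, and instances of $(\allelim)$ and $(\exintro)$, so the two $(\allelim)$ steps above contribute to $\cc$, but only by an additive constant; similarly, the unfolding of membership in $H_1$ and $H_2$ is purely definitional and does not introduce quantifier axioms beyond the two already counted. No new obstacle arises, and linearity is preserved.
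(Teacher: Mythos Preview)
Your proposal is correct and follows essentially the same route as the paper: invoke Lemma~\ref{l:lb-2} for a linear-$\cc$ proof of $T_n\in H_2$, unfold $H_2$ and instantiate at $y:=q$ using the hypothesis $\forall x.\,p x = p(q x)$ (which is $q\in H_1$), then unfold $H_1$ and instantiate at $z:=q$ to obtain $E_n$, with only constant additional overhead. The paper's proof is terser but does exactly this.
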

\begin{proof}
  Assume \alllamI and \(\forall x. p x = p(q x)\). By
  Lemma~\ref{l:lb-2}, there is a proof \(\pi\) of \(T_n \in H_2\) with
  \(\cc(\pi)\) being linear in \(n\).  The formula unfolds into
  \(\forall y. (\forall z. p z = p (y z)) \to \forall z. p z = p(T_n y
  z)\),
  hence \(p q = p(T_n q q)\).
\end{proof}

For a set \(X\) of semi formulas, let \(X^*\) denote a set of closed
formulas obtained by instantiating each formula in \(X\).  We use the
following lemma, whose proof is found in Statman~\cite{Statman79}, to
prove the main theorem.

\begin{lem}[Statman~\cite{Statman79}]
  Suppose that \(X\) is a finite subset of \(\{p x = p(q x)\}^*\) such
  that \(\vdash \lamIstar \to X \to E_n\); then there is a finite
  subset \(Y\) of \(\{p x = p(q x)\}^*\) such that \(\vdash \lamIstar
  \to X \to E_n\), \(|Y| \le |X|\), and each term occurring in \(Y\)
  is closed and in normal form.
\end{lem}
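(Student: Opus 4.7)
The plan is to normalise each term in $X$ and take $Y$ to consist of the resulting instances. Every element of $X$ has the form $pt = p(qt)$ for a closed combinator term $t$, and the closed instances of $Sxyz = xz(yz)$, $Bxyz = x(yz)$, $Cxyz = xzy$, and $Ix = x$ that constitute $\lamIstar$ can be read left-to-right as a rewriting relation. Whenever $t$ reduces to a closed normal form $t^{\circ}$, we have $\lamIstar \vdash t = t^{\circ}$ by transitivity and congruence of $=$. I would associate such a $t^{\circ}$ to each closed $t$ and set
\[
Y \;:=\; \{\, p t^{\circ} = p(q t^{\circ}) \mid (pt = p(qt)) \in X \,\}.
\]
The cardinality bound $|Y| \le |X|$ is immediate, since the map $t \mapsto t^{\circ}$ need not be injective and collapses of distinct $t$'s to a common normal form only shrink the set.

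To promote the hypothesis $\vdash \lamIstar \to X \to E_n$ to $\vdash \lamIstar \to Y \to E_n$, I would derive each $pt = p(qt)$ in $X$ from its $Y$-counterpart $pt^{\circ} = p(qt^{\circ})$ using $\lamIstar \vdash t = t^{\circ}$ together with the congruence axioms in $\EQ$ (so as to deduce $pt = pt^{\circ}$ and $p(qt) = p(qt^{\circ})$, then transitivity). Combining this with the hypothesis, by the deduction theorem and routine propositional reasoning, yields the desired implication.

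The chief obstacle is that the $SBCI$-combinator system is not strongly normalising: closed counterparts of $\Omega$ exist and admit no normal form at all, so it is not automatic that every $t$ occurring in $X$ admits a $t^{\circ}$ in the sense above. Following Statman, one handles such recalcitrant $t$ by substituting a fixed closed normal form — the constant $I$ is convenient, since $pI = p(qI)$ is itself a legitimate element of $\{px = p(qx)\}^{*}$ — and then argues that the resulting, possibly weaker, set $Y$ still suffices to derive $E_n$. Verifying that these non-normalising instances can be harmlessly replaced, so that the original derivation can be adapted, is the principal technical burden of the proof; this is where the bookkeeping of Statman's argument concentrates.
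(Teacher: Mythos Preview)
The paper does not prove this lemma: it merely attributes the result to Statman with the remark ``whose proof is found in Statman~\cite{Statman79}''. There is thus no in-paper argument to compare against; your sketch already goes further than the paper itself does.

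That said, your proposal has a genuine gap exactly where you flag it. For terms $t$ possessing a normal form, replacing $t$ by $t^{\circ}$ and using $\lamIstar \vdash t = t^{\circ}$ together with the congruence and transitivity axioms from $\EQ$ is correct and routine, and the bound $|Y|\le|X|$ follows as you say. The substantive issue is the non-normalising case: the $SBCI$ system is indeed not normalising (e.g.\ $SII(SII)$ reduces to itself), so such $t$ may well occur in $X$. Your suggestion to replace these by $I$ is not in itself an argument: one must show that the derivation $\vdash \lamIstar \to X \to E_n$ survives when an instance $pt = p(qt)$ with non-normalising $t$ is swapped for $pI = p(qI)$ (or simply dropped), and nothing in your sketch establishes this. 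You explicitly defer the point to ``the bookkeeping of Statman's argument'', which means you have located the crux but not discharged it. Whether it really requires heavy bookkeeping, or whether a cleaner semantic argument suffices---for instance, observing that by Church--Rosser a non-normalising $t$ is $\lamIstar$-unequal to every normal-form term, so that the atom $pt = p(qt)$ is independent of the normal-form atoms appearing in $E_n$ and can be consistently discarded in a countermodel---is precisely what consulting \cite{Statman79} would resolve.
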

\begin{thm}[Statman~\cite{Statman79}]
  \label{t:Statman}
  Suppose \(X\) is a finite subset of \(\{p x = p(q x)\}^*\) such that
  \(\vdash \lamIstar \to X \to E_n\) and each term occurring in \(X\) is
  in normal form; then \(|X| \ge 2_n^1 / 2\).
\end{thm}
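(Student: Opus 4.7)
The plan is to adapt Statman's original argument from~\cite{Statman79}, exploiting two essential facts: the normal form of $T_n q q$ in $\lamI$ is hyperexponentially large, while each instance of an axiom of the form $p t = p(q t)$ contributes only a single local rewrite at a $p$-position. Since we are given a proof $\vdash \lamIstar \to X \to E_n$ in which every term in $X$ is closed and in normal form, we may reduce the analysis to the purely equational question of how a chain in the rewrite system generated by $\lamIstar$ together with the oriented equations $p t_i \leftrightarrow p(q t_i)$ can connect $p q$ to $p(T_n q q)$.

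First I would pin down the normal form of $T_n q q$. Using $T u v = u(u v)$ from Lemma~\ref{l:eq-T} together with $T_{n+1} = T_n T$, a straightforward induction on $n$ shows that the normal form of $T_n q q$ has size (measured either by tree depth or by the number of occurrences of $q$) at least $2_n^1$. This is the ``squaring'' behaviour of $T$ in its first argument and is precisely the reason the $T_n$ were chosen this way.

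Second I would analyse the equational proof. Because the terms of $X$ are closed normal forms and $\lamIstar$ has unique normal forms, we may convert the derivation into a sequence of normal-form terms $q \equiv M_0, M_1, \dots, M_k \equiv \mathrm{nf}(T_n q q)$ in which each transition $M_j \to M_{j+1}$ is the application, at a single occurrence, of some axiom $p t_i = p(q t_i)$ that either inserts or removes a $q$ sitting immediately above a subterm congruent to $t_i$. Travelling from $q$ (depth $1$) to $\mathrm{nf}(T_n q q)$ (depth $\ge 2_n^1$) then forces $k \ge 2_n^1$; the bound on $|X|$ will come from showing that each such transition is witnessed by an essentially distinct $t_i$, up to a factor of $2$ accounting for the fact that a given axiom may be used once to insert and once to remove the same subterm.

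The main obstacle is making the ``distinct $t_i$ per rewrite step'' count rigorous, since a priori nothing prevents the same axiom from being invoked at many different positions of the tree. The resolution, following Statman, is that the closed normal forms $t_i$ are uniquely determined by the rewrite site: along the normal-form chain above, the subterms immediately below the rewritten $q$ are structurally distinguishable at successive depths, so distinct depths in the hyperexponentially deep normal form of $T_n q q$ force distinct axioms. Combined with the step-one change in size at each transition, this yields the desired bound $|X| \ge 2_n^1 / 2$.
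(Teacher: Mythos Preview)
Your approach is genuinely different from the paper's, and the gap you yourself flag as the ``main obstacle'' is not closed by what you write.

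The paper (following Statman) does \emph{not} argue syntactically by counting rewrite steps along a normal-form chain. Instead it argues semantically: assuming $|X|=m<2_n^1/2$, the $2m$ terms $M_i, qM_i$ cannot exhaust $\{q^k : 1<k\le 2_n^1+1\}$, so some $q^k$ is missed. One then extends $\lamI$ by new constants $0,1$ and rules $pM\rhd 0$ if $M=q^j$ with $j<k$, $pM\rhd 1$ otherwise. In this extension every axiom $pM_i=p(qM_i)$ holds (both sides reduce to the same constant, precisely because $q^k$ is avoided), while $E_n$ reduces to $0=1$, which fails by Church--Rosser. This yields the contradiction cleanly, and the factor $1/2$ drops out of the pigeonhole on the $2m$ terms.

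Your syntactic route would need to show that any equational chain from $pq$ to $p(q^{2_n^1+1})$ using $\lamIstar\cup X$ must, after normalisation, pass through \emph{every} $p(q^j)$ and that the $X$-step witnessing the passage from $p(q^j)$ to $p(q^{j+1})$ forces $t_i=q^j$. But equational logic lets an $X$-axiom $pt_i=p(qt_i)$ fire at \emph{any} $p$-headed subterm of any context, and $\lamIstar$ contains $S$, whose reverse application can duplicate subterms and hence create several $p$-occurrences from one. So a single $X$-step need not correspond, after normalisation, to a single top-level insertion or deletion of one $q$; your sentence ``the subterms immediately below the rewritten $q$ are structurally distinguishable at successive depths'' is exactly the statement that needs proof, not a proof of it. You also do not treat the case where some $t_i$ itself contains $p$ in function position, which the paper's argument handles separately. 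Finally, the attribution ``following Statman'' for the rewriting resolution is off: Statman's own argument is the model-theoretic one the paper reproduces.

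If you want to salvage the syntactic idea, the natural way to make it rigorous is to define an invariant separating $pq$ from $p(q^{2_n^1+1})$ that is preserved by $\lamIstar$ and by every $X$-axiom in every context --- but writing that down carefully is precisely constructing the two-valued model above.
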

\begin{proof}
  Suppose to the contrary that the size of \(X\) is less than \(2_n^1
  / 2\).  Assume \(X = \{p M_i = p(q M_i) \mid 1 \le i \le m\}\) for
  some \(m < 2_n^1 / 2\) and \(M_i\).  Because the number of possible
  different instantiations is at most \(m\), for some \(k\) with \(1 <
  k \le 2_n^1 +1\), for any \(i\), \(1 \le i \le m\), neither \(M_i\)
  nor \(q M_i\) is same as \(q^k\).  Let \(0, 1\) be new constants.
  We define \alllamIplus to be \alllamI extended with infinitely many
  equations, so that the following reduction rules for closed \(M\) of
  normal form without \(p\) in function position are available:
  \begin{align*}
    & p M \rhd 0 \qquad \text{if \(M = q^j\) for \(j < k\),} \\
    & p M \rhd 1 \qquad \text{otherwise.}
  \end{align*}
  We prove that \(\not \vdash \alllamIplus \to E_n\).  The normal form
  of formula \(p q = p q^{2_n^1 + 1}\) is \(0 = 1\), which is not
  provable in \alllamIplus because of the Church-Rosser property,
  cf.~Theorem 3 in~\cite{Hindley74}.  Notice that \(p
  q^{2_n^1 + 1}\) is the normal form of \(p (T_n q q)\).

  Finally, we prove that \(\vdash \alllamIplus \to p M_i = p (q M_i)\)
  for each \(i\).  If \(M_i\) does not contain \(p\) in function
  position, we consider the following three cases.  If \(M_i\) is not
  of the form \(q^j\) for any \(j\), \(p M_i, p (q M_i) \rhd 1\), and
  if \(M_i\) is \(q^j\) and \(j, j+1 < k\), \(p M_i, p (q M_i) \rhd
  0\), otherwise \(k< j, j+1\) and then \(p M_i, p (q M_i) \rhd 1\).
  If \(M_i\) contains \(p\) in function position, \(M_i\) has a normal
  form containing \(0\) or \(1\) and without \(p\) in function
  position, so \(p M_i, p (q M_i) \rhd 1\) under \alllamIplus.

  As \alllamIplus extends \lamIstar, \(\not
  \vdash \alllamIplus \to X \to E_n\) implies \(\not
  \vdash \lamIstar \to X \to E_n\).
\end{proof}
This theorem implies the following corollary which tells us that the
length of Herbrand disjunction of the formula in
Theorem~\ref{t:linear-proof} is hyperexponential.  For a set of semi
formulas \(\{A_0(\vec{x}_0), \ldots, A_n(\vec{x}_n)\} =: X\), let
\(X(\vec{t}_0, \ldots, \vec{t}_n\,)\) denote a conjunction
\(A_0(\vec{t}_0) \land \ldots \land A_n(\vec{t}_n)\) .
\begin{cor}
  \(\HC(\alllamI \to (\forall x. p x = p(q x)) \to E_n) \ge 2_n^1/2\).
\end{cor}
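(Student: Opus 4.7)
The plan is to reduce the corollary to Theorem~\ref{t:Statman} via Herbrand's theorem. First, Theorem~\ref{t:linear-proof} already provides a $\PCEq$-proof of $F_n := \alllamI \to (\forall x.\ p x = p(q x)) \to E_n$. Pulling each universal quantifier on the antecedent side outward by means of the classical equivalence $\forall x\, A \to B \equiv \exists x\, (A \to B)$ turns $F_n$ into an existential prenex $\exists \vec{z}\ G(\vec{z})$ whose matrix has the shape $G(\vec{z}) = H(\vec{z}) \to E_n$, where $H(\vec{z})$ is a conjunction of the $\lamI$-semiformulas together with one instance of $p y = p(q y)$, all parameterised by components of $\vec{z}$.

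Let $k := \HC(F_n)$. Applying Theorem~\ref{t:Herbrand} to the prenex form delivers tuples $\vec{z}_1, \ldots, \vec{z}_k$ such that $\ECEq \vdash \bigvee_{i=1}^{k} G(\vec{z}_i)$. Since $E_n$ occurs identically in every disjunct, the propositional tautology
\begin{equation*}
\bigvee_{i=1}^k (H(\vec{z}_i) \to E_n) \;\leftrightarrow\; \Bigl(\bigwedge_{i=1}^k H(\vec{z}_i)\Bigr) \to E_n
\end{equation*}
converts the Herbrand disjunction into $(\bigwedge_{i} H(\vec{z}_i)) \to E_n$, and regrouping the conjuncts yields $\vdash \lamIstar \to X \to E_n$. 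Here $\lamIstar$ collects the finitely many $\lamI$-instances occurring in the $H(\vec{z}_i)$, and $X := \{p s_i = p(q s_i) : 1 \le i \le k\} \subseteq \{p x = p(q x)\}^*$ gathers the corresponding instances of the equality premise; crucially $|X| \le k$.

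Next, the normalization lemma stated immediately before Theorem~\ref{t:Statman} furnishes a subset $Y \subseteq \{p x = p(q x)\}^*$ with $|Y| \le |X|$, each of whose equations involves only closed normal-form terms, and such that $\vdash \lamIstar \to Y \to E_n$ remains provable. Theorem~\ref{t:Statman} then yields $|Y| \ge 2_n^1/2$, and chaining the inequalities gives $\HC(F_n) = k \ge |X| \ge |Y| \ge 2_n^1/2$. The only subtle step is the passage from the Herbrand disjunction to the canonical form $\lamIstar \to X \to E_n$ used by Statman's machinery: the propositional rewriting relies on $E_n$ being common to all disjuncts, and the bound $|X| \le k$ relies on reading $X$ directly off the equality-premise instantiations in the Herbrand tuples. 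Both are routine once the matrix structure of $G$ is fixed, and the remaining steps are direct appeals to Theorem~\ref{t:linear-proof}, Theorem~\ref{t:Herbrand}, the normalization lemma, and Theorem~\ref{t:Statman}.
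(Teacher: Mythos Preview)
Your argument is correct and mirrors the paper's proof: take a shortest Herbrand disjunction for the prenexed formula, rewrite $\bigvee_i(H(\vec{z}_i)\to E_n)$ as $(\bigwedge_i H(\vec{z}_i))\to E_n$, extract the set $X$ of instances of $px=p(qx)$ with $|X|\le k$, and invoke Statman's theorem. The only cosmetic slip is the appeal to Theorem~\ref{t:Herbrand}: once you set $k:=\HC(F_n)$, the existence of a length-$k$ Herbrand disjunction is by definition of $\HC$, not by the quantitative bound of that theorem; and you are more explicit than the paper in invoking the normalization lemma before applying Theorem~\ref{t:Statman}, whereas the paper simply asserts the terms are in normal form.
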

\begin{proof}
  Let \(G_n\) be \(\alllamI \to (\forall x. p x = p(q x)) \to E_n\).
  Assume \(\HC(G_n) < 2_n^1/2\), then there exists \(N < 2_n^1 / 2\)
  and a Herbrand disjunction \(\bigvee_{i<N}(\lamI(\vec{t}_i) \to p
  u_i = p(q u_i) \to p q = p(T_n q q))\), namely, \((\bigwedge_{i<N}
  \lamI(\vec{t}_i)) \to (\bigwedge_{i<N} p u_i = p(q u_i)) \to p q =
  p(T_n q q)\), where \eps-free terms \(\vec{t}_i, u_i\) are in normal
  form.  Due to Theorem~\ref{t:Statman}, the number of instances of
  \(p x = p(q x)\) is at least \(2_n^1/2\), hence \(\HC(G_n) \ge 2_n^1/2\).
\end{proof}

The analysis in this section tells us that the lower bound of the
Herbrand complexity is hyperexponential.  Further technical
investigations will lead to a better lower bound, so that the gap between
the upper and lower bounds gets smaller, and also the unneccesity
of the property degree may be clarified.

\section{Conclusion}
\label{s:concl}

We studied the Herbrand complexity for epsilon calculus with the
\eps-equality axiom.  For Herbrand's theorem, which is on the prenex
existential formula, the Herbrand complexity is same as the case
without the \eps-equality formula, because the embedding result does
not rely on the \eps-equality formula.  The formulation of the
\eps-equality formula has to be restricted through the notion of
\eps-matrices, since otherwise as Yukami's trick explains, we would
fail to find the complexity bound.  Our proof of first epsilon theorem
is simpler than the original one by Bernays, as our formulation of the
\eps-equality formula due to the vector notation allows us to get rid
of the notion of closures from the proof.  Using this formulation, we
computed the upper bound of the Herbrand complexity for the first
extended epsilon theorem with the \eps-equality axiom.  While the
hyperexponential part of the result, namely, the height of the tower,
is the same as the case without the \eps-equality axiom, the
exponential part is quadratic with the additional parameters, the
property degree and the maximal arity, rather than the linear in the
case without the \eps-equality axiom.  Employing the original
formulation of the \eps-equality axiom by Bernays and Hilbert, the
parameter for the maximal arity can be got rid of, although the height
of the hyperexponential tower grows faster than the original result.
We also gave a lower bounds analysis which tells us that it has to be
at least hyperexponential, namely, non-elementary.

\paragraph{Future work}

There are open problems for future work.  The notion of property
degree was introduced to compute the upper bounds of the complexity in
Section~\ref{s:extended-epsilon-calculus-complexity-analysis}.  On the
other hand our lower bound analysis does not count the property
degree, hence it is still not clarified whether the property degree is
necessary for complexity analyses or not.  On the other hand, it is
important to explore a better proof representation suitable for
epsilon calculus, because a syntactic complication of \eps-terms is a
real practical obstacle to study epsilon calculus.  Some modern
formalizations of epsilon calculus are known, for example sequent
calculus with function variables by Baaz, Leitsch, and
Lolic~\cite{BaazLeitschLolic18} and another formulation based on
Miller's expansion tree by Aschieri, Hetzl, and
Weller~\cite{AschieriHetzlWeller18}.

\bibliographystyle{alpha}
\bibliography{eps-miyamoto,epsilon}

\end{document}